\documentclass{article}
\usepackage{amsfonts}
\usepackage{a4wide}
\usepackage{float}
\usepackage[utf8]{inputenc}
\usepackage{natbib}
\usepackage{amsmath,amsthm}
\usepackage{xcolor}
\usepackage{appendix}
\usepackage{graphicx}
\usepackage{hyperref}
\usepackage{algorithm}
\usepackage{subcaption}
\usepackage{multirow}
\usepackage{algpseudocode}

\usepackage{amssymb}
\newtheorem{theorem}{Theorem}
\newtheorem{prop}{Proposition}
\newtheorem{aspt}{Assumption}

\newtheorem{thm}{Theorem}

\newtheorem{lemma}[theorem]{Lemma}

\newcommand{\be}{\begin{equation}}
\newcommand{\ee}{\end{equation}}
\newcommand{\bea}{\begin{eqnarray}}
\newcommand{\eea}{\end{eqnarray}}

\newcommand{\Uhat}{\hat{U}}

\newcommand{\Lhat}{\hat{L}}

\newcommand{\E}{\mathbb{E}}
\newcommand{\reals}{\mathbb{R}}
\newcommand{\Vhat}{\hat{V}}

\newcommand{\Vtilde}{\tilde{V}}

\newcommand{\Dhat}{\hat{D}}

\newcommand{\mhat}{\hat{m}}
\newcommand{\vhat}{\hat{v}}
\newcommand{\fhat}{\hat{f}}
\newcommand{\Ahat}{\hat{A}}
\newcommand{\thetahat}{\hat{\theta}}
\newcommand{\lambdahat}{\hat{\lambda}}

\newcommand{\calC}{\mathcal{C}}
\newcommand{\calChat}{\hat{\calC}}

\newcommand{\Xtilde}{\tilde{X}}

\newcommand{\Etilde}{\tilde{E}}
\newcommand{\Ztilde}{\tilde{Z}}
\newcommand{\cov}{\text{cov}}

\newcommand{\lambdatilde}{\tilde{\lambda}}
\newcommand{\Lambdatilde}{\tilde{\Lambda}}

\newcommand{\Xhat}{\hat{X}}
\newcommand{\supp}{Supplementary Material}
\newcommand{\Lambdahat}{\hat{\Lambda}}
\newcommand{\err}[1]{\|#1\|_{2,\infty}}
\newcommand{\tr}{\text{tr}}

\title{Uniform error bound for PCA matrix denoising}
\author{Xin T. Tong,  Wanjie Wang, and Yuguan Wang}
\date{\today}

\begin{document}

\maketitle 

\begin{abstract}
Principal component analysis (PCA) is a simple and popular tool for processing high-dimensional data. We investigate its effectiveness for matrix denoising. 
We consider the clean data are generated from a low-dimensional subspace, but masked by independent high-dimensional sub-Gaussian noises with standard deviation $\sigma$. 
Under the low-rank assumption on the clean data with a mild spectral gap assumption, we prove that the distance between each pair of PCA-denoised data point and the clean data point is uniformly bounded by $O(\sigma \log n)$. 
To illustrate the spectral gap assumption, we show it can be satisfied when the clean data are independently generated with a non-degenerate covariance matrix. 
We then provide a general lower bound for the error of the denoised data matrix, which indicates PCA denoising gives a uniform error bound that is rate-optimal. Furthermore, we examine how the error bound impacts downstream applications such as clustering and manifold learning. Numerical results validate our theoretical findings and reveal the importance of the uniform error.
\end{abstract}
\section{Introduction}\label{sec:model}

In the modern era, data is often referred to as "the new gold". Rich data with rapidly increasing statistical methods present us with powerful tools for extracting valuable information and explaining scientific problems. 
 However, the process of collecting data inevitably introduces noise, which poses a significant challenge. While statistical methods typically exhibit stability in the presence of weak noise, they may struggle to perform well when the noise surpasses the signal present in clean data. This issue becomes particularly pronounced in the realm of high-dimensional data where each dimension of the data point is corrupted by noise. As the number of dimension grows, the overall noise also grows, which further exacerbates the curse of dimensionality  \citep{donoho2000high} when we try to analyze high-dimensional data given a small number of observations.

Consider $n$ observed data points $Z_i = X_i + \xi_i \in \reals^d$, where $X_i$ is the underlying truth and $\xi_i$ is an independent sub-Gaussian noise vector with covariance $\sigma^2 I$, $i \in [n]$. $X_i$, also called a {\it clean data point}, is unknown to us and algorithms are conducted on $Z_i$s. While the observed data $Z_i$ usually fall into a high-dimensional space, we assume the intrinsic dimension of the clean data is low, i.e. it can be embedded into a low-dimensional subspace. 
It is common to assume that $\|X_i\|$'s are bounded by a constant; see \cite{von2008consistency}. Without loss of generality, we assume $\|X_i\| \leq 1$. In practice, this can be achieved by data normalization.

To evaluate the data quality of $Z_i$, we introduce the signal-to-noise ratio (SNR). SNR is a common metric that measures the relative strength of the signal when compared with the noise. In this simple setting, the SNR is given by: 
 \[
\text{SNR}=\max_{i\in [n]}\frac{\|X_i\|^2}{\E[\|\xi_i\|]^2}\leq \frac{1}{\sigma^2 d}.
 \] 
 When the dimension $d$ increases, the SNR deteriorates and tends towards zero. With a low SNR, analyzing data directly based on $Z_i$ will induce unsatisfactory results. Naturally, we seek to denoise $Z_i$ first to improve the accuracy of data analysis. This procedure is known as matrix denoising in the literature (see for example \citep{donoho2014minimax}), and we introduce it in the following section.

\subsection{PCA for denoising}

Suppose the clean data points $X_i$ are distributed in a low-dimensional subspace with a dimension of $r$, where $r \ll d$. A direct idea to recover $X_i$ is to use the singular value decomposition (SVD) of the noisy data matrix formed by $Z_i$. 

We introduce the matrix form for the data points. Let $Z = [Z_1, \ldots, Z_n]^T$, $X = [X_1, \ldots, X_n]^T$ and $E = [\xi_1, \ldots, \xi_n]^T$ be the matrices formed by the noisy data points, clean data points, and pure noise, respectively. 
According to the assumption, $rank(X) = r$. We start from the following hypothetical denoising procedure and then propose the algorithm.
Let the SVD of $X$ be denoted as $X=U\Lambda V^T$, where $U\in \mathbb{R}^{n\times r}$, $\Lambda\in \mathbb{R}^{r\times r}$, and $V\in \mathbb{R}^{d\times r}$. The columns of $V$ span the $r$-dimensional subspace in which $X$ lies, 
and $VV^T$ is the projection operator onto this subspace. By applying this projection operator to $Z_i$, we obtain:
\[
VV^T Z_i=X_i+ VV^T\xi_i,\quad \text{where }\E[\|VV^T\xi_i\|^2]=\sigma^2\text{tr}(VV^T)=r\sigma^2.
\]
Thus, when the underlying dimension $r = O(1)$ and $r \ll d$, 
the noise in the projection $VV^T Z_i$ is significantly weaker than the noise in $Z_i$. 
the associated SNR of the projected data $VV^TZ_i$ is of order ${1}/{(\sigma^2r)}$, which can be sufficiently strong to yield accurate inference results. Therefore, by leveraging the SVD and performing the projection onto the low-dimensional subspace, we can effectively denoise the data and obtain accurate estimates.

In practice, there is no access to $V$. Therefore, we estimate it using the SVD of the noisy data matrix $Z$. The SVD of $Z$ can be expressed as $Z=\Uhat \Lambdahat \Vhat^T$, where typically $Z$ has full rank due to the presence of noise. To focus on the most significant components of the data, we select only the first $r$ columns of $\Vhat$, denoted as $\Vhat_r$, corresponding to the largest $r$ singular values. 
By $\Vhat_r$, we project the noisy data points $Z_i$ onto the estimated subspace, resulting in the denoised estimates $\Xhat_i$. The projection is given by:
\[
\Xhat_i=\Vhat_r\Vhat_r^T Z_i,\quad i\in [n].  
\]
The columns of $\Vhat_r$ can be interpreted as the $r$ directions that capture the most variability of the data points $Z_i$. Therefore, they are often referred to as the principal directions, and the resulting $\Xhat_i$ are known as the principal components. This approach is commonly known as principal component analysis (PCA). 
We call it the PCA-denoising algorithm,  presented in Table \ref{alg:denoise}. 

Utilizing PCA for noise reduction is not a new concept. It was first introduced in multivariate statistical analysis and then explored in various fields. 
For example, 
\cite{shepard1962analysis} introduced the use of PCA for multidimensional scaling and distance estimation. 
In the field of image processing, \cite{singh1985standardized} applied PCA to denoise images. Discussions in Section \ref{review} provide more details of the related literature and results. We also refer interested readers to surveys and textbooks for more comprehensive lists \citep{jolliffe2005principal,abdi2010principal,chen2021spectral}. 

The denoised data $\Xhat$ can be applied to various applications, such as empirical risk minimization, clustering, manifold learning and so on. The denoising step largely improves the performance of algorithms in these fields. More discussions can be found in our Section \ref{sec:application}. 

\begin{algorithm}
\caption{PCA-denoising}\label{alg:denoise}
\begin{algorithmic}[1]
\Require Data  $Z\in \reals^{n\times d}$, dimension $r$. 
\Ensure Denoised data $\Xhat\in \reals^{n\times d}$.
\State Find the SVD of $Z$ as $Z=\Uhat \Lambdahat \Vhat^T$.
\item Denote the first $r$ columns of $\Vhat$ as $\Vhat_r \in \reals^{d \times r}$. 
\State Let $\Xhat=Z\Vhat_r\Vhat_r^T$. 
\end{algorithmic}
\end{algorithm}



\subsection{Our main interest and contribution}
One crucial question for matrix denoising is assessing the accuracy of $\Xhat_i$, i.e. the distance between the estimate $\Xhat_i$ and the clean data $X_i$ for all $i \in [n]$. 
Most existing theoretical analysis of PCA focuses on the Frobenius distance between the two matrices $\Xhat$ and $X$,
which is the squared root of the sum of squared $\ell_2$ error among all data points.
Our goal is to obtain a uniform error bound across all data points, which allows for individual statistical analysis on each sample. 
Specifically, we aim to establish the following $\ell_2\to \ell_\infty$ or uniform error bound for the PCA-based denoising algorithm,
\begin{equation}
\label{eqn:uniform}
\|\hat{X} - X\|_{2,\infty} := \max_{a \in \reals^d, a \neq 0}\frac{\|(\hat{X} - X)a\|_{\infty}}{\|a\|_2}=\max_{i\in [n]} \|X_i-\Xhat_i\|=O(\sigma\log n)
\end{equation}
with high probability, when $d = c n$ with $c$ and $\sigma$ being some absolute constants.
Here the notation $O(\cdot)$  hides a factor that may depend on the low dimension $r$.
The equivalence between $\err{\Xhat-X}$ and $\max_{i\in [n]}\|X_i-\Xhat_i\|$ can  be found in \cite{twoinifinity1, twoinifinity2}, which provides further theoretical insights into our results.

To understand our goal \eqref{eqn:uniform}, we consider a special low-dimensional case where the observed data $Z_i$ also has a dimension of $r$, i.e., $d = r$. In this case, the noise $E_i$ has a low dimension of $r$ and $\|E\| = O(\sigma^2r)$
is small. Therefore, the dimension reduction is not necessary and we can directly use the observed data $Z_i$ as an estimate of the clean data $X_i$. By a union bound, we can find the estimation error is  $\|Z-X\|_{2,\infty} = \|E\|_{2,\infty} =O(\sigma\log n)$, the same as \eqref{eqn:uniform}. Therefore, our goal \eqref{eqn:uniform} implies that the PCA-denoising estimates $\Xhat_i$ achieve the same level of accuracy as in the low-dimensional case.
In other words, the PCA-denoising step essentially removes the curse of dimensionality.

In this paper, we explore the estimate \eqref{eqn:uniform} from several perspectives. Here is a summary of our main findings and the organisation of this paper:
\begin{enumerate}
    \item In Section \ref{sec:unif}, Theorem \ref{thm:linftydavis} establishes a general form of \eqref{eqn:uniform} for arbitrary $d$ and $\sigma$ under the condition $\|X_i\| \leq 1$. When assumptions $n\asymp d$ and the $r$-th largest singular value of $X$ that $\lambda_r(X) \geq c_X \sqrt{n}$ holds for an absolute constant $c_X$, we show that the estimate \eqref{eqn:uniform} holds exactly. 
    Our result does not impose any assumptions on the correlation structure of clean data $X_i$.
    \item In Section \ref{sec:eigenvalue}, we investigate the sufficient conditions that the assumption $\lambda_r(X)\geq c_X\sqrt{n}$ holds. By the random matrix theory, we demonstrate that the covariance matrix of $X$ with a non-zero $r$-th eigenvalue will suffice. 
    We illustrate this assumption by a zigzag line example, which is motivated by temporal and spatial data sets.
    \item Section \ref{sec:lowerbound} presents a general lower bound on the signal-to-noise ratio and sample size $n$ to ensure that the average error is no larger than any constant $\epsilon > 0$. The lower bound highlights that PCA-denoising has the rate-optimal signal-to-noise ratio and sample size requirement.

    \item In Section \ref{sec:application}, we demonstrate the practical implications of the uniform error bound $\err{\Xhat - X}$ in various downstream applications. Assuming $\err{\Xhat - X} \leq \epsilon$, we provide performance guarantees for applications such as clustering and manifold learning.
    \item Finally, in Section \ref{sec:simulation}, we provide some numerical simulations to support our theoretical findings. We consider a clustering task on high-dimensional data sampled from two separated zigzag lines. 
    For this data, PCA-denoising yields the uniform error bound \eqref{eqn:uniform}, and the denoised data enables efficient spectral clustering. We also show that data with a small "average error" is not sufficient to guarantee good clustering results for every sample in this task. 
    \end{enumerate}


\subsection{Related literature}\label{review}
Our findings reside at the intersection of PCA and matrix denoising, where plenty of related results exist in the literature. In this section, we will provide a brief overview of the relevant literature from the perspectives of PCA and matrix denoising. For ease of discussion, we would assume $n\asymp d$ in this section, which is a common setting for many works in the discussion. 

\subsubsection{Comparing with PCA literature}\label{subsec:PCA}
Due to the wide range of applications for PCA, there is numerous of literature on its design and applications \citep{abdi2010principal}.
The earliest works can be traced back to the 1960s \citep{rao1964use,jolliffe1972discarding}, where the discussions focus on multivariate statistical analysis. 
However, a rigorous understanding of PCA in high-dimensional settings emerged much later, mostly in the last 15 years. 
In the theoretical analysis of PCA, most studies have focused on the accuracy of subspace recovery, 
i.e. $\|\Vhat_r\Vhat^T_r -VV^T\|_*$. 
Here $\|\,\cdot\,\|_*$ is some operator norm, where $\ell_2$-operator norm is used in most classical settings. 
Denote $\mathcal{E}_V = \Vhat_r\Vhat^T_r -VV^T$ for short.
Using the eigenvector perturbation results like \cite{daviskahan,wedin1972perturbation,stewart1990matrix,bhatia2013matrix,yu2015useful}, and random matrix theory, 
an upper bound of the form $\|\mathcal{E}_V\|^2 \leq O(\sigma^2 )$ can often be obtained, with additional conditions on $X$ \citep{vu2011singular,chen2021spectral}.   

In particular, \cite{johnstone2009consistency} discussed the consistency of PCA recovery when the underlying dimension $r=1$ and the single principal component is sparse. Assuming the existence of a spectral gap, \cite{cai2018rate} discussed the optimality of PCA recovery, in the context of the $\ell_2$ operator norm $\|\mathcal{E}_V\|$ and Frobenius norm $\|\mathcal{E}_V\|_F$. It is worth mentioning that $r$ can grow as $O(n)$ in this work. In \cite{zhang2022heteroskedastic}, the results were extended to the setting where the data distribution is heteroskedastic. Other than $\mathcal{E}_V$, $V_r$ is also discussed in the literature. \cite{ding2020high} studied the limiting distribution of $\Vhat_r$. 
\cite{fan2018eigenvector} and \cite{abbe2020entrywise} have shown the  $\ell_\infty$ error of estimated eigenvectors in $\Vhat_r$ is $O(1/n)$, assuming the true eigenvectors in $V$ have $\ell_\infty$ norm being $O(1/\sqrt{n})$.   
\cite{twoinifinity2} studied the distribution of $\Vhat_r-VO$, where $O$ is an orthogonal matrix, and establishes an error bound in $\err{\cdot}$ norm.  In \cite{AveError}, the error is studied when $VV^T$ is replaced by a rank-$r$ projection $P$ that minimizes $\mathbb{E}\|Z - PZ\|^2$.  

The model setting of this paper follows a similar line as these works. In particular, we assume the existence of a spectral gap of $X$, which enables the spectral perturbation analysis. Such a spectral gap is necessary in the existing works discussed above. One of the key steps in the proof, Proposition \ref{prop:leave1}, can be interpreted as an application of the leave-one-out method discussed in \cite{chen2021spectral}. The difference between this paper and these works mainly lies in the errors of interest, as explained below.

Using the upper bound on $\|\mathcal{E}_V\|$, we can straightforwardly obtain the denoising accuracy of PCA for a new data point $Z_{n+1} = x + \xi_{n+1}$, which is often referred as the \emph{test error} in statistics \citep{hastie2009elements}. In particular, we can define and calculate
\begin{align*}
\text{Test}_{\Vhat}&:=\max_{x\in \text{span}(V),\|x\|=1} \E_{\xi}[\|\Vhat_r\Vhat_r^T(x+\xi_{n+1})-x\|^2]\\
&=\max_{x\in \text{span}(V),\|x\|=1} \E_{\xi}[\|\mathcal{E}_Vx\|^2+\|\Vhat_r\Vhat_r^T\xi_{n+1}\|^2]=O(\sigma^2).
\end{align*}
This noise level is the same as our goal in \eqref{eqn:uniform}, if the $\log n$ factor is disregarded.
However, in the proof, the independence between $\xi_{n+1}$ and $\Vhat_r$ is required to obtain the bound on $\|\Vhat_r\Vhat^T_r\xi_{n+1}\|^2$. Without this independence, such as in the calculation of  $\|\Vhat_r\Vhat^T_r\xi_k\|^2$ for $k \in [n]$,  this bound does not hold.  In other words, bounding $\|\mathcal{E}_V\|$ does not directly lead to the desired bound \eqref{eqn:uniform}.

While Test$_{\Vhat}$ is already useful in many situations, it imposes limitations in practical applications. To ensure the independence condition between $\Vhat_r$ and the data points to be projected, one has to split the samples into two sets: one training data set to obtain the projection $\Vhat_r\Vhat_r^T$,  and another set where this projection is applied. Test$_{\Vhat}$ cannot be applied if the projection is on the training data set. This data-splitting approach leads to two problems: 1) It reduces the sample size, resulting in a loss of estimation accuracy, which is undesirable when the original dataset has a limited number of samples. 2) In many unsupervised learning tasks, statistical inference on the training data set itself is crucial. For example, we want to classify all data points in the clustering problem.  
Yet the clustering error on the training data cannot be evaluated using Test$_{\Vhat}$. 

In contrast, a uniform bound of the form \eqref{eqn:uniform} guarantees the result without data splitting. We can obtain $\Vhat_r$ from all samples and apply the projection to all samples, with a uniform error bound. It is more accurate with a larger sample size and allows us to carry out unsupervised operations on all the samples.

\subsubsection{Comparing with matrix denoising literature}
In the matrix denoising literature, the main interest is to find an estimate $\Xhat(Z)$ so that the Frobenius norm $\|\Xhat(Z) - X\|_F$ can be well bounded. Various approaches have been introduced to tackle this problem. \cite{donoho2014minimax} considered using the minimizer of a regularized loss function $\|Z-\Xhat\|^2_F+\lambda \|\Xhat\|_*$, where $\|\Xhat\|_*$ is the nuclear norm of $\Xhat$. The approach has been proved to have a rate-optimal mean squared error in the Frobenius norm. The shrinkage method instead considers estimators of the form $\Xhat=\Uhat g(\Lambdahat)\Vhat^T$, where $g(\Lambdahat)$ is a diagonal matrix with diagonal entries being $g(\lambdahat_i)$. The main interest is to find a good choice of $g$. Assuming the distribution of $X$ is known, \cite{nadakuditi2014optshrink} provided the optimal $g$ for the Frobenius norm. When $X$ is a spiked covariance matrix, most eigenvalues of the estimator $\hat{X}$ are 1.  
Focusing on the spectrum of $\Xhat - X$, which has very few non-zeros,  
\cite{donoho2018optimal} provided the optimal $g$ under the $\ell_p$ norm of the spectrum for any general $p$.
\cite{leeb2021matrix} extended this discussion to more applied scenarios, where some data can be missing or the singular vectors are sparse.  
Under the assumption that $X$ is low-rank and rotation invariant (or the singular vectors are sparse), \cite{ding2020high} 
 designed a stepwise SVD algorithm that could estimate the singular vectors by going through the data sequentially.  \cite{montanari2018adapting}  discussed the scenario where the noise is from an unknown distribution and suggested a kernelized estimator to do shrinkage.

Compared with these methods, the PCA-denoising approach can be seen as a simple shrinkage method, with $g(\lambda_k)=1_{k\leq r}\lambda_k$. It is much simpler and widely adopted in practice, with few requirements on the clean data $X$. Therefore, the theoretical analysis of it is worth special interest. For example,  \cite{cai2018rate} has discussed the implication of PCA subspace recovery on matrix denoising, and we compared with it in Section \ref{subsec:PCA}.

All existing analyses of matrix denoising focus on bounding the error in the Frobenius norm (or $\ell_p$ generalization of it; see \cite{donoho2018optimal}) which can also be seen as the average mean square error over all data points:
\begin{equation}
\label{eqn:avgerror}
    \frac{1}{n}\sum_{i=1}^n\|X_i-\Xhat_i\|^2=\frac{1}{n}\|X-\Xhat\|_F^2=O(\sigma^2).
\end{equation}
This average error is at the same order as the uniform error bound in \eqref{eqn:uniform} up to a logarithm term. 
But the error bound of form $\|\Xhat-X\|_{2,\infty}$ provides a stronger mathematical guarantee as it captures the distribution of errors across all data points, while the error bound in Frobenius norm allows for some outliers with large individual errors. 
This distinction becomes crucial when analyzing the estimates on individual data points and nonlinear statistical models. 

For instance, in the context of clustering, using the Frobenius error bound would only allow us to establish that the proportion of incorrectly classified data points tends to zero, i.e., the error rate goes to zero. However, it does not provide insights into the exact number of errors, which is often referred to as "strong consistency" in recent statistical literature \citep{abbe2015community, fan2018eigenvector, hu2023network}. On the other hand, a uniform error bound enables us to establish such strong consistency results, as demonstrated by Corollary \ref{cor:cluster} in Section \ref{sec:clustering}.

Such outliers with big errors may also cause challenges for the statistical models with nonlinear and local dependency on the data input, such as K-nearest-neighbor and neural networks. In Section \ref{sec:manifold}, we demonstrate the error of manifold learning using our new uniform error bound, which extends the results in \cite{von2008consistency} to the settings where data is corrupted by high-dimensional noise. In Section \ref{sec:simulation}, we provide numerical simulations on graphical Laplacian spectral clustering. There, we show that PCA-denoising leads to stable clustering results, while data with the same Frobenius error fails to guarantee satisfactory clustering outcomes. 

The analysis technique of this paper differs from these works on matrix denoising, as it is closer to the techniques used in PCA analyses. This is reflected by the assumptions we made for $X$. In particular, our result requires the $r$-th singular value of $X$ to scale like $\sqrt{n}$. Such spectral gap requirement is not needed in \cite{donoho2014minimax} and \cite{donoho2018optimal}. \cite{nadakuditi2014optshrink} and \cite{bao2021singular} assumed the existence of a spectral gap, but its characterization is sharper than the one used in this paper. On the other hand, such an assumption allows this paper and \cite{cai2018rate} to discuss general cases where $d$ and $n$ diverge at different speeds, while most matrix denoising works focus on the scenario $n\asymp d$.

\subsection{Notations}
For any matrix $A$, we use $A_i$ to denote the $i$-th row vector of $A$ and $A_{i,j}$ to denote the $(i,j)$-th entry. Denote $\lambda_k(A)$ the $k$-th largest singular value of $A$. Denote $I_k$ to be the $k \times k$ identity matrix. We denote the $\ell_2$-operator norm of $A$ as $\|A\|$ and the Frobenius norm of $A$ as $\|A\|_F$. 

To generalize our discussion to other noise types, we introduce the notion of sub-Gaussian random variables. We say a random variable  $Z$ is sub-Gaussian$(0,\sigma^2,K_{\psi_2})$ if
\[
\E[Z]=0,\quad \E[Z^2]=\sigma^2,\quad K_{\psi_2} \geq \sup\nolimits_{p\geq 1}\E[|Z/\sigma|^p]^{1/p}. 
\]
Given a constant $C$, we say $C$ is absolute if it does not depend on any other constants. We write $C=C(x,y)$ if it only depends on constants $x$ and $y$.
In this paper, without further statements, we fix the sub-Gaussian parameter $K_{\psi_2}$ and the rank $r$ as absolute constants.
For two series $a_n$ and $b_n$, we say $a_n \lesssim b_n$ or $a_n=O(b_n)$ if there is a constant $C=C(r,K_{\psi_2})$ so that $\limsup_{n \to \infty} a_n/b_n \leq C$.
Similarly, we have $a_n \gtrsim b_n$.
We say $a_n \asymp b_n$ if there is a constant $C$, such that $a_n \leq C b_n$ and $b_n \leq C a_n$ when $n$ is large enough.  Finally, we use the notation $[N]:=\{1,\ldots,N\}$ for any integer $N$.

\section{Performance bounds for PCA denoising}

\subsection{Uniform bounds for PCA-denoising}\label{sec:unif}
In this section, we establish the upper bound for the uniform error $\err{\Xhat - X}$, where $\Xhat$ is obtained from Algorithm \ref{alg:denoise}. For notational simplicity, we write $\lambda_r = \lambda_r(X)$, which is the $r$-th largest singular value of $X$. 

\begin{thm}
\label{thm:linftydavis}
 Suppose $X\in \reals^{n\times d}$ has rank $r$, where each row  $X^T_i$ is bounded by $\|X_i\|\leq 1$. Suppose $E\in \reals^{n\times d}$ has independent entries being sub-Gaussian$(0,\sigma^2,K_{\psi_2})$ distributed with $\sigma\leq 1$. Let $\Xhat$ be the denoised data matrix from Algorithm \ref{alg:denoise}. 
Then there are constants $n_0, C_1=C_1(K_{\psi_2})$ and $C_2= C_2(K_{\psi_2}, r)$, so that if
\begin{equation}
\label{eqn:specgap}
n>n_0,\quad \lambda_r>1+C_1\sigma (\sqrt{n}+\sqrt{d}),    
\end{equation}
 the following holds with probability $1 - O(1/n)$, 
\begin{align*}
\err{\Xhat - X} &\leq
C_2\min\left\{\frac{\sigma(\sqrt{n}+\sqrt{d})}{\lambda_r}+\frac{n\sigma \sqrt{\log n}}{\lambda_r^2},\frac{\sigma}{\lambda_r^2} (\sqrt{nd}+n\sqrt{\log n})\right\}\\
&\quad +\frac{C_2\sigma}{\lambda_r^4}\sqrt{\log n}(n+\sigma^2n\sqrt{n}+\sigma^2 d\sqrt{n})(1+\sigma\sqrt{\log n})(1+\sigma(\sqrt{d}+\sqrt{\log n})).
\end{align*}
Further, if  $\lambda_r \geq c_X\sqrt{n}$ holds for an absolute constant $c_X > 0$, then the bound can be shortened as below for another $C_2=C_2(K_{\psi_2},r, c_X)$ 
\[
\err{\Xhat - X} \leq \begin{cases} C_2\cdot\left(\sqrt{\frac{d}{n}}\sigma(1+\frac{d}{n}\sigma^3\sqrt{\log n}+\frac{d}{n}\sigma^4 \log n)+\sqrt{\log n}\sigma\right),\quad &d\gtrsim n;\\
C_2\sqrt{\log n}\sigma(1+\sigma^4 \sqrt{\frac{d\log n}{n}}),\quad &d\lesssim n.
\end{cases}
\]
If we further assume $c_d=d/n$ and  $\sigma$ are absolute constants, then the bound can be shortened as below for another $C_2=C_2(K_{\psi_2},r, c_d, \sigma)$
\[
\err{\Xhat - X} \leq C_2\sigma \log n. 
\]
\end{thm}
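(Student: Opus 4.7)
The starting point is the exact identity
\begin{equation*}
\Xhat - X = (X+E)\Vhat_r\Vhat_r^T - X = X\bigl(\Vhat_r\Vhat_r^T - VV^T\bigr) + E\Vhat_r\Vhat_r^T,
\end{equation*}
which uses $XVV^T = X$ since each row $X_i$ lies in $\mathrm{span}(V)$. Taking $\err{\cdot}$ row by row and using $\|X_i\|\le 1$ yields
\begin{equation*}
\err{\Xhat-X} \le \|\Vhat_r\Vhat_r^T - VV^T\| + \max_{i\in[n]}\|\Vhat_r\Vhat_r^T\xi_i\|.
\end{equation*}
The subspace term is controlled by Davis--Kahan/Wedin: under the spectral gap \eqref{eqn:specgap} together with the standard sub-Gaussian operator bound $\|E\|\lesssim \sigma(\sqrt n+\sqrt d)$ (holding with probability $1-O(1/n)$), I obtain $\|\Vhat_r\Vhat_r^T-VV^T\|\lesssim \sigma(\sqrt n+\sqrt d)/\lambda_r$, which produces the first summand of the theorem.

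The genuinely delicate piece is $\max_i\|\Vhat_r\Vhat_r^T\xi_i\|$, because $\Vhat_r$ depends on $\xi_i$. I would apply a leave-one-out argument: let $Z^{(i)}$ be $Z$ with its $i$-th row replaced by the noiseless $X_i^T$, let $\Vhat_r^{(i)}$ be the top-$r$ right singular vectors of $Z^{(i)}$, and decompose
\begin{equation*}
\Vhat_r\Vhat_r^T\xi_i = \Vhat_r^{(i)}(\Vhat_r^{(i)})^T\xi_i + \bigl(\Vhat_r\Vhat_r^T-\Vhat_r^{(i)}(\Vhat_r^{(i)})^T\bigr)\xi_i.
\end{equation*}
By construction $\Vhat_r^{(i)}$ is independent of $\xi_i$, so $(\Vhat_r^{(i)})^T\xi_i$ is an $r$-dimensional sub-Gaussian vector; a union bound over $i\in[n]$ then gives $\max_i\|(\Vhat_r^{(i)})^T\xi_i\|\lesssim \sigma\sqrt{\log n}$ with high probability, supplying the target $\sqrt{\log n}$ scaling.

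The main obstacle is the cross term $(\Vhat_r\Vhat_r^T-\Vhat_r^{(i)}(\Vhat_r^{(i)})^T)\xi_i$. The naive Davis--Kahan bound uses $\|Z-Z^{(i)}\|=\|\xi_i\|\lesssim \sigma\sqrt d$ and only delivers the wasteful $\sigma^2 d/\lambda_r$, which never reaches $\sigma\log n$. To sharpen it I would exploit that $Z-Z^{(i)}=e_i\xi_i^T$ is rank one: a first-order resolvent expansion of $\Vhat_r$ around $\Vhat_r^{(i)}$ makes the leading contribution factor through vectors of the form $X^T e_i = V\Lambda U_i$ and $(Z^{(i)})^T e_i = X_i$, whose norms are $O(1)$ rather than $O(\sqrt d)$; the higher-order remainders are then handled by a bootstrap/self-improvement argument in which an a priori $\err{\cdot}$-type bound on $\Vhat_r\Vhat_r^T-\Vhat_r^{(i)}(\Vhat_r^{(i)})^T$ is fed back into the same inequality, producing the $\lambda_r^{-4}$ tail that appears in the theorem. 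Assembling the three contributions yields the general bound, and the simplifications under $\lambda_r\ge c_X\sqrt n$, $d=c_d n$, and absolute $\sigma$ follow by direct algebraic substitution, with the summand of order $(d/n)\sigma^4\log n$ becoming dominant and giving the final $C_2\sigma\log n$.
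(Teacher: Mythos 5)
Your initial decomposition is equivalent to the paper's: writing $\Xhat - X = X(\Vhat_r\Vhat_r^T - VV^T) + E\Vhat_r\Vhat_r^T$ and splitting into the subspace-perturbation term and $\max_i\|\Vhat_r\Vhat_r^T\xi_i\| = \max_i\|\Vhat_r^T\xi_i\|$ is what the paper does in display \eqref{tmp:decomp}, and you correctly flag $\max_i\|\Vhat_r^T\xi_i\|$ as the hard part because $\Vhat_r$ depends on $\xi_i$. You also reach for the right general tool, leave-one-out, and the first piece of your split, $\Vhat_r^{(i)}(\Vhat_r^{(i)})^T\xi_i$ with $\Vhat_r^{(i)}$ independent of $\xi_i$, is handled correctly by Hanson--Wright plus a union bound.

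The gap is in the cross term $(\Vhat_r\Vhat_r^T - \Vhat_r^{(i)}(\Vhat_r^{(i)})^T)\xi_i$, which you propose to control by a first-order resolvent expansion plus an unspecified bootstrap. As sketched this does not close. The perturbation to the Gram matrix is $\Delta_i = Z^TZ - (Z^{(i)})^TZ^{(i)} = X_i\xi_i^T + \xi_i X_i^T + \xi_i\xi_i^T$, and the third piece has operator norm $\|\xi_i\|^2 \asymp \sigma^2 d$; it does not factor through $O(1)$ vectors as you claim. When the resolvent-expansion correction is then hit by $\xi_i$, terms such as $P^{(i)}\xi_i\cdot(\xi_i^T R\,\xi_i)$ and $R\,\xi_i\xi_i^T P^{(i)}\xi_i$ appear (with $R$ the reduced resolvent), and establishing that these stay at the $\sigma\log n$ scale requires careful concentration arguments that you have not supplied; likewise the claim that the higher-order remainders are tamed by a ``bootstrap'' is asserted but not shown to converge or to yield the stated $\lambda_r^{-4}$ term. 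This is precisely the technical core of the theorem.

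The paper takes a cleaner route at exactly this point: it never bounds the projection difference $\Vhat_r\Vhat_r^T - \Vtilde_r\Vtilde_r^T$ at all. It multiplies the eigenvector equation $(X+E)^T(X+E)\vhat_k = \lambdahat_k^2\vhat_k$ by $\xi_i^T$ and rearranges to get the exact scalar identity $(\lambdahat_k^2 - \|\xi_i\|^2)\,\xi_i^T\vhat_k = \xi_i^TS_i\vhat_k + (\text{three low-order terms})$, where $S_i$ is the leave-one-out Gram matrix. Note that the difficult $\|\xi_i\|^2$ has been absorbed into the left-hand side rather than appearing in a perturbation series. Leave-one-out is then used only on $\xi_i^TS_i\vhat_k$, by writing $\vhat_k = \Vtilde_r y_k + (I - \Vtilde_r\Vtilde_r^T)\vhat_k$: the first piece is $O(\sigma\sqrt{\log n})$ by independence, and the correction $\|(I - \Vtilde_r\Vtilde_r^T)\vhat_k\|$ is bounded by a separate deterministic block analysis of the eigenvector equation (Proposition~\ref{prop:leave1}), which is what produces the $\lambda_r^{-4}$ tail. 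Your proposal would need to be replaced or substantially completed along these lines; as it stands the crucial estimate is conjectured rather than proved.
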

Theorem \ref{thm:linftydavis} presents three estimation error bounds because of the trade-off between formula complexity and generality of settings, the first estimate being under the most general setting and the last estimate being under a strict but canonical setting. 
The first error bound allows the key constants $n,\sigma$, and $d$ to be free from each other, which covers most settings. For example, it can cover the case $d \gg n$ in \cite{cai2018rate} and the case $d\sigma\asymp 1$ in \cite{montanari2018adapting}. The error bound can be simplified under specific settings. 
The second error bound is a simplification when $\lambda_r \geq c_X\sqrt{n}$. Compared to the requirement $\lambda_r > 1 + C_1 \sigma(\sqrt{n} + \sqrt{d})$, this is a more strict condition since it avoids the case that $\sigma$ decays when $n$ increases. 
The last estimate achieves our goal in \eqref{eqn:uniform}. To achieve it, we assume $d\asymp n$ and $\sigma\asymp 1$, which is a canonical setting and can be found in for example \cite{donoho2014minimax} and \cite{abbe2020entrywise}.

Theorem \ref{thm:linftydavis} has two assumptions to achieve the uniform bound on the error. The first assumption is the clean data $X$, that $\max_{i\in [n]}\|X_i\|\leq 1$.
Such a condition is common in manifold analysis, such as \cite{von2008consistency}. 
It can be achieved in any data set $X$ by dividing each data point by a large constant $C \geq \max_{i\in [i]}\|X_i\|$. This assumption also makes the discussed error the relative error. The second assumption is the spectral gap \eqref{eqn:specgap}. It is an essential requirement that the signal level in $X$ should be no smaller than the noise level in $E$. 
 Theorem \ref{thm:lowerbound} below will show the necessity of such a requirement. 

Before we provide the formal proof, it might be worth doing a naive one using $\|\Vhat_r\Vhat_r^T-VV^T\|=O(\sigma)$ from the standard Davis--Kahan Theorem. One may attempt to approach the following bound (the authors tried this in the beginning)
\begin{align*}
\|X_i-\Xhat_i\|&=\|VV^TX_i-(\Vhat_r\Vhat^T_r)(X_i+\xi_i)\|\\
&\leq \|(VV^T-\Vhat_r\Vhat^T_r)X_i\|+\|\Vhat_r\Vhat^T_r\xi_i\|=\|\Vhat_r\Vhat^T_r\xi_i\|+ O(\sigma).
\end{align*}
The main issue comes from the term $\|\Vhat_r\Vhat^T_r\xi_i\|$. A simple bound that $\|\Vhat_r\Vhat^T_r\xi_i\|\leq \|\xi_i\|=O(\sigma\sqrt{d})$ is too loose when $d \to \infty$. 
One may notice that the true projection $VV^T$ causes $\|V V^T\xi_i\|\approx \sqrt{\tr(VV^T)}=\sigma\sqrt{r}$ and expect a similar bound for $\|\Vhat_r\Vhat^T_r\xi_i\|$. However, obtaining such a bound is nontrivial because $\xi_i$ is dependent on $\Vhat_r$. This is exactly where the mathematical challenge lies. We introduce a proposition in Section \ref{sec:loo} to solve this problem by the leave-one-out trick.

\begin{proof}
Note that $U\Lambda O=XVO$ holds for any orthonormal matrix $O\in \reals^{d\times d}$. Recall that $\|X_i\|\leq 1, \|\Vhat_r\|\leq 1, \|V\|\leq 1$. Therefore, we have  

\begin{align}\label{tmp:decomp}
\|X_i-\Xhat_i\|&=\|VV^TX_i-(\Vhat_r\Vhat^T_r)(X_i+\xi_i)\|\nonumber\\
&\leq \|\Vhat_r\Vhat^T_r\xi_i\|+\|(VV^T-\Vhat_r\Vhat^T_r)X_i\|\nonumber\\
&\leq \|\Vhat^T_r\xi_i\|+\|(VO-\Vhat_r)\Vhat^T_rX_i\|+
\|VO(O^TV^T-\Vhat^T_r)X_i\|\nonumber\\
&\leq \underbrace{\|\Vhat^T_r\xi_i\|}_{(a)}+2\underbrace{\|VO-\Vhat_r\|}_{(b)}. 
\end{align}

We will discuss each part of the decomposition. We begin with part (b) by the Davis--Kahan Theorem, and then the much more challenging part (a). 

\textbf{Part (b).} 
Consider the self-adjoint extension $\mathcal{E}(X):=\begin{bmatrix}
0, &X\\
X^T, &0
\end{bmatrix}$ of $X$. First note that  if $X=U\Lambda V^T$ is the SVD decomposition, $\mathcal{E}(X)$ has eigenvalue decomposition of form
\begin{equation}
\label{eq:ext}
\mathcal{E}(X)=\begin{bmatrix}
0, &X\\
X^T, &0
\end{bmatrix}=\frac{1}{\sqrt{2}}\begin{bmatrix}
U, &-U\\
V, &V
\end{bmatrix}\begin{bmatrix}
\Lambda, &0\\
0, &-\Lambda
\end{bmatrix}\frac{1}{\sqrt{2}}\begin{bmatrix}
U^T, &V^T\\
-U^T, &V^T
\end{bmatrix}.
\end{equation}
Since $E$ has independent sub-Gaussian rows, Theorem 5.39 of \cite{vershynin} indicates that there is a constant $C_1=C_1(K_{\psi_2})$ so that with probability $1-O(1/n^2)$,  
\[
\|\mathcal{E}(X+E)-\mathcal{E}(X)\|=\|\mathcal{E}(E)\|\leq 2\|E\|\leq C_1\sigma(\sqrt{d} + \sqrt{n}).
\]
Therefore, by the Davis--Kahan Theorem \citep{daviskahan}, there exists an orthogonal matrix $O$ and constant $C_2=C_2(K_{\psi_2},r)$, so that 
\begin{equation}\label{eqn:vsintheta}
    \|\Vhat_r - VO\|\leq
    \left\|\begin{bmatrix}\Uhat_r\\ \Vhat_r\end{bmatrix}  - \begin{bmatrix}UO\\ VO\end{bmatrix}\right\|
    \leq \frac{C_2\sigma(\sqrt{d}+\sqrt{n})}{\lambda_r}.
\end{equation}
An alternative way to bound part (b) is to note that $V$ are the eigenvectors of $X^T X$ and $\Vhat$ are the eigenvectors of $Z^TZ = X^TX + \Delta$, where $\Delta = X^TE+E^TX+E^TE$. Then the difference 
\[
\|\Delta\| \leq \|X^TE+E^TX+E^TE\| \leq 2\|X^TE\| + \|E^TE\|. 
\]
We bound the right-hand side. 
Note that $X^T E = V\Lambda U^T E$, where each column of $U^T E/\sigma \in \reals^{r \times d}$ is an independent, centered, and isotropic sub-Gaussian vector. By Theorem 5.39 of  \cite{vershynin}, with probability at least $1 - O(1/n^2)$, there are updated $C_1(K_{\psi_2})$ and $C_2(K_{\psi_2},r)$, so that 
$\|U^TE\| \leq \sigma(\sqrt{d} + C_1\sqrt{r} + \sqrt{\log n})$, and 
\[
\|X^T E\| \leq \|\Lambda\|\|U^TE\| \leq \frac13\sigma\sqrt{n} C_2(\sqrt{d}+\sqrt{\log n}).
\]
Again, the same theorem can be applied to $E$, which shows that with probability at least $1 - O(1/n^2)$, 
\[
\|E^TE\| \leq C_1^2\sigma^2(\sqrt{n}+\sqrt{d}+\sqrt{2\log n})^2
\leq 4C_1^2\sigma^2(\sqrt{n}+\sqrt{d})^2.
\]
Combine them. It follows that we can update constant $C_2(K_{\psi_2},r)$ so that 
\begin{align*}
\|\Delta\| &\leq \tfrac{2}{3}C_2(\sigma\sqrt{n d}+\sigma\sqrt{n\log n})+4C_1^2\sigma^2(n+2\sqrt{nd}+d)\\
&\leq \tfrac23C\sigma (\sqrt{nd}+\sqrt{n\log n}+\sigma n+\sigma \sqrt{nd}+\sigma d)\\
&\leq C_2\sigma (\sqrt{nd}+\sqrt{n\log n}+\sigma n).
\end{align*}
The last step is achieved because we assume $\sigma \sqrt{d}\leq \lambda_r\leq \|X\|=\sqrt{n}$. 
Therefore, by the Davis--Kahan Theorem \citep{daviskahan}, there exists an orthogonal matrix $O$ and updated $C_2(K_{\psi_2},r)$, so that 
\begin{equation}\label{eqn:vsintheta2}
    \|\Vhat_r - VO\|\leq
    \frac{\|\Delta\|}{\lambda_r^2}
    \leq \frac{C_2\sigma}{\lambda_r^2} (\sqrt{nd}+\sqrt{n\log n}+\sigma n).
\end{equation}
Since \eqref{eqn:vsintheta} and \eqref{eqn:vsintheta2}  both hold, it suffices to use the minimum of them as the upper bound of part (b). 

\textbf{Part (a).} 
$\|\xi_i^T\Vhat_r\|$ is the most challenging part because of the correlation between $\xi_i$ and $\Vhat_r$. If we ignore the correlation and bound it by $\|\xi_i\|\|\Vhat_r-VO\|$, then the bound is of order $\sigma^2\sqrt{d}$. For high-dimensional data, this $d$ can be very large. Therefore, we need a delicate analysis of this term.

First, since the rows of $E$ are independent sub-Gaussian distributed, 
by Theorem 5.39 in \cite{vershynin}, there is a constant $C_1=C_1(K_{\psi_2})$, so that with probability at least $1 - O(1/n^3)$, 
\[
|X_i^T\xi_i|\leq \sigma (1+\tfrac16C_1(1+\sqrt{\log n}))
\leq \frac13C_1\sigma (1+\sqrt{\log n}),
\]
\[
\|\xi_i\|\leq \sigma (\sqrt{d}+\tfrac16C_1(1+\sqrt{\log n}))
\leq \frac13C_1\sigma (\sqrt{d}+\sqrt{\log n}),
\]
\[
\|E\| \leq \sigma(\sqrt{d} + \tfrac16C_1(\sqrt{n} + \sqrt{\log n}))
\leq \frac{1}{3}C_1\sigma(\sqrt{d} +\sqrt{n}).
\]
We will define the event that all these hold as $\mathcal{A}$. The following discussions are conducted when $\mathcal{A}$ happens. 
Let $\lambdahat_k = \lambda_k(Z)$ be the $k$-th largest singular value of $Z = X+E$ and $\vhat_k$ be the corresponding right singular vector in $\Vhat$. By Weyl's inequality, the singular values follow that, for $k \leq r$, 
\begin{equation}\label{eqn:looeigen2}
\sqrt{n}+\frac13C_1\sigma(\sqrt{n}+\sqrt{d})\geq \lambdahat_k\geq \lambda_r - \frac13C_1\sigma(\sqrt{n}+\sqrt{d}), \quad \lambdahat_{r+1} \leq \frac13C_1\sigma(\sqrt{n}+\sqrt{d}).    
\end{equation}

Consider the $k$-th entry of $\xi_i^T\Vhat_r \in \reals^r$. 
The $k$-th entry of $\xi_i^T \Vhat_r$ is $\xi_i^T \vhat_k$. By definition of singular vectors, we have 
\[
(X+E)^T(X+E)\vhat_k = \lambdahat_k^2 \vhat_k.
\]
Multiply $\xi_i^T$ to both sides, and we have 
\begin{align*}
\lambdahat_k^2\xi_i^T \vhat_k&=\xi_i^T (X+E)^T(X+E)\vhat_k\\
&=\xi_i^T\bigl[(X_i+\xi_i)(X_i+\xi_i)^T + \sum_{j\neq i}(X_j+\xi_j)(X_j+\xi_j)^T\bigr]\vhat_k\\
&=\xi_i^TX_iX_i^T \vhat_k+
\xi_i^TX_i\xi_i^T \vhat_k+\xi_i^T\xi_iX_i^T \vhat_k +\|\xi_i\|^2\xi_i^T\vhat_k + \xi_i^TS_i \vhat_k,
\end{align*}
where $S_i=\sum_{j\neq i}(X_j+\xi_j)(X_j+\xi_j)^T$.
We rewrite this as 
\begin{equation}\label{eqn:xiivj}
(\lambdahat_k^2-\|\xi_i\|^2)\xi_i^T \vhat_k=\xi_i^TS_i \vhat_k+\xi_i^TX_iX_i^T \vhat_k+
\xi_i^TX_i\xi_i^T \vhat_k+\xi_i^T\xi_iX_i^T \vhat_k. 
\end{equation}
Now we analyze the terms on the right-hand side of \eqref{eqn:xiivj}. 

The first term $\xi_i^TS_i \vhat_k$ is the most challenging part since $S_i$ is dependent on $\vhat_k$. Note that $S_i$ is the version of $(X+E)^T(X+E)$ when sample $i$ is deleted. 
Denote the eigenvalue decomposition of $S_i = \Vtilde\tilde{\Lambda}^2\Vtilde^T$. Let $\Vtilde_r$ be the first $r$ columns of $\Vtilde$. Proposition \ref{prop:leave1} in below states that if $y_k=\Vtilde_r^T\vhat_k\in \reals^r$, with probability $1-O(1/n^2)$
\begin{equation}\label{eqn:propvk}
\|\vhat_k-\Vtilde_ry_k\|=\|(I-\Vtilde_r \Vtilde_r^T)\vhat_k\|\leq \frac{C_2}{\lambda_r^2}(1+\sigma\sqrt{\log n})(1+\sigma(\sqrt{d}+\sqrt{\log n})). 
\end{equation}
We decompose $\vhat_k$ into $\Vtilde_r y_k$ and the remainder, and then it follows  
\[
\xi_i^TS_i \vhat_k=
\xi_i^TS_i \Vtilde_ry_k+
\xi_i^TS_i(\vhat_k-\Vtilde_ry_k)
=\xi_i^T\Vtilde_r\Lambdatilde_r^2y_k+
\xi_i^TS_i(\vhat_k-\Vtilde_ry_k).
\]
Since $\Vtilde$ is based on $S_i$ where sample $i$ is deleted, so $\xi_i$ is independent with $\Vtilde_r$. Further, recall $\xi_i$ is  sub-Gaussian,
using Hanson--Wright inequality (Theorem 1.1 in \cite{rudelson2013hanson}), there is an absolute constant $c>0$ so that 
\[
P(|\|\Vtilde_r^T\xi_i\|^2-\sigma^2r|>\sigma^2 t)\leq 2\exp\left(-c\min(\frac{t^2}{K_{\psi_2}^4r}, \frac{t}{K_{\psi_2}^2})\right),
\]
We pick $t=\sqrt{r}K_{\psi_2}^2\log n/\min\{c,\sqrt{c}\}$ then the probability will be less than $O(1/n^2)$ for sufficiently large $n$. In other words, we can update $C_2=C_2(K_{\psi_2},r)$
\begin{equation}\label{eqn:xivo}
\|\xi_i^T\Vtilde_r\|\leq \sigma C_2(1+\sqrt{\log n}), 
 \end{equation}
with $1-O(1/n^2)$ probability. We assume \eqref{eqn:xivo} takes place in the following discussion. 
Then consider $\Lambdatilde_r^2$, which contains the eigenvalues of $S_i$. Because $S_i=(X+E)^T(I-e_i e_i^T)(X+E)$, its eigenvalues are dominated by the ones of $(X+E)^T(X+E)$. So  with probability $1 - O(1/n^2)$,  the eigenvalues of $\Lambdatilde_r^2$ satisfy the following with some updated $C_1(K_{\psi_2})$ using \eqref{eqn:looeigen2},
\begin{equation}\label{eqn:tildelambda}
\lambdatilde_k \leq \left\{\begin{array}{ll}
\lambdahat_1^2\leq C_1 n & k\leq r;\\
 \lambdahat_{r+1}^2\leq C_1(\sqrt{n}+\sqrt{d})^2\sigma^2,& k>r.
\end{array}
\right.
\end{equation}
Combine \eqref{eqn:xivo}, \eqref{eqn:tildelambda} and $\|y_k\| \leq 1$,  we can update $C_1(K_{\psi_2})$ so that
\begin{equation}\label{eqn:term1}
  \|\xi_i^T\Vtilde_r\Lambdatilde^2_ry_k\|
\leq \|\xi_i^T\Vtilde_r\|\|S_i\|\leq C_1\sigma n\sqrt{\log n}.  
\end{equation}
Now consider the term $\xi_i^T S_i(\vhat_kk - \Vtilde_r y_k)$. 
Since $S_i$ and $\xi_i$ are independent, using Hanson--Wright inequality (Theorem 1.1 in \cite{rudelson2013hanson}), there is an updated $C_1(K_{\psi_2})$ so that with probability $1 - O(1/n^2)$, 
\begin{eqnarray*}
\|\xi_i^TS_i\|^2 & \leq & \sigma^2 \tr(S_i^2) + C_1 \sigma^2(\|S_i^2\|_F + \|S_i^2\|)\log n \\
& \leq & C_1\sigma^2(\tr(S_i^2) + \|S_i^2\|_F)\log n.
\end{eqnarray*}
The term $\|S_i^2\| = \lambda_1(S_i^2) \leq \tr(S_i^2)$, so it is ignored. Now recall that $\tr(S_i^2) = \sum_{i=1}^d \lambdatilde_i^2$ and $\|S_i^2\|_F = (\tr(S_i^4))^{1/2} = (\sum_{i=1}^d \lambdatilde_i^4)^{1/2}$. Combining it with the results about $\lambdatilde_i$ in \eqref{eqn:tildelambda}, we can update $C_2$ so that 
\begin{eqnarray}\label{eqn:xisi}
\|\xi_i^TS_i\|
&\leq&  C_1\sigma \sqrt{\sum\nolimits_{k=1}^d \lambdatilde_k^2 + \sqrt{\sum\nolimits_{k=1}^d \lambdatilde_k^4}}\sqrt{\log n}\nonumber\\
&\leq& 2C_1\sigma \sqrt{\sum\nolimits_{k=1}^d \lambdatilde_k^2}\sqrt{\log n}\nonumber\\
&\leq& \frac12C_2\sigma \sqrt{n^2+(\sqrt{n}+\sqrt{d})^4\sigma^4n} \sqrt{\log n}\nonumber\\
& \leq & C_2\sigma \sqrt{\log n}(n+\sigma^2 n\sqrt{n}+\sigma^2 d\sqrt{n}).
\end{eqnarray}
Combining it with \eqref{eqn:propvk} from Proposition \ref{prop:leave1}, we have an updated $C_2(K_{\psi_2},r)$, so that with probability $1 - O(1/n^2)$,  
\[
\|\xi_i^TS_i(\vhat_k-\Vtilde_ry_k)\|
\leq \Gamma(1+\sigma(\sqrt{d}+\sqrt{\log n})),\quad\Gamma:=\frac{C\sigma}{\lambda_r^2}\sqrt{\log n}(n+\sigma^2n\sqrt{n}+\sigma^2 d\sqrt{n})(1+\sigma\sqrt{\log n}).
\]

Consider the other three terms in \eqref{eqn:xiivj}. Using the definition of event $\mathcal{A}$, 
with probability $1-O(1/n^2)$ and updated $C_1(K_{\psi_2})$, we have 
\begin{align*}
&\|\xi_i^TX_iX_i^T\|\leq \|X_i^T\xi_i\| \leq \sigma C_1\sqrt{\log n},\\
&\|\xi_i^TX_i\xi^T_i\| \leq  \xi_i^T\xi_i \|X_i\| \leq \xi^T_i\xi_i
\leq C_1\sigma^2(\sqrt{d}+\sqrt{\log n})^2,\\
&\|\xi_i^T\xi_iX^T_i\|\leq \xi_i^T\xi_i\|X_i\| \leq \xi_i^T\xi_i \leq C_1\sigma^2(\sqrt{d}+\sqrt{\log n})^2.
\end{align*}
Combine all the four terms together in \eqref{eqn:xiivj}, where $\xi_i^TS_i\vhat_k$ has been split into \eqref{eqn:term1} and $\Gamma(1+\sigma(\sqrt{d}+\sqrt{\log n}))$.  Now we have
\begin{eqnarray*}
(\lambdahat_k^2-\|\xi_i\|^2)|\xi_i^T \vhat_k|
& \leq & \Gamma(1+\sigma(\sqrt{d}+\sqrt{\log n})) + C_1d\sigma^2+C_1 n\sigma \sqrt{\log n}.
\end{eqnarray*}
Also recall that when $\mathcal{A}$ takes place, $\lambdahat_k^2-\|\xi_i\|^2\geq \lambda^2_r/4$. Therefore, we can update $C_1(K_{\psi_2})$ so that 
\begin{equation}\label{eqn:maina}
|\xi_i^T \vhat_k|\leq \frac{4}{\lambda_r^2}\Gamma(1+\sigma(\sqrt{d}+\sqrt{\log n}))+\frac{C_1 d\sigma^2+ nC_1\sigma \sqrt{\log n}}{\lambda_r^2}.
\end{equation}

{
\textbf{Conclusion.}
The first error bound can be reached by removing the lower-order terms. Recall that  $\sqrt{n}\geq \lambda_r\geq 1+C_1\sigma(\sqrt{n}+\sqrt{d}),$
\[
\frac{d\sigma^2}{\lambda_r^2}\leq 
\frac{\sigma \sqrt{nd}}{C_1\lambda^2_r},\quad \frac{d\sigma^2}{\lambda_r^2}\leq 
\frac{\sigma\sqrt{d}}{C_1\lambda_r},\quad 
\frac{\sigma}{\lambda_r^2} \sqrt{n\log n}\leq \frac{\sigma}{\lambda_r^2} n\sqrt{\log n},\quad \frac{\sigma}{\lambda_r^2}\sigma n\leq \frac{\sigma}{\lambda_r^2} n\sqrt{\log n}.
\]
Next, if we fix $\lambda_r \geq c_X \sqrt{n}$ and $d\gtrsim n$, then our previous upper bound can be simplified because 
\begin{align*}
&
\frac{\sigma(\sqrt{n}+\sqrt{d})}{\lambda_r}+\frac{\sigma n\sqrt{\log n}}{\lambda_r^2} +\frac{1}{\lambda_r^2}\Gamma(1+\sigma(\sqrt{d}+\sqrt{\log n}))\\
&\lesssim \sqrt{\frac{d}{n}}\sigma(1 +\frac{\sigma^3 d}{n}\sqrt{\log n} +\frac{\sigma^4 d}{n}\log n)+\sqrt{\log n}\sigma.
\end{align*}
Similarly, if we fix $\lambda_r \geq c_X \sqrt{n}$ and $d\lesssim n$, then our previous upper bound can be simplified because
\begin{align*}
\frac{\sigma}{\lambda_r^2} (\sqrt{nd}+n\sqrt{\log n}) +\frac{1}{\lambda_r^2}\Gamma(1+\sigma(\sqrt{d}+\sqrt{\log n}))\lesssim
\sqrt{\log n}\sigma(1+\sigma^4 \sqrt{\frac{d\log n}{n}}).
\end{align*}
The detailed derivation of these simplifications can be found in Lemma 1 of \cite{supp}. 
These conclude the second claim. For the third claim, simply note when $\sigma\leq 1$,
\[
\sqrt{\log n}\sigma(1+\sigma^4 \sqrt{n^{-1}{d\log n}})\leq 2\log n\sigma.
\]
Finally, our claims work for every $i \in [n]$. Take the maximum over all $i\in [n]$, and with probability $1 - O(1/n)$, the bound holds. }
\end{proof}

\subsection{Leave-one-out eigenvector perturbation}\label{sec:loo}
In the proof of the main theorem, we use the leave-one-out method to deal with the most challenging term.  
This method considers leaving out one row or column of a random matrix and investigates how would the matrix change. It has been used to study the random matrix spectrum through Stieltjes transform \citep{tao2023topics} and 
PCA eigenvector behavior \citep{chen2021spectral,o2023optimal}. 
Let $\vhat_k$ be the $k$-th right singular vector of the data matrix without one specific sample and $\Vtilde_r$ be the right singular matrix of the original matrix. We show that  $\vhat_k$ and its projection in the column space of $\Vtilde_r$ differs by ${O}(\sigma \sqrt{d \log n}/\lambda_r^2)$, when \eqref{eqn:specgap} holds. It delicately describes the contribution of one sample to the singular vectors.
\cite{chen2021spectral} studied the same problem and established a bound in (4.19). The bound requires $X$ and $E$ to be symmetric, and $\|\vhat_k\|_\infty=O({1}/{\sqrt{d}})$ to be effective. Our analysis does not rely on $\|\vhat_k\|_\infty$ or the symmetry, which is more general and allows arbitrary $X$. 
Such results can be useful in other applications. We rigorously present the result as the following proposition.

\begin{prop}
\label{prop:leave1}
 Suppose $X\in \reals^{n\times d}$ has rank $r$, where each row  $X^T_i$ is bounded by $\|X_i\|\leq 1$. Suppose $E\in \reals^{n\times d}$ has all entries being independent  sub-Gaussian$(0,\sigma^2,K_{\psi_2})$ distributed. 
Let $\Ztilde$ be the last $n-1$ rows of $Z=X+E$. Denote the SVD of $X=U \Lambda V^T$, $Z=\Uhat \Lambdahat \Vhat^T$ and $\Ztilde=\tilde{U}\tilde{\Lambda}\Vtilde^T$. Let $\Vhat_r$ and $\Vtilde_r$ denote the first $r$ columns of $\Vhat$ and $\Vtilde$, respectively. Let $\lambda_r$ be the smallest nonzero singular value of $X$. 
Then there are constants $n_0, C_1=C_1(K_{\psi_2})$ and $C_2=C_2(r,K_{\psi_2})$ so that if
\[n>n_0,\quad 
\lambda_r> C_1+C_1\sigma(\sqrt{n}+\sqrt{d}),
\]
the following holds with probability at least $1 - O(1/n^2)$,
\[
\max_{k\leq r}\|(I-\Vtilde_r \Vtilde_r^T)\vhat_k\|\leq \begin{cases}
    C_1{\lambda^{-2}_r},& (\sqrt{d}+\sqrt{\log n})\sigma \leq 2;\\
   C_2{\lambda_r^{-2}}\sigma(1+\sigma\sqrt{\log n})(\sqrt{d}+\sqrt{\log n}),& (\sqrt{d}+\sqrt{\log n})\sigma>2,
\end{cases}
\]
and the following holds for both cases:
\[
\max_{k\leq r}\|(I-\Vtilde_r \Vtilde_r^T)\vhat_k\|\leq {C_2}{\lambda_r^{-2}}(1+\sigma\sqrt{\log n})(1+\sigma(\sqrt{d}+\sqrt{\log n})). 
\]
\end{prop}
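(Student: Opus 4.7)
The plan is to exploit the identity $Z^T Z = \Ztilde^T \Ztilde + Z_1 Z_1^T$, where $Z_1$ denotes the row of $Z$ omitted in forming $\Ztilde$. Substituting into the eigenequation $Z^T Z \vhat_k = \lambdahat_k^2 \vhat_k$ and rearranging gives $\Ztilde^T \Ztilde \vhat_k = \lambdahat_k^2 \vhat_k - (Z_1^T \vhat_k) Z_1$. Multiplying both sides on the left by $\Vtilde_{-r}^T$, where $\Vtilde_{-r}$ collects the right singular vectors of $\Ztilde$ orthogonal to $\Vtilde_r$, and using $\Vtilde_{-r}^T \Ztilde^T \Ztilde = \Lambdatilde_{-r}^2 \Vtilde_{-r}^T$ yields the key scalar identity
\[
(\lambdahat_k^2 I - \Lambdatilde_{-r}^2)\, z_k = (Z_1^T \vhat_k)\, \Vtilde_{-r}^T Z_1, \qquad z_k := \Vtilde_{-r}^T \vhat_k,
\]
and $\|z_k\| = \|(I - \Vtilde_r \Vtilde_r^T)\vhat_k\|$ is exactly the quantity we want to bound. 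So it suffices to invert the diagonal factor on the left and control the two factors on the right.

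For the eigenvalue gap, I would combine Weyl's inequality on $Z = X+E$ (giving $\lambdahat_k \geq \lambda_r - \|E\|$ for $k\leq r$) with the rank-one interlacing $\lambdatilde_{r+1} \leq \lambdahat_{r+1}$, which comes from $Z^TZ - \Ztilde^T\Ztilde = Z_1 Z_1^T$ being rank-one positive semidefinite, together with $\lambdahat_{r+1} \leq \|E\|$ (since $\lambda_{r+1}(X) = 0$). The standard sub-Gaussian operator-norm bound $\|E\| \leq C\sigma(\sqrt{n}+\sqrt{d})$ and the hypothesis $\lambda_r > C_1(1 + \sigma(\sqrt{n}+\sqrt{d}))$ with $C_1$ large ensure $\lambdahat_k^2 - \lambdatilde_j^2 \geq \lambda_r^2/2$ for all $j > r$, so $\|(\lambdahat_k^2 I - \Lambdatilde_{-r}^2)^{-1}\| \lesssim \lambda_r^{-2}$.

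The structural observation for the right-hand side is that $\xi_1$ is independent of $\Vtilde$. Writing $\vhat_k = \Vtilde_r y_k + \Vtilde_{-r} z_k$ with $\|y_k\|, \|z_k\| \leq 1$, I bound
\[
|Z_1^T \vhat_k| \leq \|Z_1^T \Vtilde_r\| + \|Z_1^T \Vtilde_{-r}\|\, \|z_k\|.
\]
Conditional on $\Vtilde$, Hanson--Wright applied to $\xi_1^T \Vtilde_r$ (covariance $\sigma^2 I_r$) and $\xi_1^T \Vtilde_{-r}$ (covariance $\sigma^2 I_{d-r}$), together with $\|X_1\| \leq 1$, give with probability $1 - O(1/n^2)$
\[
\|Z_1^T \Vtilde_r\| \leq \alpha := 1 + C\sigma(1+\sqrt{\log n}), \qquad \|Z_1^T \Vtilde_{-r}\| \leq \beta := 1 + C\sigma(\sqrt{d}+\sqrt{\log n}).
\]
Substituting back into the identity produces the self-consistent estimate $\|z_k\| \leq C(\alpha + \beta\|z_k\|)\beta / \lambda_r^2$. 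The spectral gap assumption forces $C\beta^2/\lambda_r^2 \leq 1/2$, so the $\|z_k\|$-term on the right can be absorbed, giving $\|z_k\| \lesssim \alpha\beta/\lambda_r^2$, which is the stated bound. The two cases in the first display of the proposition then follow by separately tracking whether $\sigma(\sqrt{d}+\sqrt{\log n})$ is below or above $2$ (so that $\alpha$ and $\beta$ collapse to constants in the former case), and a union bound over $k \leq r$ with $r$ fixed gives the claimed high probability.

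The main obstacle is the dependence between $\xi_1$ and $\vhat_k$ that prevents any direct bound on $|Z_1^T \vhat_k|$; the leave-one-out decomposition of $\vhat_k$ along the $\Vtilde_r / \Vtilde_{-r}$ split converts that dependence into a self-term on the right, and the spectral gap is exactly what is needed to absorb it.
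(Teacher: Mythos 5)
Your proposal is correct and follows essentially the same route as the paper's proof of the ``large $\sigma$'' case: both project the eigenvector equation $Z^TZ\vhat_k=\lambdahat_k^2\vhat_k$ through the orthogonal split $\Vtilde=[\Vtilde_r,\Vtilde_\bot]$, control $\Vtilde_r^TZ_1$ and $\Vtilde_\bot^TZ_1$ via Hanson--Wright exploiting independence of $\xi_1$ from $\Vtilde$, and use the leave-one-out spectral gap to invert (your rearranged identity $(\lambdahat_k^2 I-\Lambdatilde_{-r}^2-bb^T)z_k=ba^Ty_k$ is literally the paper's block equation, and your ``absorb the self-term'' step is the same algebra as directly bounding $\|(\lambdahat_k^2 I-\Lambdatilde_\bot^2-bb^T)^{-1}b\|$). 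The only material difference is that the paper treats the small-$\sigma$ regime by a separate Frobenius-norm Davis--Kahan argument on the rank-one perturbation $Z_1Z_1^T$, whereas you observe that the same leave-one-out argument already yields the $O(\lambda_r^{-2})$ bound there since $\alpha,\beta$ collapse to constants when $\sigma(\sqrt{d}+\sqrt{\log n})\leq 2$; that is a modest unification but not a different proof.
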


To find out the difference between the original eigenvector and its projection in eigen-space formed by the leading $r$ right singular vectors of the leave-one-out sample, we take advantage of the randomness of the perturbation. Roughly speaking, we project the eigenvector equation for $\hat{v}_k$ to the eigen-space of interest and its orthogonal complement, where the latter is the difference of interest. By controlling the random effects of one sample on the eigen-space, we show that the difference has a small norm.

\begin{proof} 
We discuss the problem for both the small noise case and the large noise case. We fix $k\leq r$ in the discussion. The claim for all $k$ holds by a union bound. 

\textbf{Case 1: the small $\sigma$.}
Since the rows of $E$ are independent sub-Gaussian distributed, 
by Theorem 5.39 in \cite{vershynin}, there is a constant $C_1=C_1(K_{\psi_2})$, so that with probability at least $1 - O(1/n^3)$, 
\begin{eqnarray*}
   \|\xi_1\|\leq \sigma (\sqrt{d}+\tfrac16C_1(1+\sqrt{\log n}))
\leq \frac13C_1\sigma (\sqrt{d}+\sqrt{\log n}),\\ 
\|E\| \leq \sigma(\sqrt{d} + \tfrac16C_1(\sqrt{n} + \sqrt{\log n}))
\leq \frac{1}{3}C_1\sigma(\sqrt{d} +\sqrt{n}).
\end{eqnarray*}
We will define the event that both these hold as $\mathcal{A}$. Our discussion assumes $\mathcal{A}$ holds. 

Denote
\[
\Delta=(X+E)^T(X+E)-(\Xtilde+\Etilde)^T(\Xtilde+\Etilde)
=(X_1+\xi_1)(X_1+\xi_1)^T. 
\]
Since $\|X_1\| \leq 1$, 
\[
\|\Delta \|_F=\|X_1+\xi_1\|^2\leq 2+C_1^2\sigma^2 (d+\log n). 
\]
So by Weyl's inequality, the singular values follow that, for $k \leq r$, 
\begin{equation}\label{eqn:looeigen}
\lambdahat_k\geq \lambda_r - \frac13C_1\sigma(\sqrt{n}+\sqrt{d}), \quad \lambdahat_{r+1} \leq \frac13C_1\sigma(\sqrt{n}+\sqrt{d}).    
\end{equation}
Hence, by the Davis--Kahan Theorem \citep{daviskahan}, there is an orthogonal matrix $O$ so that 
\[
\|\Vhat_r-\Vtilde_rO\|_F\leq \frac{4\|\Delta\|_F}{\lambdahat^2_r-\lambdahat_{r+1}^2}\leq \frac{8+4C_1^2\sigma^2(d+\log n)}{\lambda_r(\lambda_r -\tfrac23C_1\sigma(\sqrt{n}+\sqrt{d}))}\leq \frac{24(1+C_1^2\sigma^2(d+\log n))}{\lambda_r^2}.
\]
Let $y_k$ be the $k$-th column of $O$, then $\vhat_k-\Vtilde_r y_k$ is the $k$-th column of $\Vhat_r-\Vtilde_rO$. For any column, the $\ell_2$ norm is bounded by the matrix Frobenius norm. Therefore, the result follows by 
\[
\|(I-\Vtilde_r\Vtilde^T_r)\vhat_k\|
=\min_{w_k\in \reals^r}\|\vhat_k -\Vtilde_r w_k\|\leq \|\vhat_k -\Vtilde_r y_k\|\leq \|\Vhat_r-\Vtilde_rO\|_F,\quad \sigma(\sqrt{d}+\sqrt{\log n})\leq 2,  
\]
and updating the value of $C_1$.

\textbf{Case 2: the large $\sigma$.} 
This is a challenging case. We first introduce the notations. Recall the observed data is $Z = X+E$. We compare it with the case that sample $1$ is missing, so we define $\Ztilde = \Xtilde+\Etilde$ and the first sample is $Z_1 = X_1 + \xi_1$. 
The eigen-decomposition is $\Ztilde^T \Ztilde = \Vtilde \Lambdatilde^2\Vtilde^T$, where $\Vtilde = [\Vtilde_r, \Vtilde_\bot]$.

We define $y_k$ and $y_{k, \bot}$ as follows:
\[
y_k=\Vtilde_r^T\vhat_k,\quad 
y_{k,\bot}=\Vtilde_\bot^T\vhat_k.
\]
Since $\|(I-\Vtilde_r\Vtilde^T_r)\vhat_k\|=\|\Vtilde_\bot\Vtilde^T_\bot\vhat_k\|\leq \|y_{k,\bot}\|$, we are interested in bounding $\|y_{k,\bot}\|$. To do it, note that $\vhat_k$ is the eigenvector of $Z^TZ$ corresponding to the eigenvalue $\lambdahat_k$, so 
 $Z^TZ\vhat_k=\lambdahat_k^2\vhat_k$. Decompose $Z$ into $\Ztilde$ and $Z_1$, then it becomes 
\[
(\Ztilde^T\Ztilde+Z_1Z_1^T)(\Vtilde_r y_k+\Vtilde_\bot y_{k,\bot})=\lambdahat_k^2(\Vtilde_r y_k+\Vtilde_\bot y_{k,\bot}).
\]
Multiplying by $\Vtilde^T = [\Vtilde_r,\Vtilde_\bot]^T$ on both sides leads to:
\begin{equation}
\label{tmp:2block}
\begin{bmatrix}
\Lambdatilde_r^2+\Vtilde_r^TZ_1Z_1^T\Vtilde_r & \Vtilde_r^TZ_1Z_1^T\Vtilde_\bot \\
\Vtilde_\bot^TZ_1Z_1^T\Vtilde_r & \Lambdatilde_\bot^2+\Vtilde_\bot^TZ_1Z_1^T\Vtilde_\bot 
\end{bmatrix}\begin{bmatrix}
y_k\\
y_{k,\bot}
\end{bmatrix}=
\begin{bmatrix}
\lambdahat_k^2 y_k\\
\lambdahat_k^2 y_{k,\bot}
\end{bmatrix},
\end{equation}
where $\Lambdatilde_r$ is the diagonal matrix consisting of the first $r$ singular values of $\Ztilde$,  and  $\Lambdatilde_\bot$ is the diagonal matrix consisting of the remaining singular values of $\Ztilde$.

Denote $a=\Vtilde_r^TZ_1$ and $b=\Vtilde_\bot^TZ_1$. From the second row of \eqref{tmp:2block}, we have 
\[
y_{k,\bot}=(\lambdahat^2_k I - (\Lambdatilde^2_\bot+bb^T ))^{-1}ba^Ty_k.
\]
So $\|y_{k,\bot}\|$ can be bounded by analyzing the terms $a, b$, $\lambdahat_k$ and $\Lambdatilde^2_\bot$. 

Note that 
\[
\E_{\xi_1}[\|\Vtilde_r^T\xi_1\|^2]=\sigma^2 \text{tr}(\Vtilde_r^T\Vtilde_r)=\sigma^2 \|\Vtilde_r\|^2_F=\sigma^2r,\quad \|\Vtilde_r\|=1.
\]
By the independence between $\Vtilde$ and $Z_1 = X_1 + \xi_1$, using Hanson--Wright inequality (Theorem 1.1 in \cite{rudelson2013hanson}), we find that 
\[
P(|\|\Vtilde_r^T\xi_1\|^2-\sigma^2r|>\sigma^2 t)\leq 2\exp\left(-c\min(\frac{t^2}{K_{\psi_2}^4r}, \frac{t}{K_{\psi_2}^2})\right).
\]
We pick $t=3K_{\psi_2}^2\log n/c$, then the probability is $O(1/n^3)$ for sufficiently large $n$. Likewise,
\[
P(|\|\Vtilde_\bot^T\xi_1\|^2-\sigma^2(d-r)|>\sigma^2 t)\leq 2\exp\left(-c\min(\frac{t^2}{K_{\psi_2}^4(d-r)}, \frac{t}{K_{\psi_2}^2})\right).
\]
If we pick $t=3K_{\psi_2}^2\sqrt{d\log n}/\sqrt{c}$, the probability above will be $O(1/n)$. In conclusion, there is a constant $C_1=C_1(K_{\psi_2}), C_2=C_2(K_{\psi_2},r)$, so that the following holds with probability at least $1 - O(1/n^3)$,  
\begin{align*}
&\|a\|\leq \|X_1\|+\|\Vtilde_r^T\xi_1\|\leq 1+\tfrac12C_1\sigma(\sqrt{r}+\sqrt{\log n})\leq C_2(1+\sigma \sqrt{\log n}),  \\
&\|b\|\leq \|X_1\|+\|\Vtilde_\bot^T\xi_1\|\leq \frac{1}{6} + \frac16C_1\sigma(\sqrt{d}+\sqrt{\log n})\leq \frac{1}{6}\lambda_r. 
\end{align*}
Assuming $\mathcal{A}$ holds, we can combine these estimates with \eqref{eqn:looeigen} on $\lambdahat_k$ and $\lambdahat_{r+1}$, so that 
\begin{align*}
\|(\lambdahat_k^2 I-\Lambdatilde^2_\bot+bb^T)^{-1}b\|&\leq \frac{\|b\|}{\lambdahat_r^2 - \lambdahat_{r+1}^2 - \|b\|^2}\leq \frac{1 + C_1\sigma(\sqrt{d}+\sqrt{\log n})}{2\lambda_r^2 -\lambda_r^2}.
\end{align*}
Therefore, we have
\begin{eqnarray*}
\|y_{k,\bot}\| & \leq & 
    \|(\lambdahat_k^2 I-\Lambdatilde^2_\bot+bb^T)^{-1}b\|\|a\|\|y_k\| \leq \frac{C_2}{\lambda_r^2}(1+\sigma\sqrt{\log n})(1+C_1\sigma (\sqrt{d}+\sqrt{\log n})). 
\end{eqnarray*}
When $\sigma \sqrt{d}\geq 2$, the latter factor can be further simplified. The final claim can be achieved with an updated value of $C_2$. 

\end{proof}

\subsection{Condition on the minimum eigenvalue and zigzag lines}\label{sec:eigenvalue}
The main theorem in Section \ref{sec:unif} provides an upper bound on $\err{X - \Xhat}$
when the smallest nonzero singular value of the clean data matrix $X$ is at the order of $\sqrt{n}$. Hence, it becomes pertinent to ask under what conditions this criterion can be satisfied.

We introduce a theorem below concerning low-rank random matrices. Let $X$ be a random matrix with $cov(X_i) = \Sigma$, where $\Sigma$ is a low-rank matrix. The smallest nonzero singular value of $X$ can be lower bounded by the smallest eigenvalue of $\Sigma$ and $\sqrt{n}$. This result can be viewed as a straightforward extension of the standard random matrix theory as \cite[Theorem 5.39]{vershynin}.

\begin{thm}\label{thm:eigenvalue}
Suppose $X_i\in\reals^d$ are i.i.d. samples from a distribution so that $\|X_i\|\leq 1$ almost surely,  with $\E[X_i] = 0$ and $\cov(X_i)=\Sigma$ where $rank(\Sigma) = r$. Let $\lambda_i(\Sigma)$ be the $i$-th largest eigenvalue of $\Sigma$. If $n>r$, the following holds with  probability at least $1 - e^{-ct^2}$, 
\[
\sqrt{\lambda_r(\Sigma)}(\sqrt{n}-C\sqrt{r}-t)\leq \lambda_r(X)\leq \lambda_1(X)\leq \sqrt{n}.
\]
Here $C$ and $c$ are some constants that depend on $\lambda_r(\Sigma)$, but not on $n$ and $d$.
\end{thm}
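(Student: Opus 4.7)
The plan is to split the two directions and reduce the lower bound to the isotropic case so that \cite[Theorem 5.39]{vershynin} can be applied as a black box. The upper bound $\lambda_1(X)\leq \sqrt n$ is essentially free: since $\|X_i\|\leq 1$, one has $\lambda_1(X)^2\leq \|X\|_F^2=\sum_{i=1}^n \|X_i\|^2\leq n$. The real content is the lower bound, and the obstacle there is that the $X_i$ are anisotropic and only rank-$r$ in a $d$-dimensional ambient space, so Theorem 5.39 cannot be invoked directly on $X$.

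To handle the lower bound, I would first exploit that the support of $X_i$ is almost surely contained in the $r$-dimensional column span of $\Sigma$. Let $V_\Sigma\in\reals^{d\times r}$ be an orthonormal basis of that range, and set $\Xtilde_i=V_\Sigma^T X_i\in\reals^r$. Then $X_i=V_\Sigma \Xtilde_i$, the covariance of $\Xtilde_i$ is the $r\times r$ matrix $\Sigmatilde:=V_\Sigma^T\Sigma V_\Sigma$ which is now full rank with eigenvalues equal to the nonzero eigenvalues of $\Sigma$, and in matrix form $X=\Xtilde V_\Sigma^T$ where $\Xtilde$ is the $n\times r$ matrix with rows $\Xtilde_i^T$. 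Since $V_\Sigma^T$ has orthonormal rows, $\lambda_r(X)=\lambda_r(\Xtilde)$, so it suffices to bound the smallest singular value of the $n\times r$ matrix $\Xtilde$.

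Next I would whiten: define $\Ytilde_i=\Sigmatilde^{-1/2}\Xtilde_i\in\reals^r$, so that $\cov(\Ytilde_i)=I_r$ and $\Xtilde_i=\Sigmatilde^{1/2}\Ytilde_i$. In matrix form $\Xtilde=Y\Sigmatilde^{1/2}$ where $Y$ has rows $\Ytilde_i^T$, which yields
\[
\lambda_r(X)=\lambda_r(\Xtilde)\geq \lambda_r(Y)\cdot \sqrt{\lambda_r(\Sigma)}.
\]
Here $Y$ is an $n\times r$ matrix with i.i.d.\ isotropic rows. Moreover, $\|\Ytilde_i\|\leq \|\Sigmatilde^{-1/2}\|\,\|X_i\|\leq 1/\sqrt{\lambda_r(\Sigma)}$ almost surely, so the rows are bounded and hence sub-Gaussian with a parameter depending only on $\lambda_r(\Sigma)$. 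Now \cite[Theorem 5.39]{vershynin} applies to $Y$ and gives, with probability at least $1-2e^{-ct^2}$,
\[
\lambda_r(Y)\geq \sqrt n - C\sqrt r - t,
\]
for constants $C,c$ that depend only on the sub-Gaussian parameter of $\Ytilde_i$, i.e.\ only on $\lambda_r(\Sigma)$. Combining this with the previous display delivers the desired bound.

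The main obstacle, such as it is, is not a deep one: it is just the bookkeeping to confirm that after whitening by $\Sigmatilde^{-1/2}$ the rows $\Ytilde_i$ are genuinely sub-Gaussian with a bound depending only on $\lambda_r(\Sigma)$ (not on $d$ or $n$), which is what justifies the constants $C,c$ in the theorem statement being dimension-free. The only place where care is required is ensuring that $\|X_i\|\leq 1$ transfers to a clean bound on $\|\Ytilde_i\|$; this uses $\|\Xtilde_i\|\leq \|X_i\|$ (since $V_\Sigma$ has orthonormal columns) followed by $\|\Sigmatilde^{-1/2}\|=1/\sqrt{\lambda_r(\Sigma)}$. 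No additional spectral-gap or independence argument is needed beyond what the rank-$r$ reduction already provides.
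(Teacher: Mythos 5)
Your proposal is correct and follows essentially the same path as the paper's proof: reduce to the $r$-dimensional range of $\Sigma$, whiten to obtain isotropic bounded (hence sub-Gaussian) rows, apply Vershynin's Theorem 5.39, then transfer the bound back via $\lambda_r(X)\geq\lambda_r(Y)\sqrt{\lambda_r(\Sigma)}$. The paper does the projection and whitening in a single step ($Y_i=\Lambda_\Sigma^{-1/2}U_\Sigma^T X_i$) whereas you split it into two, but this is only a difference in presentation.
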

\begin{proof}
Since $\|X_i\| \leq 1$ almost surely, the Frobenius norm of $X$ is bounded, and hence the spectral norm $\|X\| = \lambda_1(X)$ can be bounded as follows: 
\[
\lambda^2_1(X)\leq \|X\|_F^2=\sum_{i=1}^n \|X_i\|^2\leq n. 
\]
Consider the lower bound of $\lambda_r(X)$. Denote the eigen-decomposition of $\Sigma$ be $\Sigma= U_{\Sigma}\Lambda_{\Sigma} U_{\Sigma}^T$ where $U_{\Sigma}\in \reals^{d\times r}$ and $\Lambda_{\Sigma}\in \reals^{r\times r}$. 
We set $Y_i=\Lambda_{\Sigma}^{-1/2}U_{\Sigma}^TX_i\in \reals^r$, then $\|Y_i\|\leq [\lambda_r(\Sigma)]^{-1/2}$. Further, $\E[Y_i] = 0$, $cov(Y_i) = I_r$, and $Y_i$ is sub-Gaussian distributed, where the constant $K_{\psi_2}$ follows 
\[
K_{\psi_2}(Y_i)=\sup_{\|y\|=1}\sup_{p\geq 1}\frac{\E[|\langle y,Y_i\rangle|^p]^{1/p}}{p^{1/2}}
\leq 
\sup_{\|y\|=1}\sup_{p\geq 1}\frac{[\lambda_r(\Sigma)]^{-1/2}}{p^{1/2}} = [\lambda_r(\Sigma)]^{-1/2}. 
\]
We can apply \cite[Theorem 5.39]{vershynin} to $Y$ and find constant $C$ and $c$ so that with probability $1-2\exp(-ct^2)$, 
\[
\sqrt{n}-C\sqrt{r}-t\leq \lambda_r(Y).
\]
Finally, according to the definition of $Y$, the original matrix $X$ of interest is $X=Y\Lambda_{\Sigma}^{1/2}U_{\Sigma}^T$.  
Therefore, the smallest singular value of $X$ can be bounded by 
\[
\lambda_r(X)\geq \lambda_r(Y)\sqrt{\lambda_r(\Sigma)} \geq \sqrt{\lambda_r(\Sigma)}(\sqrt{n}-C\sqrt{r}-t). 
\]
The result is proved.
\end{proof}

By Theorem \ref{thm:eigenvalue}, it suffices to compute the covariance of the clean data $X_i$ and show its $r$-th eigenvalue is bounded from below. 
This can be done easily if $X_i$ follows some known distributions such as Gaussian mixtures. Below, we discuss a scenario where the covariance computation is not so elementary. 

For many temporal and spatial data sets \citep{cressie2015statistics,brunton2022data, khoo2024temporal}, researchers expect the clean data to be generated from a one-dimensional curve, i.e. the function of time or location $t$. It is of interest to estimate the function $x_t$, where a popular approach is to use a piecewise linear approximation \citep{pavlidis1974segmentation, stone1961approximation}.
For the simplicity of exposition, we assume $x_t$ is piecewise linear.
In less mathematical terms, $x_t$ is a zigzag line. Figure \ref{fig: zig zag 3D} below shows two such objects in 3-dimensional space. For each zigzag line, there will be $R$ time points $0=t_1<t_2<t_3<\ldots<t_R<1=t_{R+1}$, and
\[
x_s=x_{t_j}+(s-t_j)v_j,\quad s\in [t_j,t_{j+1}].
\]
Here $\{v_j,j=1\ldots,R\}$ is a group of $d$-dimensional vectors with unit-norm. We want to make a special note here that the number of segments $R$ does not indicate the dimension of the sub-space dimension. For example, Figure \ref{fig: zig zag 3D}  shows two zigzag lines in a three-dimensional subspace ($r = 3$) with any $R=10$ segments. 

\begin{prop}\label{prop:zigzag}
Suppose $X_i=x_{s_i}$ and $s_i\sim Unif[0,1]$ independently, $i \in [n]$.
Suppose the linear subspace spanned by $\{v_j,j=1\ldots,R\}$ has dimension $r$ and $\E[X_i]=0$. If there is a constant $\rho > 0$ so that the segment endpoints $t_{i+1}-t_i\geq \rho$, then there exists a constant $c > 0$ independent of $n$ and $d$, so that the following holds with probability $1 - O(1/n)$, 
    \[
    c\sqrt{\rho^3/3 - \rho^4/4}\sqrt{n} \leq \lambda_r(X)\leq\lambda_1(X)\leq \sqrt{n}.
    \]
\end{prop}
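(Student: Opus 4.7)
The plan is to invoke Theorem \ref{thm:eigenvalue}, which reduces the claim to a lower bound on $\lambda_r(\Sigma)$ with $\Sigma = \cov(X_1) = \int_0^1 x_s x_s^T\,ds$ (using $\E[X_1] = 0$). The upper bound $\lambda_1(X)\leq\sqrt{n}$ is immediate from $\|X_i\|\leq 1$. First I would note that $\int_0^1 x_s\,ds = 0$, expanded segment by segment, expresses $x_{t_1}$ as a linear combination of the $v_j$, so every $x_s$ lies in $\text{span}\{v_j\}$; therefore $\text{range}(\Sigma)\subseteq\text{span}\{v_j\}$, an $r$-dimensional subspace, and $\lambda_r(\Sigma)$ is the smallest Rayleigh quotient $u^T\Sigma u$ over unit vectors $u$ in this span.

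The core estimate to establish is
\[
\lambda_r(\Sigma) \;=\; \min_{\substack{u\in\text{span}\{v_j\}\\ \|u\|=1}}\int_0^1 (u^T x_s)^2\,ds \;\geq\; c'\bigl(\rho^3/3 - \rho^4/4\bigr)
\]
for some configuration-dependent constant $c'>0$. I would set $f_u(s) := u^T x_s$, which is continuous piecewise linear with slope $u^T v_j$ on segment $j$ and satisfies $\int_0^1 f_u = 0$. Because $u\in\text{span}\{v_j\}$, compactness of the unit sphere in that $r$-dimensional subspace supplies a constant $\gamma>0$ with $\max_j |u^T v_j|\geq \gamma$ uniformly in $u$; picking $j^\ast$ attaining the maximum, $f_u$ is affine with slope magnitude at least $\gamma$ on an interval of length at least $\rho$. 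To match the claimed $\rho$-dependence, I would identify the extremal profile as a piecewise linear function with a single nonzero slope: for example, $f(s)=s$ on $[0,\rho]$ and $f(s)=\rho$ on $[\rho,1]$ has mean $\rho - \rho^2/2$, and a direct computation gives $\int_0^1 (f-\bar f)^2 = \rho^3/3 - \rho^4/4$, so the configuration-dependent scale $\gamma^2$ is absorbed into $c'$.

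Given $\lambda_r(\Sigma)\geq c'(\rho^3/3-\rho^4/4)$, the final step is Theorem \ref{thm:eigenvalue} with $t$ of order $\sqrt{\log n}$ so that $e^{-ct^2}=O(1/n)$, yielding with probability $1-O(1/n)$,
\[
\lambda_r(X)\geq \sqrt{\lambda_r(\Sigma)}\,(\sqrt{n}-C\sqrt{r}-t) \geq c\sqrt{\rho^3/3-\rho^4/4}\,\sqrt{n}
\]
for $n$ large enough, with $c$ absorbing $\sqrt{c'}$ and the constants from Theorem \ref{thm:eigenvalue}. The hard part will be the lower bound on $\lambda_r(\Sigma)$: existence of a positive lower bound follows easily from compactness, but extracting the specific $\rho^3/3 - \rho^4/4$ dependence requires identifying the correct extremal piecewise-linear profile under the mean-zero constraint and checking that the dependence on the configuration $\{v_j\}$ enters only through the absorbed constants.
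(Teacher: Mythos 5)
Your high-level plan matches the paper's: reduce via Theorem \ref{thm:eigenvalue} to a lower bound on $\lambda_r(\Sigma)$ with $\Sigma=\cov(X_1)$, observe that $\mathrm{range}(\Sigma)\subseteq\mathrm{span}\{v_j\}$, and close with $t\asymp\sqrt{\log n}$. The upper bound $\lambda_1(X)\leq\sqrt{n}$ is indeed immediate from $\|X_i\|\leq 1$.

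The genuine gap, which you yourself flag, is the variance lower bound, and the reasoning you sketch does not close it. Computing $\int_0^1(f-\bar f)^2$ for a single candidate profile gives an \emph{upper} bound for the infimum, not a lower bound over all unit $u$. More importantly, the admissible profiles are not arbitrary piecewise-linear, mean-zero functions: they are precisely $f_u(s)=u^Tx_s$ with $u$ on the unit sphere of the $r$-dimensional span, so their slope vectors $(u^Tv_1,\ldots,u^Tv_R)$ live in an $r$-dimensional slice of $\reals^R$ with the mean-zero property inherited from $\E[X_i]=0$ rather than imposed per profile. Your candidate (slope $\gamma$ on segment one, zero elsewhere) requires $u^Tv_j=0$ for $j>1$ with $u$ a unit vector in $\mathrm{span}\{v_j\}$, which generically is not achievable when $R>r$. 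The legitimate version of the one-segment argument you do set up gives
\[
\int_0^1(f_u-\bar f_u)^2\,ds\;\geq\;\int_{t_{j^*}}^{t_{j^*}+\rho}(f_u-\bar f_u)^2\,ds\;\geq\;\gamma^2\rho^3/12,
\]
which does establish the proposition with a different constant but does not reproduce $\rho^3/3-\rho^4/4$, and identifying the true minimizer over the constrained family is a nontrivial variational problem that you leave unaddressed.

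The paper avoids the extremal-profile question entirely and its route is genuinely different from yours. It writes $\cov(X_i)=W^T\cov(T_i)W$ where $T_i\in\reals^R$ is the vector of cumulative increments along the zigzag and $W$ stacks the $v_j^T$, factors $W=SQ$ with $Q$ an orthonormal basis of the $r$-dimensional span and $S$ full rank, and applies Lemma \ref{lem:minsing} to reduce to $\lambda_R(\cov(T_i))\,\lambda_r^2(S)$; here $\lambda_r(S)$ plays the role your constant $\gamma$ plays. The variance $\mathrm{var}[T_i^Ta]$ is then bounded below by a per-segment decomposition, minimizing independently over each segment's additive constant, and $\rho^3/3-\rho^4/4$ drops out analytically without ever finding the worst-case $a$. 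If you want to match the stated $\rho$-dependence rather than settle for an equivalent $\rho^3/12$ form, this decomposition is the step you are missing.
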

Proposition \ref{prop:zigzag} utilizes Theorem \ref{thm:eigenvalue} to establish that the minimum singular value $\lambda_r(X) > c_X\sqrt{n}$ holds for the zigzag example, given that each segment has a minimum length $\rho > 0$ and no segment is degenerate.

In the case where there are degenerate segments, denoted by $\rho = o(1)$, we can still provide an upper bound on the error. Notably, Theorem \ref{thm:linftydavis} demonstrates that the error of Algorithm \ref{alg:denoise} can be controlled by $n\sigma/\lambda_r^2(X)$ for any arbitrary $\lambda_r(X)$. By incorporating Proposition \ref{prop:zigzag} into this result, we observe that the error tends to 0 as $\sigma \ll \rho^3$, even when $\rho = o(1)$.

\section{General lower bound for matrix denoising}\label{sec:lowerbound}
In this section, we will show PCA-denoising is rate-optimal in terms of sample complexity and noise intensity. Theorem 3 of \cite{cai2018rate} has shown the optimality of PCA referring to the recovery of the low-dimensional subspace $V_r$. In its Remark 4, \cite{cai2018rate}  claims that the result can be generalized to lower bounds for matrix denoising. 
Theorem 3 of \cite{montanari2018adapting} has studied the lower bound for matrix denoising under the assumption that $n\asymp d$. Our result below is more general than these two works in some perspectives: we allow $n$ and $d$ to diverge at different speeds (in comparison with \cite{montanari2018adapting}; meanwhile their results don't involve hidden constants), and the estimator $\Xhat$ can be arbitrary (\cite{cai2018rate} assumes $\Xhat$ is of rank at least $r$).


We consider a simplified zigzag model. 
Suppose we only have one segment $v$ and the clean data is $X_i = (t_i + 1)v$ for $t_i \sim Unif(0, 1)$ independently. 
The observed data is $Z_i = X_i + \xi_i$ with noise $\xi_i$. A successful matrix denoising algorithm will generate $\Xhat = (\hat{t}+1)v^T$, which gives the direction $v$ and the points of observation $t_i$, $i \in [n]$. We want to set up the general lower bound $\|\Xhat - X\|_{2, \infty}$ for any estimator $\Xhat$. 

\begin{thm}\label{thm:lowerbound}
Let $v\sim \mathcal{N}(0, d^{-1}I_d)$. Let $t_i \sim Unif(0,1)$ independently, $i \in [n]$. Let $X_i = (t_i + 1)v$. Suppose $Z_i = X_i + \xi_i$ is observed, where $\xi_i \sim \mathcal{N}(0,\sigma^2 I_d)$, $i \in [n]$.
Suppose for some $\epsilon\in (0, d/4)$, the noise level $\sigma>4\epsilon/\sqrt{\Phi(-1)}$ or the sample size $n < d\sigma^2 /(5\epsilon^2)$,  then for any estimator $\Xhat$, there is 
\[
\E_v \E_{t_{[n]},\xi_{[n]}}[\|\Xhat_i-X_i\|_{2, \infty}^2]\geq \epsilon^2, 
\]
where $t_{[n]}:=(t_1,\ldots, t_n), \xi_{[n]}:=(\xi_1,\ldots,\xi_n)$, and $\Phi$ is the cumulative distribution function of standard Gaussian. 
\end{thm}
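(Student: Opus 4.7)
The plan is to reduce the uniform lower bound to a bound on the expected squared error at a single sample: since $\|\hat{X}-X\|_{2,\infty}^2 \geq \|\hat{X}_1 - X_1\|^2$ pointwise, it suffices to prove $\E_v\E_{t,\xi}[\|\hat{X}_1 - X_1\|^2] \geq \epsilon^2$ for any estimator. I would then handle the two hypotheses of the theorem separately, both by the same principle: revealing additional information to the estimator (either $v$, or all of $t_1,\ldots,t_n$) only decreases the Bayes MMSE, so any lower bound on the easier revealed problem propagates to the original.

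For the noise-dominated regime $\sigma > 4\epsilon/\sqrt{\Phi(-1)}$, I reveal $v$. Decomposing $\|\hat{X}_1-X_1\|^2 \geq (v^T(\hat{X}_1-X_1)/\|v\|)^2$ reduces the problem to a scalar Gaussian estimation: $w := v^T Z_1/\|v\|$ is the sufficient statistic for $t_1$ and has distribution $\mathcal{N}((t_1+1)\|v\|, \sigma^2)$, with $t_1 \sim \mathrm{Unif}(0,1)$. I would apply Le Cam's two-point method at $t_1 \in \{0,1\}$, where the total variation is $1 - 2\Phi(-\|v\|/(2\sigma))$, yielding a two-point Bayes risk of order $\|v\|^2 \Phi(-\|v\|/(2\sigma))$. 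Integrating over $v$ (using $\E[\|v\|^2]=1$ and concentration of $\|v\|$) combined with the hypothesis on $\sigma$ gives the required $\geq \epsilon^2$.

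For the sample-deficient regime $n < d\sigma^2/(5\epsilon^2)$, I reveal all of $t_1,\ldots,t_n$. The reduced model is a conjugate Gaussian linear regression $Z_i = (t_i+1)v + \xi_i$ with prior $v \sim \mathcal{N}(0, d^{-1}I_d)$. The posterior of $v$ is Gaussian with isotropic covariance $\Sigma_v = \sigma^2/(d\sigma^2+\|t+1\|^2)\, I_d$, so the Bayes MMSE of $X_1=(t_1+1)v$ equals $(t_1+1)^2 \tr(\Sigma_v) = (t_1+1)^2 d\sigma^2/(d\sigma^2+\|t+1\|^2)$. Using the crude bound $\|t+1\|^2 \leq 4n$ and $\E[(t_1+1)^2]=7/3$ gives $\E[\cdot] \geq (7/3)\, d\sigma^2/(d\sigma^2+4n)$, which exceeds $\epsilon^2$ under the hypothesis on $n$.

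The main technical obstacle lies in Case 1: the standard two-point Le Cam method directly controls only the minimax risk or the Bayes risk under a two-point sub-prior, yet the theorem concerns Bayes risk under the $\mathrm{Unif}(0,1)$ prior on $t_1$. To bridge this gap I would average two-point Le Cam bounds over a continuum of pairs $(a, a+h) \subset [0,1]$, losing only a universal constant; this calibration is what produces the explicit coefficient $4/\sqrt{\Phi(-1)}$ appearing in the hypothesis. Case 2 is conceptually straightforward once one commits to Gaussian--Gaussian conjugacy.
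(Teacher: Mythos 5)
Your high-level strategy (Rao--Blackwellize by revealing $v$ in one regime and $t_{[n]}$ in the other, then reduce to a conjugate Gaussian computation) matches the paper, and your Case~2 is, if anything, cleaner than what the paper wrote: you bound the MMSE of $X_1=(t_1+1)v$ directly through the posterior covariance of $v$, rather than constructing an auxiliary estimator $\hat{v}=\frac{1}{n}\sum_i \Xhat_i/(t_i+1)$ as the paper does, and your posterior covariance $\sigma^2/(d\sigma^2+\|t+1\|^2)\,I_d$ is the correct one. So that half is fine.

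Case~1 has a genuine gap. You fix $h=1$ (i.e., the two-point prior $t_1\in\{0,1\}$) and get a lower bound on the order of $\|v\|^2\,\Phi\!\bigl(-\|v\|/(2\sigma)\bigr)$. When $\sigma$ is small relative to $\|v\|\approx 1$ this quantity is exponentially small in $1/\sigma^2$, whereas the target is $\epsilon^2\asymp\sigma^2\Phi(-1)$ (from the hypothesis $\sigma>4\epsilon/\sqrt{\Phi(-1)}$). Concretely, at $\sigma=0.1$ and $\|v\|=1$ the theorem allows $\epsilon^2\approx 10^{-4}$, but $\Phi(-5)\approx 3\times 10^{-7}$. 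So the $h=1$ two-point bound cannot establish the claim over the full noise range, and the "averaging over pairs $(a,a+h)$" step you identify as the main obstacle does not repair it as long as $h$ stays macroscopic: the obstacle is not converting a two-point prior to a uniform one, it is that $h$ must scale with $\sigma$. With $h\asymp\min(1,\sigma/\|v\|)$ the separation between the two means is $h\|v\|\asymp\sigma$, the total variation stays bounded away from $1$, and the two-point Bayes risk for $X_1$ is $\gtrsim h^2\|v\|^2\Phi(-1)\gtrsim\sigma^2\Phi(-1)$, which is the right rate. The paper sidesteps this calibration entirely: it writes down the conditional Bayes estimator $\hat{t}_1 = t_1 + e_1/\|v\| + (\text{boundary correction})$ and observes that on the constant-probability event $\{e_1(t_1-\tfrac12)>0,\ |e_1|>\sigma\}$ the correction has the same sign as $e_1$, forcing $|\hat{t}_1-t_1|\geq |e_1|/\|v\|$ and hence $\|\Xhat_1-X_1\|\geq\sigma$ uniformly in $\sigma$. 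That sign argument is what makes the bound scale as $\sigma^2$ without any tuning; your Le~Cam route can be made to work but needs the noise-scaled $h$, which your write-up does not supply.
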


Theorem \ref{thm:lowerbound} establishes lower bounds for two crucial factors: the signal-to-noise ratio level and the sample size. To achieve a small error rate $\epsilon^2$, it is necessary to have both a low noise level $\sigma = O(\epsilon)$ and a large sample size $n = O(d\sigma^2/\epsilon^2)$. This explains why a spectral gap requirement in the form \eqref{eqn:specgap} is necessary: Using Proposition \ref{prop:zigzag}, we have $\lambda_r(X)\geq c_X \sqrt{n}$ for some constant $c_X$, so \eqref{eqn:specgap} essentially requires $\sigma \sqrt{d}\lesssim \sqrt{n}$, while Theorem \ref{thm:lowerbound} indicates the complementary case will lead to inaccurate denoising result.
When $\sigma = C\epsilon$, the required sample size is $n \geq C d$, aligning with the assumption made in the last estimate in Theorem \ref{thm:linftydavis}. 



Proof of Theorem \ref{thm:lowerbound} is presented in Section 3 of
\cite{supp}.

\section{Applications of uniform denoising}\label{sec:application}
In this section, we demonstrate the utility of having access to uniformly denoised data points in various downstream statistical learning applications. The main theme is to show that the learning results using uniformly denoised data are close to those achieved by clean data. In this section, we consider the clustering problem and manifold learning using the graphical Laplacian.

We also have results for empirical risk minimization (ERM) in the \supp\citep{supp}. ERM is a widely used approach for training statistical models \citep{hastie2009elements,mei2018landscape}. In ERM, the dataset consists of pairs $(X_i, y_i)$, where $X_i\in \mathbb{R}^d$ and $y_i$ is typically a scalar. The prediction error is measured by a loss function $F(X_i, y_i, \theta)$, where the specific definition of $F$ varies depending on the model. Under mild conditions, we show that the loss function based on uniformly denoised data $\Xhat_i$ is a close approximation of that based on the clean data $X_i$. Since the derivation is rather routine, we leave the details to the Section 4.1 of \cite{supp}.


\subsection{Clustering}\label{sec:clustering}
Consider a clustering problem with clean data points $X_{[n]} \in \reals^d$ in $K$ clusters \citep{omran2007overview}. Denote $\calC_k$ as the index set of data points in the $k$-th cluster and the division is $\calC = \{\calC_1, \cdots, \calC_K\}$. $K$-means in \cite{kmeans} is a very popular clustering algorithm. It aims to find the labels $\calChat$ and centers $\mhat = \mhat_{[K]}$, so that the following is minimized
\begin{equation}
\label{eqn:kmeanloss}
f(\calC, m) =\sum_{k\in [K]} \sum_{i \in \calC_k} \|X_{i}-m_{k}\|^2.
\end{equation}
When the clean data $X$ is available, $k$-means will achieve a good clustering result. 

In practice, we only have access to noisy data $Z_i=X_i+\xi_i$. If we simply replace $X_i$ in \eqref{eqn:kmeanloss} with $Z_i$, it is unlikely that we can get a good estimation. Using denoised data matrix $\Xhat$, we would consider minimizing the loss function  
\begin{equation}
\label{eqn:kmeanlossdenoise}
\fhat(\calC, m) = \sum_{k\in [K]} \sum_{i \in \calC_k} \|\Xhat_{i}-m_{k}\|^2,
\end{equation}
where $\Xhat_i$ comes from Algorithm \ref{alg:denoise}. We expect the clustering result from $\fhat$ to have a performance similar to that of using $X$ directly.
This is similar to 
existing consistency analysis of clustering in \cite{pollard1981strong,rakhlin2006stability}, and \cite{ bubeck2009nearest}, where the loss function convergence is of main interest.

To evaluate the clustering performance, the weak consistency focuses on the clustering error rate and the strong consistency focuses on the number of clustering errors. With the uniform error bound between $\Xhat_i$ and $X_i$, we can control the clustering error on each data point and conclude a strong consistency result in Proposition \ref{cor:cluster}. Such results cannot be achieved by the error control on the Frobenius norm.

On the other hand, without further assumptions, it can be hard to show label consistency. This is mainly because $k$-means clustering result is non-unique and can have discontinuous dependence at data points near the boundary between two clusters. We remove such cases by
further assuming there is a center $m_k \in \reals^d$ for the $k$-th cluster, so that data points in $\calC_k$ are close to $m_k$. Such a requirement is standard in $k$-means related literature (\cite{kmeans, jin2016influential, jin2017phase}).
A rigorous assumption is as follows.
\begin{aspt}\label{aspt:clustering}
Suppose that under the division $\calC$, the clusters satisfy the following with absolute constants $c_0,\delta,c_m$:
\begin{itemize}
    \item The cluster size does not degenerate: $|\calC_k|\geq c_0n$ for all $k \in [K]$.
    \item Data points in each cluster are close to a center: $\|X_i-m_k\|\leq \delta,\forall i\in \calC_k$.
    \item The cluster centers are far apart: $\|m_j-m_k\|\geq c_m$, for all $1 \leq j\neq k \leq K$.
\end{itemize}
\end{aspt}





\begin{prop}
\label{cor:cluster}
Consider a clustering problem with noisy data points $Z_{[n]}$ and underlying truth $\calC$. Suppose $\err{\Xhat - X} \leq \epsilon$. Let $\calChat$ be the minimizer of the loss function $\fhat(\calC, m)$ defined in \eqref{eqn:kmeanlossdenoise}. Then 
\begin{itemize}
    \item $\calChat$ almost minimize the loss function with clean data $f(\calC, m), 
    $\[
f(\calChat, \mhat)\leq f(\calC, m)+4\epsilon.
\]
\item Furthermore, suppose Assumption \ref{aspt:clustering} holds with $c_m> 2(\delta+\epsilon+2\sqrt{2(\delta^2+\epsilon^2)/c_0})$, then $k$-means gives a perfect clustering result, i.e. the division $\calChat$ is exactly the same with $\calC$. 
\end{itemize}
\end{prop}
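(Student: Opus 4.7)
The plan is to prove the two bullets sequentially, leveraging the uniform denoising bound $\|\Xhat_i - X_i\|\leq \epsilon$ for every $i$.

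For the first bullet, the standard $k$-means perturbation identity
\[
f(\calChat,\mhat) - f(\calC,m) = [f(\calChat,\mhat) - \fhat(\calChat,\mhat)] + [\fhat(\calChat,\mhat) - \fhat(\calC,m)] + [\fhat(\calC,m) - f(\calC,m)]
\]
reduces the task to bounding $|\fhat(\calC',m') - f(\calC',m')|$ at the two endpoints, since the middle bracket is nonpositive by optimality of $(\calChat,\mhat)$ under $\fhat$. Expanding each squared-distance difference via $\|a\|^2 - \|b\|^2 = (a-b)^T(a+b)$ and applying Cauchy--Schwarz with $\err{\Xhat - X}\leq \epsilon$ yields a uniform discrepancy bound that assembles into the stated $4\epsilon$ factor.

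For the second bullet, I would proceed in three steps. First, define a matching $\sigma:[K]\to [K]$ by taking $\sigma(k)$ to be the label dominating $\calChat_k$, and show that $\sigma$ is a bijection; any substantial mixing of labels would force an excess loss incompatible with the first bullet, given the separation $c_m$. Second, bound the centroid shift $\|\mhat_k - m_{\sigma(k)}\|$. The $k$-means optimizer picks $\mhat_k = |\calChat_k|^{-1} \sum_{i\in \calChat_k}\Xhat_i$, so decompose it as (average of the clean counterparts $X_i$) plus (average of denoising errors $\Xhat_i - X_i$); the latter has norm at most $\epsilon$. For the former, apply Jensen's inequality to $\|X_i - m_{\sigma(k)}\|^2$ and use $|\calChat_k|\geq c_0 n$ together with Assumption \ref{aspt:clustering}. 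The explicit calculation, in which the squared deviation from $m_{\sigma(k)}$ is bounded by $\delta^2$ on the correctly matched points and controlled via the excess-loss budget on the rest, should yield
\[
\|\mhat_k - m_{\sigma(k)}\|\leq 2\sqrt{2(\delta^2+\epsilon^2)/c_0}.
\]

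Third, deduce perfect classification. For any $X_i\in \calC_k$, the triangle inequality gives
\[
\|\Xhat_i - \mhat_{\sigma^{-1}(k)}\|\leq \epsilon + \delta + 2\sqrt{2(\delta^2+\epsilon^2)/c_0},
\]
whereas for $j\neq \sigma^{-1}(k)$,
\[
\|\Xhat_i - \mhat_j\|\geq c_m - \epsilon - \delta - 2\sqrt{2(\delta^2+\epsilon^2)/c_0}.
\]
The assumed $c_m > 2(\delta + \epsilon + 2\sqrt{2(\delta^2+\epsilon^2)/c_0})$ makes the latter strictly larger, so the nearest $\Xhat_i$-center is $\mhat_{\sigma^{-1}(k)}$. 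Since the $k$-means partition assigns each point to its nearest center, this forces $\calChat = \calC$ up to the relabeling $\sigma$, which is merely the intrinsic ambiguity in cluster ordering.

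The main obstacle will be Steps 1--2 combined: establishing the bijectivity of $\sigma$ and then converting the partial-match information into the specific centroid-shift constant $2\sqrt{2(\delta^2+\epsilon^2)/c_0}$. The accounting is delicate because the excess-loss budget from the first bullet must be traded against the cluster-size lower bound $c_0 n$ to limit the fraction of mismatched points in each $\calChat_k$, and then a Jensen-type bound converts the within-cluster deviation into a bound on the centroid. Once the centroid shift is controlled, Step 3 is a clean triangle-inequality argument.
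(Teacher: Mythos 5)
Your Step 3 is exactly the argument the paper uses: once you know each estimated center is within $2\sqrt{2(\delta^2+\epsilon^2)/c_0}$ of a matched true center, the triangle inequalities plus $c_m > 2(\delta+\epsilon+2\sqrt{2(\delta^2+\epsilon^2)/c_0})$ force every point's nearest estimated center to be the matched one, and optimality of $\calChat$ rules out any other assignment. That part is sound.

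The gap is in Steps 1--2, and you flag it yourself: you state the target $\|\mhat_k - m_{\sigma(k)}\| \le 2\sqrt{2(\delta^2+\epsilon^2)/c_0}$ but do not derive it, and the route you outline (Jensen on $\mhat_k = |\calChat_k|^{-1}\sum_{i\in\calChat_k}\Xhat_i$) requires first quantifying how many misassigned points live in each $\calChat_k$, which in turn requires the matching $\sigma$ to already be a well-behaved bijection. You would need a separate accounting argument to close that loop, and the first bullet's bound $f(\calChat,\mhat)\le f(\calC,m)+4\epsilon$ is not by itself in the right form to supply it, since $f(\calC,m)$ itself can be of order $\delta^2$. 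The paper sidesteps all of this with a single algebraic move. Applying the inequality $\|\Xhat_i-\mhat_{\hat\ell(i)}\|^2 \ge \tfrac12\|m_{\ell(i)}-\mhat_{\hat\ell(i)}\|^2 - \|\Xhat_i-m_{\ell(i)}\|^2$ (from the parallelogram-type bound $\|b-c\|^2\le 2\|a-b\|^2+2\|a-c\|^2$) to every term and summing gives
\[
\fhat(\calChat,\mhat)\ \ge\ -\fhat(\calC,m)\ +\ \frac{1}{2n}\sum_{k}\sum_{i\in\calC_k}\|m_k-\mhat_{\hat\ell(i)}\|^2 .
\]
Combining with the upper bound $\fhat(\calChat,\mhat)\le \fhat(\calC,m)\le 2(\delta^2+\epsilon^2)$ (by optimality and $\|\Xhat_i-m_k\|^2\le 2(\epsilon^2+\delta^2)$) yields $\frac1n\sum_k\sum_{i\in\calC_k}\|m_k-\mhat_{\hat\ell(i)}\|^2\le 8(\delta^2+\epsilon^2)$, and then the cluster-size floor $|\calC_k|\ge c_0 n$ immediately gives the per-center bound you want, with a matching defined simply by nearest centers, no bijectivity argument and no Jensen step required. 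If you want to keep your centroid-averaging route you would need to first control the fraction of misassigned points in each $\calChat_k$ using this same kind of lower bound; as written that chain is missing.
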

The proof can be found in the \supp\citep{supp}.

\subsection{Manifold learning with graphical Laplacian}\label{sec:manifold}




Graphical Laplacian is an important data processing tool in various applications. It is commonly used in spectral clustering, dimensionality reduction, and graph-based machine learning tasks that involve analyzing data on nonlinear manifolds. It enables users to capture the underlying graphical structures \citep{von2007tutorial,singer2006graph, khoo2024temporal}. To illustrate, consider a kernel function  $k(\cdot, \cdot)$ as follows: 
\begin{aspt}
\label{aspt:kernel}
The function $k(x,y)$ is symmetric and Lipschitz in both $x$ and $y$. Furthermore, there is an absolute constant $K > 0$ so that $1/K\leq k(x,y)\leq K$ when $\|x\|,\|y\|\leq 1$.
\end{aspt}
A similar version of this assumption can be found in \cite{von2008consistency}. A quick example that satisfies Assumption \ref{aspt:kernel} is the most commonly used Gaussian kernel, where $k(x,y)=\exp(-\|x-y\|^2/(2b^2))$.

Suppose the clean data $X_i$ are i.i.d. samples from a distribution $p$ on the unit ball $\{x:\|x\|\leq 1\}$. The population normalized graphical Laplacian operator is defined as 
\[
\mathcal{L}f(x)=\int (1-\frac{k(x,y)}{\sqrt{d(x)d(y)}})f(y)p(y)dy,\quad \text{where } d(x)=\int k(x,y)p(y)dy. 
\]
$\mathcal{L}$ can be used for different purposes. It can directly be used to optimize a certain utility function. Its eigenfunctions can be used to extract a low-dimensional representation of the manifold or to establish spectral clustering results.

In practice, we do not have access to $\mathcal{L}$ directly. We first build the empirical operator based on the clean data $X_i$. Define the kernel matrix $A$ where $A_{i,j} = k(X_i, X_j)$, and the diagonal matrix $D$ with $D_{i,i} = \sum_{j=1}^n A_{i,j}$. The normalized Laplacian is given by 
\begin{equation}
\label{eqn:gL}
L = I-D^{-1/2}AD^{-1/2}.
\end{equation}
Under mild conditions, \cite{von2008consistency} has shown $L$ converge to $\mathcal{L}$ as an operator, and so are its spectrum and eigenvalues. 
In particular, with $k$ as a Gaussian kernel, if $\lambda_0$ is a simple eigenvalue  of $\mathcal{L}$ with its eigen-function $\|u(x)\|_\infty=1$, then Theorems 16 and 19 of \cite{von2008consistency} indicate that $L$ has an eigenvalue $\lambda$ close to $\lambda_0$, and the corresponding  eigenvector $v$ (of which the norm may not be $1$) satisfies that 
\[
\max_i|v_i-u(X_i)|=O(n^{-1/2}).
\]
Hence, $v$ will carry similar information as $u$ and can be used for further analysis such as clustering. 
These results provide a theoretical foundation for utilizing the eigenvectors of $L$ in statistical inference. 

In the context of this paper, it is natural to ask whether such a result can be extended to the case only $Z_i=X_i+E_i$ are observed. Intuitively, we will first apply Algorithm \ref{alg:denoise} to obtain the denoised data $\Xhat_i$, and achieve $\Lhat$ correspondingly. Define $\Ahat$ where $\Ahat_{i,j} = k(\Xhat_i,\Xhat_j)$, $\Dhat$ where $\Dhat_{i,i}=\sum_{j=1}^n\Ahat_{i,j}$, and 
\begin{equation}
\hat{L}=(I-\Dhat^{-1/2}\Ahat \Dhat^{-1/2}). 
\end{equation}
We are interested in how different $\Lhat$ would be from $L$, in the sense of matrix $\ell_{\infty}$ norm, the informative eigenvalues, and the corresponding eigenvectors.  The following proposition addresses the result:
\begin{prop}
\label{prop:lap}
Consider a kernel function that Assumption \ref{aspt:kernel} holds. Suppose the denoised matrix follows $\err{\Xhat - X} \leq \epsilon$. Then there is a constant $C > 0$ independent of $n$ and $d$ so that in the $\ell_\infty$ operator-norm
\[
\|\Lhat-L\|_\infty\leq C\epsilon. 
\]
Furthermore, suppose $\mathcal{L}$ has a simple eigenvalue $\lambda_0\neq 1$. Then there are $N_0$, $\epsilon > 0$, and a constant $C > 0$ independent of $n$ and $d$, so that when $n>N_0$, $\epsilon<\epsilon_0$, $L$,$\Lhat$ has eigen-pair $(v,\lambda) $ and $(\hat{v},\hat{\lambda})$ respectively, so that  
\[
|\hat{\lambda}-\lambda|\leq C\epsilon,\quad
\|a\hat{v}-v\|_\infty\leq C\epsilon \|v\|_\infty, 
\]
where $a=1$ or $-1$. 
\end{prop}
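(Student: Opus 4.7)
\emph{Matrix $\ell_\infty$-operator-norm bound.} The plan is first to establish $\|\Lhat-L\|_\infty\le C\epsilon$, and then to deduce the eigenvalue and entrywise eigenvector bounds as consequences via perturbation theory. The Lipschitz property in Assumption~\ref{aspt:kernel} combined with $\max_i\|\Xhat_i-X_i\|\le\epsilon$ yields the entrywise bound $|\Ahat_{ij}-A_{ij}|\le 2L_k\epsilon$, where $L_k$ is the Lipschitz constant of $k$; summing in $j$ gives $|\Dhat_{ii}-D_{ii}|\le 2L_k n\epsilon$, and since $A_{ij},\Ahat_{ij}\in[1/K,K]$ forces $D_{ii},\Dhat_{ii}\in[n/K,nK]$, the mean value theorem yields $|\Dhat_{ii}^{-1/2}-D_{ii}^{-1/2}|\le C\epsilon/\sqrt n$. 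Splitting
\[
\Dhat^{-1/2}\Ahat\Dhat^{-1/2}-D^{-1/2}AD^{-1/2}=(\Dhat^{-1/2}-D^{-1/2})\Ahat\Dhat^{-1/2}+D^{-1/2}(\Ahat-A)\Dhat^{-1/2}+D^{-1/2}A(\Dhat^{-1/2}-D^{-1/2})
\]
into three perturbation-of-one-factor terms and computing the row sums of each summand (each entry is of order $\epsilon/n$, and summing $n$ such entries yields row sums of order $\epsilon$) gives $\|\Lhat-L\|_\infty\le C\epsilon$.

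\emph{Eigenvalue perturbation.} Both $L$ and $\Lhat$ are symmetric, so $\|\Lhat-L\|_2\le\sqrt{\|\Lhat-L\|_1\|\Lhat-L\|_\infty}=\|\Lhat-L\|_\infty\le C\epsilon$. By Theorems~16 and~19 of \cite{von2008consistency}, for $n\ge N_0$ the matrix $L$ possesses a simple eigenvalue $\lambda$ close to $\lambda_0$ with a spectral gap $g>0$ inherited from the gap of $\lambda_0$ in $\sigma(\mathcal L)$. Weyl's inequality then gives $|\lambdahat-\lambda|\le C\epsilon$, and for $\epsilon$ small enough $\lambdahat$ is the unique eigenvalue of $\Lhat$ in the disk of radius $g/2$ around $\lambda$.

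\emph{Entrywise eigenvector perturbation.} Take $\vhat$ with $\|\vhat\|_2=\|v\|_2$ and choose the sign $a\in\{\pm 1\}$ so that $\langle a\vhat,v\rangle\ge 0$. The Davis--Kahan theorem then yields the scale-invariant bound $\|a\vhat-v\|_2\le C\epsilon\|v\|_2/g$. Set $W=I-L$, $\hat W=I-\Lhat$, and $\delta=a\vhat-v$. Multiplying the eigenvector equation $(1-\lambdahat)\vhat=\hat W\vhat$ by $a$, using $a^2=1$, and subtracting $(1-\lambda)v=Wv$ yields
\[
(1-\lambdahat)\delta_i=(\lambdahat-\lambda)v_i+\sum_j(\hat W_{ij}-W_{ij})v_j+\sum_j\hat W_{ij}\delta_j.
\]
The first two terms on the right are bounded in absolute value by $C\epsilon\|v\|_\infty$ via $|\lambdahat-\lambda|\le C\epsilon$ and $\|\hat W-W\|_\infty\le C\epsilon$; for the third, since $\Ahat_{ij}\le K$ and $\Dhat_{ii},\Dhat_{jj}\ge n/K$ force $\hat W_{ij}\le K^2/n$, Cauchy--Schwarz gives
\[
\Bigl|\sum_j\hat W_{ij}\delta_j\Bigr|\le\Bigl(\sum_j\hat W_{ij}^2\Bigr)^{1/2}\|\delta\|_2\le\frac{K^2}{\sqrt n}\cdot\frac{C\epsilon\|v\|_2}{g}\le C\epsilon\|v\|_\infty,
\]
where the last step uses $\|v\|_2\le\sqrt n\|v\|_\infty$. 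Dividing by $|1-\lambdahat|\ge|1-\lambda_0|/2>0$ (bounded below since $\lambda_0\neq 1$) produces the desired $\|a\vhat-v\|_\infty\le C\epsilon\|v\|_\infty$.

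\emph{Main obstacle.} The hardest step is the entrywise eigenvector bound, since Davis--Kahan natively produces only an $\ell_2$ perturbation bound while the statement demands entrywise control scaled by $\|v\|_\infty$. The key observation enabling the $\ell_\infty$ upgrade is that each entry of $\hat W$ scales as $O(1/n)$, so each row of $\hat W$ has $\ell_2$ norm $O(1/\sqrt n)$; this exactly compensates the $\ell_2$-to-$\ell_\infty$ gap in Cauchy--Schwarz so that $\hat W\delta$ is controlled entrywise at level $\epsilon\|v\|_\infty$, matching the two source terms and closing the argument.
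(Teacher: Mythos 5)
Your treatment of the first claim and of the eigenvalue perturbation follows the paper essentially verbatim: entrywise Lipschitz estimate on the kernel, control of the degree matrix, row summation to get $\|\Lhat-L\|_\infty\le C\epsilon$, the interpolation $\|\Lhat-L\|\le\sqrt{\|\Lhat-L\|_1\|\Lhat-L\|_\infty}=\|\Lhat-L\|_\infty$ via symmetry, and Weyl's inequality. For the entrywise eigenvector bound, however, you take a genuinely different and fully self-contained route. The paper imports two black-box perturbation results from \cite{von2008consistency} --- Theorem~7 (an abstract $\ell_\infty$-operator projection perturbation bound applied to $\|v-\hat{P}v\|_\infty$) and Proposition~18 --- and simply plugs in $\|\Lhat-L\|_\infty\le C\epsilon$ and $\|\Lhat\|_\infty\le 1+K^2$. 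You instead derive the bound directly from the eigenvector equation: setting $\hat{W}=I-\Lhat$, $W=I-L$, you isolate the residual $\delta=a\hat{v}-v$, bound the two source terms $(\hat\lambda-\lambda)v$ and $(\hat{W}-W)v$ by the estimates already in hand, and handle the back-coupling term $\hat{W}\delta$ by Cauchy--Schwarz, using $\hat{W}_{ij}\le K^2/n$ so that each row of $\hat{W}$ has $\ell_2$-norm $O(n^{-1/2})$, which exactly cancels the $\sqrt n$ loss in $\|\delta\|_2\le C\epsilon\|v\|_2/g\le C\epsilon\sqrt n\,\|v\|_\infty/g$ coming from Davis--Kahan. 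This is a leave-one-out-flavored $\ell_2\to\ell_\infty$ bootstrap in the same spirit as the paper's main theorem, and it makes explicit the mechanism ($1/\sqrt n$ row-norm decay of the normalized adjacency matrix offsetting the dimension penalty in Cauchy--Schwarz) that von Luxburg's operator-theoretic argument hides. Both approaches are correct; the paper's is shorter by delegating the hard step, while yours is longer but elementary and transparent about where the entrywise control actually comes from.
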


\section{Numerical simulation}
\label{sec:simulation}

To support our theoretical results, we provide some numerical examples in this section. 
We consider a two-class clustering setting where the clean data are sampled from two separate low-dimensional zigzag lines embedded in a high-dimensional space. In detail, we first generate two zigzag lines, each consisting of 10 segments within a unit ball in $\mathbb{R}^3$, as seen in Figure \ref{fig: zig zag 3D}(a). 
Then we embedded them into $d$-dimensional space by an orthonormal matrix $\Omega \in \reals^{d \times 3}$ with $d$ being some large number to be specified later. Denote the embedded zigzag lines as $x_0(t)$ and $x_1(t)$ for $0 \leq t \leq 1$. 
The clean data are generated as 
$X_i = \Omega \, x_{\ell(i)}(t_i)$, where $\ell(i) \sim Bernoulli(1/2)$ and 
$t_i \sim U[0,1]$, $i \in [n]$. Here $\ell(i)$ can be regarded as the label of this sample and our goal is to recover it. 
Finally, we add the noise to the clean data, where $Z_i = X_i + \xi_i$, $\xi_i \sim \mathcal{N}(0, \sigma^2 I_d)$. The algorithm will be applied to the noisy data $Z$. 

We first check the uniform error of PCA-denoised data by Algorithm \ref{alg:denoise}. 
The error in Theorem \ref{thm:linftydavis} requires the information of $\lambda_r(X)$, which is shown to be $O(\sqrt{n})$ in Proposition \ref{prop:zigzag} for the zigzag scenario. 
According to the generation procedure, the clean data $X$ lies in the space with a dimension of $r = 3$, so we calculate $\lambda_3(X)$ for $n$ ranging from $10^2$ to $70^2$. The embedding $\Omega$ does not change the eigenvalue, and $\lambda_3(X) = \lambda_3(x)$ where $x = (x_{\ell(1)}(t_1), \cdots, x_{\ell(n)}(t_n))$. We calculate it based on $x$ so the dimension $d$ does not affect the result. 
As seen in Figure \ref{fig: zig zag 3D}(b), $\lambda_3(X)$ scales linearly with $\sqrt{n}$, which follows Proposition \ref{prop:zigzag}. Such a result indicates the simplified uniform error in Theorem \ref{thm:linftydavis} holds, that $\err{X - \Xhat} = O(\sigma + \sigma^4\log n)$ when $d = c_d n$. 
We fix $n = 0.1d$ and let $d$ range from $10^2$ to $10^4$, where the added noise $\sigma \in \{0.0025,0.05,0.1\}$. 
Figure \ref{fig: zig zag 3D}(c) presents the uniform error of the noisy data $Z$ and the denoised data $\Xhat$ by Algorithm \ref{alg:denoise}. It can be found that $\|\Xhat-X\|_{2,\infty}/\sigma$ stays constant throughout the regime while $\|Z-X\|_{2,\infty}/\sigma$ grows as $\sqrt{d}$. Even in the high-dimensional case that $d = 10n$, the PCA-denoising algorithm can reduce the curse of dimension and reduce the uniform error to be $O(\sigma)$. 



\begin{figure}[ht!]
    \centering
\includegraphics[width=.75\textwidth]{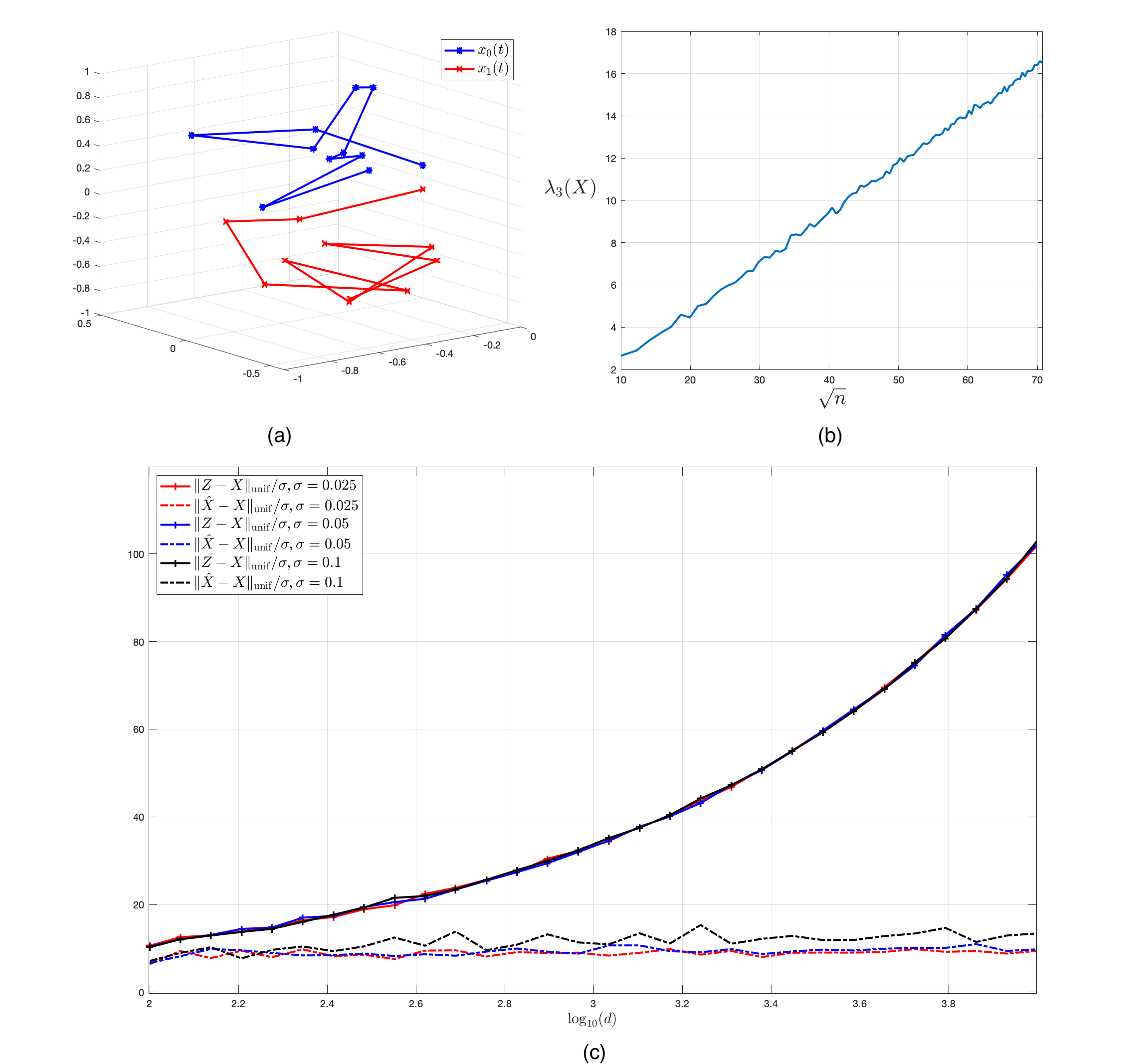}
    \caption{Properties of data generated from two zig-zag lines. (a) The two zigzag lines where the clean data is generated from. They are separated by parallel hyperplanes with a distance of $0.2$. 
    (b) The third singular value of the clean data $X \in \mathbb{R}^{n \times 3}$, which scales linearly with the square root of the sample size $n$. (c) The normalized uniform errors of the noisy data $Z$, denoised data $\hat X$ achieved at different dimension $d.$
    With a fixed standard deviation of the noise $\sigma$, the uniform errors obtained from the noisy data $Z \in \mathbb{R}^{n \times d}$ increase with the dimension $d$ and sample size $n=0.1d$. However, the uniform errors obtained from the denoised data $\hat X$ remain approximately constant. The results are consistent with different standard deviations, $\sigma=0.025, 0.05,$ and $0.1$.}
    \label{fig: zig zag 3D}
\end{figure}


We then investigate the denoising effects on downstream statistical inference, particularly in the graphical Laplacian eigenvectors and spectral clustering. 
Let $L(B)$ denote the graphical Laplacian of any data matrix $B$. We find the eigenvector corresponding to the second smallest eigenvalue, which is called the {\it Fiedler eigenvector}. Denote it as $\eta_2(B)$. 
It is well known that the component signs of $\eta_2(B)$ can be used to infer cluster labels (\cite{von2008consistency}). 
We are interested in the error of the Fiedler eigenvectors from different data sources and the associated clustering results. 

To illustrate the importance of the uniform error, we consider three data matrices in this study: the clean data $X$, the PCA-denoised data $\Xhat$, and a new data set $\Xtilde$ with errors not uniformly distributed. To make the comparison fair, the new matrix $\Xtilde$ has the same average error as $\Xhat$,  i.e.
\begin{equation}
\label{tmp:eqavg}
\frac{1}{n}\sum_{i=1}^n \|X_i-\Xhat_i\|^2=\frac{1}{n}\sum_{i=1}^n \|X_i-\Xtilde_i\|^2.
\end{equation}
We design $\Xtilde$ in a way that some samples have a large error and other samples have no errors. In detail, we randomly select 10\% of indices as $\mathcal{I}$, and define $\Xtilde_i$ as: 
\[
\Xtilde_i=\begin{cases}X_i,\quad &\text{if }i\in \mathcal{I}^c;\\
\alpha X_i+(1-\alpha)Z_i,\quad &\text{if }i\in \mathcal{I},
\end{cases}
\]
where $\alpha\in (0,1)$ is a constant that ensures \eqref{tmp:eqavg}.  
In other words, 90\% of data in $\Xtilde$ are clean; the others have $i.i.d.$ errors as $(1-\alpha)\xi_i$, where $\alpha$ is chosen so that the average error is the same with $\Xhat$. 
Comparisons between $\eta_2(\Xhat)$ and $\eta_2(\Xtilde)$ explain how uniform error is important in practice. 

\begin{figure}[ht!]
    \centering
\includegraphics[width = .9\textwidth]{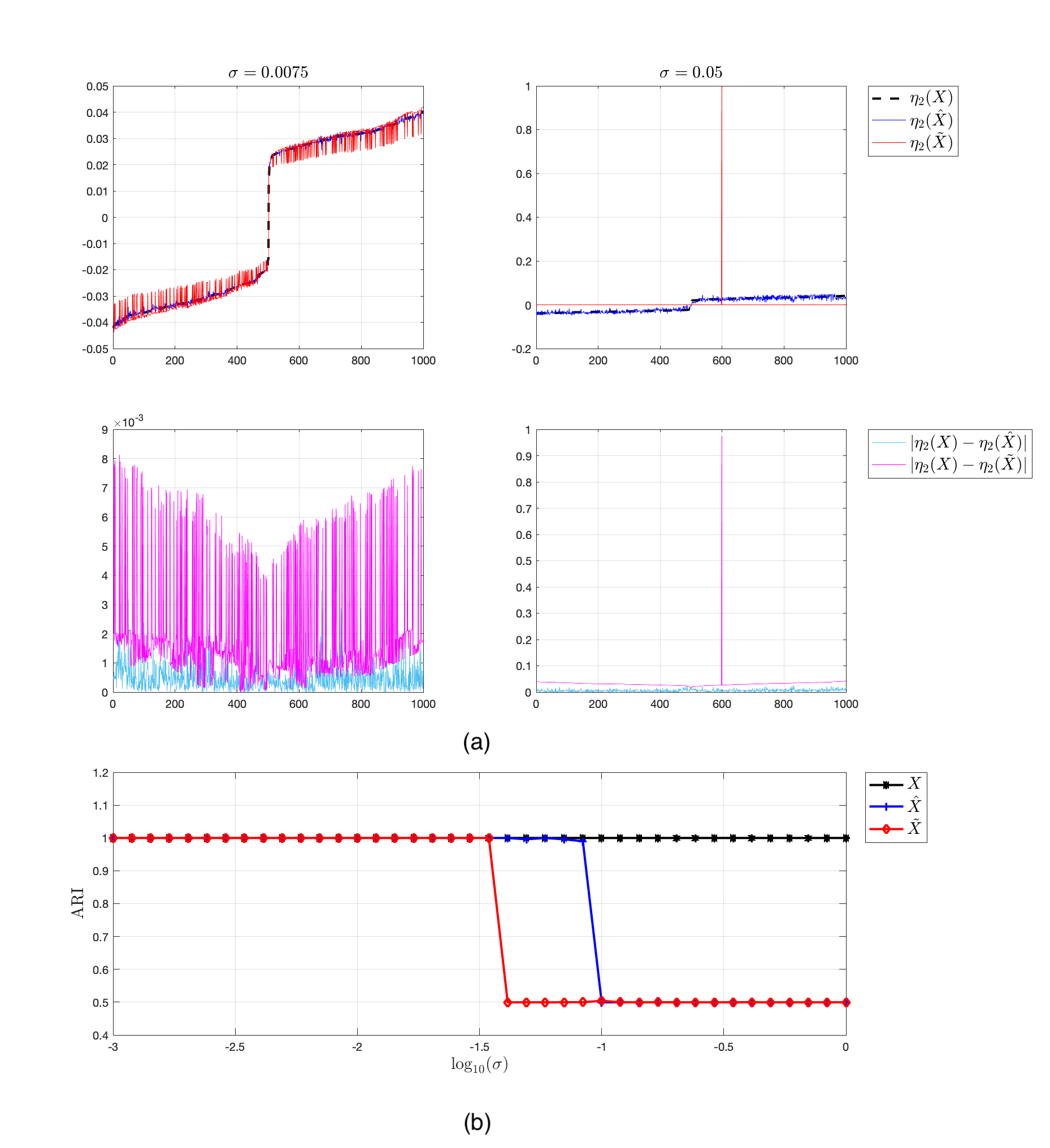}
    \caption{Spectral clustering with different data sources. (a) The Fiedler eigenvectors of the Laplacian formed by the clean data, the denoised data, and the "averaged denoised" data at two noise levels. 
    The first row plots the second (Fiedler) eigenvector of the graphical Laplacian obtained by the clean data $X$,  the denoised data $\hat X$, and averaged denoised data $\tilde X$. They are denoted by $\eta_2(X), \eta_2(\hat X)$, and $\eta_2(\tilde X)$, respectively. The second row plots the pointwise absolute difference between $(\eta_2(X), \eta_2(\hat X))$ and $(\eta_2(X), \eta_2(\tilde X))$. The standard deviation of the noise is chosen to be $\sigma=0.0075$ and $0.05$, corresponding to the two columns. (b) The adjusted rand indices (ARI) by spectral clustering using clean data $X$, denoised data $\hat X$, and the averaged denoised data $\tilde X$ when $\sigma$ changes.}
    \label{fig: spect clust}
\end{figure}

We fix $d=5000$ and $n=1000$. The bandwidth for graphical Laplacian is chosen as $b=\sqrt{0.005}$. 
We first compare the Fiedler eigenvectors from three data matrices $X$, $\Xhat$, and $\Xtilde$ at two noise levels $\sigma = 0.0075$ and $\sigma = 0.05$, shown in Figure \ref{fig: spect clust}(a). We adjust the order of data samples so that samples from $x_0(t)$ come first and those from $x_1(t)$ are all at last.  In the top panel of Figure \ref{fig: spect clust}(a), we can see the sign $\eta_2(X)$ meets the label $\ell$. 
For the two data with small noise $\sigma=0.0075$, $\eta_2(\Xhat)$ and $\eta_2(\Xtilde)$ are close to $\eta_2(X)$ at a large scale. The error of $\eta_2(\Xhat)$ is uniformly small but $\eta_2(\Xtilde)$ has spiked errors. This is consistent with Proposition \ref{prop:lap}. When $\sigma$ increases from $0.0075$ to $0.05$, the spiked error in $\|\Xtilde_i - X_i\|$ is so large that the distance-based clustering algorithm fails. 
Meanwhile, the  Fiedler eigenvector obtained by $\Xhat$ can still preserve the correct cluster labels. 
We then study the clustering results of the three data matrices for $\sigma$ ranging from $10^{-3}$ to $1$, shown in 
Figure \ref{fig: spect clust}(b). We use the adjusted rand index (ARI) to measure the clustering accuracy, where $ARI = 1$ indicates a perfect clustering and $ARI = 0.5$ means a random guess. 
The performance of clustering using other data sources would of course depend on the strength of noise. Not surprisingly, because of the crash in estimating $\eta_2(X)$, using $\Xtilde$ fails when $\sigma \ge 0.05$ and using $\Xhat$ gives a satisfactory clustering result till $\sigma \ge 0.1$. 

\subsection*{Acknowledgments}
The authors would like to thank Yuehaw Khoo for the throughout discussion and reading of this project. The authors also would like to thank the two anonymous referees, an Associate Editor and the Editor for their comments that improved the quality of this paper. 

\subsection*{Funding}
Xin T. Tong's research is supported by Singapore MOE grant Tier-1-A-8000459-00-00. Wanjie Wang's research is supported by Singapore MOE grant Tier-1-A-8001451-00-00. 
Yuguan Wang's research is supported by the Department of Statistics, University of Chicago.

\bibliographystyle{plainnat}
\bibliography{denoising.bib}

\begin{thebibliography}{59}
\providecommand{\natexlab}[1]{#1}
\providecommand{\url}[1]{\texttt{#1}}
\expandafter\ifx\csname urlstyle\endcsname\relax
  \providecommand{\doi}[1]{doi: #1}\else
  \providecommand{\doi}{doi: \begingroup \urlstyle{rm}\Url}\fi

\bibitem[Abbe and Sandon(2015)]{abbe2015community}
Emmanuel Abbe and Colin Sandon.
\newblock Community detection in general stochastic block models: Fundamental
  limits and efficient algorithms for recovery.
\newblock \emph{In the proc. of IEEE FOCS}, pages 670--688, 2015.
\newblock \doi{https://doi.org/10.1109/FOCS.2015.47}.

\bibitem[Abbe et~al.(2020)Abbe, Fan, Wang, and Zhong]{abbe2020entrywise}
Emmanuel Abbe, Jianqing Fan, Kaizheng Wang, and Yiqiao Zhong.
\newblock Entrywise eigenvector analysis of random matrices with low expected
  rank.
\newblock \emph{Ann. Stat.}, 48\penalty0 (3):\penalty0 1452--1474, 2020.
\newblock \doi{https://doi.org/10.1214/19-AOS1854}.

\bibitem[Abdi and Williams(2010)]{abdi2010principal}
Herv{\'e} Abdi and Lynne~J Williams.
\newblock Principal component analysis.
\newblock \emph{Wiley Interdiscip. Rev. Comput. Stat.}, 2\penalty0
  (4):\penalty0 433--459, 2010.
\newblock \doi{https://doi.org/10.1002/wics.101}.

\bibitem[Bao et~al.(2021)Bao, Ding, and Wang]{bao2021singular}
Zhigang Bao, Xiucai Ding, and Ke~Wang.
\newblock Singular vector and singular subspace distribution for the matrix
  denoising model.
\newblock \emph{Ann. Stat.}, 49\penalty0 (1):\penalty0 370--392, 2021.
\newblock \doi{https://doi.org/10.1214/20-AOS1960}.

\bibitem[Bhatia(2013)]{bhatia2013matrix}
Rajendra Bhatia.
\newblock \emph{Matrix Analysis}, volume 169.
\newblock Springer Science \& Business Media, 2013.
\newblock \doi{http://dx.doi.org/10.1007/978-1-4612-0653-8}.

\bibitem[Brunton and Kutz(2022)]{brunton2022data}
Steven~L Brunton and J~Nathan Kutz.
\newblock \emph{Data-driven Science and Engineering: Machine Learning,
  Dynamical Systems, and Control}.
\newblock Cambridge University Press, 2022.
\newblock \doi{https://doi.org/10.1017/9781108380690}.

\bibitem[Bubeck and Luxburg(2009)]{bubeck2009nearest}
S{\'e}bastien Bubeck and Ulrike~von Luxburg.
\newblock Nearest neighbor clustering: A baseline method for consistent
  clustering with arbitrary objective functions.
\newblock \emph{J. Mach. Learn. Res.}, 10:\penalty0 657--698, 2009.
\newblock \doi{http://jmlr.org/papers/v10/bubeck09a.html}.

\bibitem[Cai and Zhang(2018)]{cai2018rate}
T~Tony Cai and Anru Zhang.
\newblock Rate-optimal perturbation bounds for singular subspaces with
  applications to high-dimensional statistics.
\newblock \emph{Ann. Stat.}, 46\penalty0 (1):\penalty0 60--89, 2018.
\newblock \doi{https://doi.org/10.1214/17-AOS1541}.

\bibitem[Cape et~al.(2019{\natexlab{a}})Cape, Tang, and Priebe]{twoinifinity1}
Joshua Cape, Minh Tang, and Carey~E Priebe.
\newblock The two-to-infinity norm and singular subspace geometry with
  applications to high dimensional statistics.
\newblock \emph{Ann. Stat.}, 47\penalty0 (5):\penalty0 2405--2439,
  2019{\natexlab{a}}.
\newblock \doi{https://doi.org/10.1214/18-AOS1752}.

\bibitem[Cape et~al.(2019{\natexlab{b}})Cape, Tang, and Priebe]{twoinifinity2}
Joshua Cape, Minh Tang, and Carey~E Priebe.
\newblock Signal-plus-noise matrix models: eigenvector deviations and
  fluctuations.
\newblock \emph{Biometrika}, 106\penalty0 (1):\penalty0 243--250,
  2019{\natexlab{b}}.
\newblock \doi{https://doi.org/10.1093/biomet/asy070}.

\bibitem[Chen et~al.(2021)Chen, Chi, Fan, Ma, et~al.]{chen2021spectral}
Yuxin Chen, Yuejie Chi, Jianqing Fan, Cong Ma, et~al.
\newblock Spectral methods for data science: A statistical perspective.
\newblock \emph{Found. Trends Mach. Learn.}, 14\penalty0 (5):\penalty0
  566--806, 2021.
\newblock \doi{http://dx.doi.org/10.1561/2200000079}.

\bibitem[Cressie and Wikle(2015)]{cressie2015statistics}
Noel Cressie and Christopher~K Wikle.
\newblock \emph{Statistics for Spatio-temporal Data}.
\newblock John Wiley \& Sons, 2015.
\newblock ISBN ISBN: 978-0-471-69274-4.

\bibitem[Davis and Kahan(1970)]{daviskahan}
Chandler Davis and William~Morton Kahan.
\newblock The rotation of eigenvectors by a perturbation. iii.
\newblock \emph{SIAM J. Numer. Anal.}, 7\penalty0 (1):\penalty0 1--46, 1970.
\newblock \doi{https://doi.org/10.1137/0707001}.

\bibitem[Ding(2020)]{ding2020high}
Xiucai Ding.
\newblock High dimensional deformed rectangular matrices with applications in
  matrix denoising.
\newblock \emph{Bernoulli}, 26\penalty0 (1):\penalty0 387--41, 2020.
\newblock \doi{https://doi.org/10.3150/19-BEJ1129}.

\bibitem[Dong and Tong(2021)]{dong2021replica}
Jing Dong and Xin~T Tong.
\newblock Replica exchange for non-convex optimization.
\newblock \emph{J. Mach. Learn. Res.}, 22\penalty0 (1):\penalty0 7826--7884,
  2021.
\newblock \doi{https://www.jmlr.org/papers/v22/20-697.html}.

\bibitem[Donoho and Gavish(2014)]{donoho2014minimax}
David Donoho and Matan Gavish.
\newblock Minimax risk of matrix denoising by singular value thresholding.
\newblock \emph{Ann. Stat.}, 42\penalty0 (6):\penalty0 2413--2440, 2014.
\newblock \doi{DOI: 10.1214/14-AOS1257}.

\bibitem[Donoho et~al.(2018)Donoho, Gavish, and Johnstone]{donoho2018optimal}
David~L Donoho, Matan Gavish, and Iain~M Johnstone.
\newblock Optimal shrinkage of eigenvalues in the spiked covariance model.
\newblock \emph{Ann. Stat.}, 46\penalty0 (4):\penalty0 1742--1778, 2018.
\newblock \doi{https://doi.org/10.1214/17-AOS1601}.

\bibitem[Donoho et~al.(2000)]{donoho2000high}
David~L Donoho et~al.
\newblock High-dimensional data analysis: The curses and blessings of
  dimensionality.
\newblock \emph{AMS math challenges lecture}, 1\penalty0 (2000):\penalty0
  1--32, 2000.
\newblock \doi{https://api.semanticscholar.org/CorpusID:5293263}.

\bibitem[Fan et~al.(2018)Fan, Wang, and Zhong]{fan2018eigenvector}
Jianqing Fan, Weichen Wang, and Yiqiao Zhong.
\newblock An $l_\infty$ eigenvector perturbation bound and its application to
  robust covariance estimation.
\newblock \emph{J. Mach. Learn. Res.}, 18\penalty0 (207):\penalty0 1--42, 2018.
\newblock \doi{https://jmlr.org/papers/volume18/16-140/16-140.pdf}.

\bibitem[Hartigan and Wong(1979)]{kmeans}
John~A Hartigan and Manchek~A Wong.
\newblock Algorithm as 136: A k-means clustering algorithm.
\newblock \emph{J. R. Stat. Soc. Ser. C Appl. Stat.}, 28\penalty0 (1):\penalty0
  100--108, 1979.
\newblock \doi{http://dx.doi.org/10.2307/2346830}.

\bibitem[Hastie et~al.(2009)Hastie, Tibshirani, Friedman, and
  Friedman]{hastie2009elements}
Trevor Hastie, Robert Tibshirani, Jerome~H Friedman, and Jerome~H Friedman.
\newblock \emph{The Elements of Statistical Learning: Data Mining, Inference,
  and Prediction}, volume~2.
\newblock Springer, 2009.

\bibitem[Hu and Wang(2024)]{hu2023network}
Y~Hu and W~Wang.
\newblock {Network-adjusted covariates for community detection}.
\newblock \emph{Biometrika}, page asae011, 02 2024.
\newblock \doi{10.1093/biomet/asae011}.

\bibitem[Jin and Wang(2016)]{jin2016influential}
Jiashun Jin and Wanjie Wang.
\newblock Influential features pca for high dimensional clustering.
\newblock \emph{Ann. Stat.}, 44\penalty0 (6):\penalty0 2323--2359, 2016.
\newblock \doi{DOI: 10.1214/15-AOS1423}.

\bibitem[Jin et~al.(2017)Jin, Ke, and Wang]{jin2017phase}
Jiashun Jin, Zheng~Tracy Ke, and Wanjie Wang.
\newblock Phase transitions for high dimensional clustering and related
  problems.
\newblock \emph{Ann. Stat.}, 45\penalty0 (5):\penalty0 2151--2189, 2017.
\newblock \doi{DOI: 10.1214/16-AOS1522}.

\bibitem[Johnstone and Lu(2009)]{johnstone2009consistency}
Iain~M Johnstone and Arthur~Yu Lu.
\newblock On consistency and sparsity for principal components analysis in high
  dimensions.
\newblock \emph{JASA}, 104\penalty0 (486):\penalty0 682--693, 2009.
\newblock \doi{https://doi.org/10.1198/jasa.2009.0121}.

\bibitem[Jolliffe(2005)]{jolliffe2005principal}
Ian Jolliffe.
\newblock Principal component analysis.
\newblock \emph{Encyclopedia of Statistics in Behavioral Science}, 2005.
\newblock \doi{https://doi.org/10.1002/0470013192.bsa501}.

\bibitem[Jolliffe(1972)]{jolliffe1972discarding}
Ian~T Jolliffe.
\newblock Discarding variables in a principal component analysis. i: Artificial
  data.
\newblock \emph{J. R. Stat. Soc. Ser. C Appl. Stat.}, 21\penalty0 (2):\penalty0
  160--173, 1972.
\newblock \doi{https://doi.org/10.2307/2346488}.

\bibitem[Khoo et~al.(2024)Khoo, Tong, Wang, and Wang]{khoo2024temporal}
Yuehaw Khoo, Xin~T Tong, Wanjie Wang, and Yuguan Wang.
\newblock Temporal label recovery from noisy dynamical data.
\newblock \emph{arXiv preprint arXiv:2406.13635}, 2024.
\newblock \doi{https://doi.org/10.48550/arXiv.2406.13635}.

\bibitem[Leeb(2021)]{leeb2021matrix}
William~E Leeb.
\newblock Matrix denoising for weighted loss functions and heterogeneous
  signals.
\newblock \emph{SIMODS}, 3\penalty0 (3):\penalty0 987--1012, 2021.
\newblock \doi{https://doi.org/10.1137/20M1319577}.

\bibitem[Mei et~al.(2018)Mei, Bai, and Montanari]{mei2018landscape}
Song Mei, Yu~Bai, and Andrea Montanari.
\newblock The landscape of empirical risk for nonconvex losses.
\newblock \emph{Ann. Stat.}, 46\penalty0 (6A):\penalty0 2747--2774, 2018.
\newblock \doi{https://www.jstor.org/stable/26542881}.

\bibitem[Menard(2002)]{menard2002applied}
Scott Menard.
\newblock \emph{Applied Logistic Regression Analysis}.
\newblock Number 106. Sage, 2002.

\bibitem[Montanari et~al.(2018)Montanari, Ruan, and Yan]{montanari2018adapting}
Andrea Montanari, Feng Ruan, and Jun Yan.
\newblock Adapting to unknown noise distribution in matrix denoising.
\newblock \emph{arXiv preprint arXiv:1810.02954}, 2018.
\newblock \doi{https://doi.org/10.48550/arXiv.1810.02954}.

\bibitem[Nadakuditi(2014)]{nadakuditi2014optshrink}
Raj~Rao Nadakuditi.
\newblock Optshrink: An algorithm for improved low-rank signal matrix denoising
  by optimal data-driven singular value shrinkage.
\newblock \emph{IEEE Trans. Inf. Theory}, 60\penalty0 (5):\penalty0 3002--3018,
  2014.
\newblock \doi{10.1109/TIT.2014.2311661}.

\bibitem[Omran et~al.(2007)Omran, Engelbrecht, and Salman]{omran2007overview}
Mahamed~GH Omran, Andries~P Engelbrecht, and Ayed Salman.
\newblock An overview of clustering methods.
\newblock \emph{Intell. Data Anal.}, 11\penalty0 (6):\penalty0 583--605, 2007.
\newblock \doi{10.3233/IDA-2007-11602}.

\bibitem[O’Rourke et~al.(2024)O’Rourke, Vu, and Wang]{o2023optimal}
Sean O’Rourke, Van Vu, and Ke~Wang.
\newblock Matrices with gaussian noise: Optimal estimates for singular subspace
  perturbation.
\newblock \emph{IEEE Trans. Inf. Theory}, pages 1978--2002, 2024.
\newblock \doi{https://doi.org/10.1109/TIT.2023.3331010}.

\bibitem[Pavlidis and Horowitz(1974)]{pavlidis1974segmentation}
Theodosios Pavlidis and Steven~L Horowitz.
\newblock Segmentation of plane curves.
\newblock \emph{IEEE Trans. Comput.,}, 100\penalty0 (8):\penalty0 860--870,
  1974.
\newblock \doi{10.1109/T-C.1974.224041}.

\bibitem[Pollard(1981)]{pollard1981strong}
David Pollard.
\newblock Strong consistency of $k$-means clustering.
\newblock \emph{Ann. Stat.}, 9\penalty0 (1):\penalty0 135--140, 1981.
\newblock \doi{https://doi.org/10.1214/aos/1176345339}.

\bibitem[Rakhlin and Caponnetto(2006)]{rakhlin2006stability}
Alexander Rakhlin and Andrea Caponnetto.
\newblock Stability of $ k $-means clustering.
\newblock \emph{NeurIPS}, 19, 2006.
\newblock
  \doi{https://proceedings.neurips.cc/paper\_files/paper/2006/file/58191d2a914c6dae66371c9dcdc91b41-Paper.pdf}.

\bibitem[Rao(1964)]{rao1964use}
C~Radhakrishna Rao.
\newblock The use and interpretation of principal component analysis in applied
  research.
\newblock \emph{Sankhya A}, 26\penalty0 (4):\penalty0 329--358, 1964.
\newblock \doi{https://www.jstor.org/stable/25049339}.

\bibitem[Reiss and Wahl(2020)]{AveError}
Markus Reiss and Martin Wahl.
\newblock Nonasymptotic upper bounds for the reconstruction error of pca.
\newblock \emph{Ann. Stat.}, 48\penalty0 (2):\penalty0 1098--1123, 2020.
\newblock \doi{https://doi.org/10.1214/19-AOS1839}.

\bibitem[Rudelson and Vershynin(2013)]{rudelson2013hanson}
Mark Rudelson and Roman Vershynin.
\newblock Hanson-wright inequality and sub-gaussian concentration.
\newblock \emph{Electron. Commun. Probab.}, 18\penalty0 (92):\penalty0 1--9,
  2013.
\newblock \doi{https://doi.org/10.1214/ECP.v18-2865}.
\newblock URL \url{https://doi.org/10.1214/ECP.v18-2865}.

\bibitem[Seber and Lee(2003)]{seber2003linear}
George~AF Seber and Alan~J Lee.
\newblock \emph{Linear Regression Analysis}, volume 330.
\newblock John Wiley \& Sons, 2003.

\bibitem[Shalev-Shwartz and Ben-David(2014)]{shalev2014understanding}
Shai Shalev-Shwartz and Shai Ben-David.
\newblock \emph{Understanding Machine Learning: From Theory to Algorithms}.
\newblock Cambridge university press, 2014.

\bibitem[Shepard(1962)]{shepard1962analysis}
Roger~N Shepard.
\newblock The analysis of proximities: multidimensional scaling with an unknown
  distance function. i.
\newblock \emph{Psychometrika}, 27\penalty0 (2):\penalty0 125--140, 1962.
\newblock \doi{https://doi.org/10.1007/BF02289630}.

\bibitem[Singer(2006)]{singer2006graph}
Amit Singer.
\newblock From graph to manifold laplacian: The convergence rate.
\newblock \emph{Appl. Comput. Harmon. A.}, 21\penalty0 (1):\penalty0 128--134,
  2006.
\newblock \doi{https://doi.org/10.1016/j.acha.2006.03.004}.

\bibitem[Singh and Harrison(1985)]{singh1985standardized}
Ashbindu Singh and Andrew Harrison.
\newblock Standardized principal components.
\newblock \emph{Int. J. Remote Sens}, 6\penalty0 (6):\penalty0 883--896, 1985.
\newblock \doi{https://doi.org/10.1080/01431168508948511}.

\bibitem[Stewart and Sun(1990)]{stewart1990matrix}
Gilbert~W Stewart and Ji-guang Sun.
\newblock \emph{Matrix Perturbation Theory}.
\newblock Academic Press, 1990.

\bibitem[Stone(1961)]{stone1961approximation}
Henry Stone.
\newblock Approximation of curves by line segments.
\newblock \emph{Math. Comput.}, 15\penalty0 (73):\penalty0 40--47, 1961.
\newblock \doi{https://api.semanticscholar.org/CorpusID:121847482}.

\bibitem[Tao(2012)]{tao2023topics}
Terence Tao.
\newblock \emph{Topics in Random Matrix Theory}, volume 132.
\newblock American Mathematical Society, 2012.

\bibitem[Tong et~al.(2024)Tong, Wang, and Wang]{supp}
Xin~T. Tong, Wanjie Wang, and Yuguan Wang.
\newblock Supplementary material of "uniform error bound for pca matrix
  denoising".
\newblock 2024.

\bibitem[Tukey(1960)]{tukey1960survey}
John~W Tukey.
\newblock A survey of sampling from contaminated distributions. contributions
  to probability and statistics.
\newblock \emph{Essays in honor of Harold Hotelling}, 1960.

\bibitem[Vershynin(2010)]{vershynin}
Roman Vershynin.
\newblock Introduction to the non-asymptotic analysis of random matrices.
\newblock \emph{arXiv preprint arXiv:1011.3027}, 2010.
\newblock \doi{https://doi.org/10.48550/arXiv.1011.3027}.

\bibitem[Von~Luxburg(2007)]{von2007tutorial}
Ulrike Von~Luxburg.
\newblock A tutorial on spectral clustering.
\newblock \emph{Stat. Comput.}, 17:\penalty0 395--416, 2007.
\newblock \doi{https://doi.org/10.1007/s11222-007-9033-z}.

\bibitem[Von~Luxburg et~al.(2008)Von~Luxburg, Belkin, and
  Bousquet]{von2008consistency}
Ulrike Von~Luxburg, Mikhail Belkin, and Olivier Bousquet.
\newblock Consistency of spectral clustering.
\newblock \emph{Ann. Stat.}, 36\penalty0 (2):\penalty0 555--586, 2008.
\newblock \doi{10.1214/009053607000000640}.

\bibitem[Vu(2011)]{vu2011singular}
Van Vu.
\newblock Singular vectors under random perturbation.
\newblock \emph{Random Struct. Algor.}, 39\penalty0 (4):\penalty0 526--538,
  2011.
\newblock \doi{https://doi.org/10.1002/rsa.20367}.

\bibitem[Wahba et~al.(1999)]{wahba1999support}
Grace Wahba et~al.
\newblock Support vector machines, reproducing kernel hilbert spaces and the
  randomized gacv.
\newblock In \emph{Advances in Kernel Methods-Support Vector Learning},
  volume~6, pages 69--87. Citeseer, 1999.
\newblock \doi{10.7551/mitpress/1130.003.0009}.

\bibitem[Wedin(1972)]{wedin1972perturbation}
Per-{\AA}ke Wedin.
\newblock Perturbation bounds in connection with singular value decomposition.
\newblock \emph{BIT}, 12:\penalty0 99--111, 1972.
\newblock \doi{https://doi.org/10.1007/BF01932678}.

\bibitem[Yu et~al.(2015)Yu, Wang, and Samworth]{yu2015useful}
Yi~Yu, Tengyao Wang, and Richard~J Samworth.
\newblock A useful variant of the davis--kahan theorem for statisticians.
\newblock \emph{Biometrika}, 102\penalty0 (2):\penalty0 315--323, 2015.
\newblock \doi{https://doi.org/10.1093/biomet/asv008}.

\bibitem[Zhang et~al.(2022)Zhang, Cai, and Wu]{zhang2022heteroskedastic}
Anru~R Zhang, T~Tony Cai, and Yihong Wu.
\newblock Heteroskedastic pca: Algorithm, optimality, and applications.
\newblock \emph{Ann. Stat.}, 50\penalty0 (1):\penalty0 53--80, 2022.
\newblock \doi{https://doi.org/10.1214/21-AOS2074}.

\end{thebibliography}

\newpage
\appendix

\section{Detailed verification in Theorem \ref{thm:linftydavis}}
\begin{lemma}
Suppose the settings of Theorem \ref{thm:linftydavis} hold. 
If we fix $\lambda_r \geq c_X \sqrt{n}$ and $d\gtrsim n$,
\begin{align*}
&
\frac{\sigma(\sqrt{n}+\sqrt{d})}{\lambda_r}+\frac{\sigma n\sqrt{\log n}}{\lambda_r^2} +\frac{1}{\lambda_r^2}\Gamma(1+\sigma(\sqrt{d}+\sqrt{\log n}))\\
&\lesssim \sqrt{\frac{d}{n}}\sigma(1 +\frac{\sigma^3 d}{n}\sqrt{\log n} +\frac{\sigma^4 d}{n}\log n)+\sqrt{\log n}\sigma.
\end{align*}
Otherwise, if we fix $\lambda_r \geq c_X \sqrt{n}$ and $d\lesssim n$, 
\begin{align*}
\frac{\sigma}{\lambda_r^2} (\sqrt{nd}+n\sqrt{\log n}) +\frac{1}{\lambda_r^2}\Gamma(1+\sigma(\sqrt{d}+\sqrt{\log n}))\lesssim
\sqrt{\log n}\sigma(1+\sigma^4 \sqrt{\frac{d\log n}{n}}).
\end{align*}

\end{lemma}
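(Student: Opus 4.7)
The plan is to substitute the bound $\lambda_r \asymp \sqrt n$ (which follows from $c_X\sqrt n\leq\lambda_r\leq\|X\|\leq\sqrt n$) into every summand on the left-hand side, expand the products involving $\Gamma$, and bound each resulting monomial by a summand of the right-hand side. A single auxiliary inequality drives the bookkeeping: the spectral gap condition \eqref{eqn:specgap} combined with $\lambda_r\leq\sqrt n$ forces $\sigma\sqrt d\lesssim\sqrt n$, i.e.\ $\sigma^2 d\lesssim n$. Together with the standing assumption $\sigma\leq 1$, this is essentially the only non-trivial ingredient in the simplification.

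For the first case, $d\gtrsim n$, the two scalar terms reduce immediately: $\sigma(\sqrt n+\sqrt d)/\lambda_r\lesssim\sigma\sqrt{d/n}$ and $\sigma n\sqrt{\log n}/\lambda_r^2\lesssim\sigma\sqrt{\log n}$, both of which already appear on the right-hand side. For the $\Gamma$-term I would first factor
\[
\frac{\Gamma}{\lambda_r^2}\;\lesssim\;\frac{\sigma\sqrt{\log n}}{n}\,(1+\sigma\sqrt{\log n})\Bigl(1+\frac{\sigma^2 d}{\sqrt n}\Bigr),
\]
using $d\gtrsim n$ to absorb the $\sigma^2/\sqrt n$ contribution into $\sigma^2 d/n^{3/2}$. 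Multiplying by $1+\sigma(\sqrt d+\sqrt{\log n})$ and distributing produces a finite list of monomials of the shape $\sigma^a d^b(\log n)^c n^{-e}$; each can be matched either to $\sigma\sqrt{\log n}$, to $\sigma^4(d/n)^{3/2}\sqrt{\log n}$, or to $\sigma^5(d/n)^{3/2}\log n$, using $\sigma\sqrt d\lesssim\sqrt n$, $\sigma\leq 1$, and $\sqrt{\log n}\lesssim\sqrt d$ for $n$ large.

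The second case, $d\lesssim n$, is structurally the same but shorter. Now $\sigma\sqrt{nd}/\lambda_r^2\lesssim\sigma$ and $\sigma n\sqrt{\log n}/\lambda_r^2\lesssim\sigma\sqrt{\log n}$, and inside $\Gamma$ the factor $n+\sigma^2 n^{3/2}+\sigma^2 d\sqrt n$ simplifies via $\sigma^2 d\sqrt n\leq\sigma^2 n^{3/2}$ to $n(1+\sigma^2\sqrt n)$. Expanding $(1+\sigma^2\sqrt n)(1+\sigma\sqrt{\log n})(1+\sigma\sqrt d+\sigma\sqrt{\log n})$ yields a handful of monomials, every one of which except $\sigma^5\sqrt{d/n}\log n$ falls beneath $\sigma\sqrt{\log n}$ after repeated use of $\sigma\sqrt d\lesssim\sqrt n$ and $\sigma\leq 1$; the exceptional monomial matches the remaining right-hand side term exactly.

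The main obstacle is not a single deep step but the bookkeeping: each case unfolds into roughly a dozen monomials that must be individually compared against the four right-hand side terms. The implicit constraint $\sigma^2 d\lesssim n$ inherited from \eqref{eqn:specgap} is what prevents these comparisons from blowing up, and its repeated application is the one recurring idea of the argument.
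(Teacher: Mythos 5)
Your proposal is correct and follows essentially the same route as the paper's own verification: substitute $\lambda_r\asymp\sqrt n$ (valid because $c_X\sqrt n\leq\lambda_r\leq\|X\|\leq\sqrt n$), exploit the spectral-gap inequality $\sigma\sqrt d\lesssim\lambda_r\leq\sqrt n$ together with $\sigma\leq 1$ as the driving bound, expand the $\Gamma$-product into monomials, and compare each against the four right-hand side terms, with $\sqrt{\log n}\lesssim\sqrt d$ used in Case 1 and $\sigma^2 d\sqrt n\leq\sigma^2 n^{3/2}$ used in Case 2 to pre-simplify the bracket. The paper does the same monomial accounting (roughly twelve terms per case, with the single surviving term $\sigma^5\sqrt{d/n}\,\log n$ in Case 2 matching $\sigma\sqrt{\log n}\cdot\sigma^4\sqrt{d\log n/n}$), so there is no meaningful divergence in method.
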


\begin{proof}
Recall that  $\sqrt{n}\geq \lambda_r\geq 1+C_1\sigma(\sqrt{n}+\sqrt{d}),$
\[
\frac{d\sigma^2}{\lambda_r^2}\leq 
\frac{\sigma \sqrt{nd}}{C_1\lambda^2_r},\quad \frac{d\sigma^2}{\lambda_r^2}\leq 
\frac{\sigma\sqrt{d}}{C_1\lambda_r},\quad 
\frac{\sigma}{\lambda_r^2} \sqrt{n\log n}\leq \frac{\sigma}{\lambda_r^2} n\sqrt{\log n},\quad \frac{\sigma}{\lambda_r^2}\sigma n\leq \frac{\sigma}{\lambda_r^2} n\sqrt{\log n}.
\]
If we fix $\lambda_r \geq c_X \sqrt{n}$ and $d\gtrsim n$,
\begin{align*}
&
\frac{\sigma(\sqrt{n}+\sqrt{d})}{\lambda_r}+\frac{\sigma n\sqrt{\log n}}{\lambda_r^2} +\frac{1}{\lambda_r^2}\Gamma(1+\sigma(\sqrt{d}+\sqrt{\log n}))\\
&\lesssim 
\frac{\sqrt{d}\sigma }{\lambda_r}+\frac{\sigma n\sqrt{\log n}}{\lambda_r^2} +\frac{\sigma^2}{\lambda_r^4}\sqrt{\log n}(n\sqrt{d}+\sigma^2n\sqrt{nd}+\sigma^2 d\sqrt{nd})(1+\sigma\sqrt{\log n})+\frac{\Gamma}{\lambda_r^2}\\
&\lesssim 
\sqrt{\frac{d}{n}}\sigma+\sqrt{\log n}\sigma +\frac{\sigma^2}{n^2}\sqrt{\log n}(n\sqrt{d}+\sigma^2 d\sqrt{nd})(1+\sigma\sqrt{\log n})+\frac{\Gamma}{n}\\
&\lesssim
\sqrt{\frac{d}{n}}\sigma +\sqrt{\log n}\sigma+\frac{\sigma^2}{n}\sqrt{\log n}\sqrt{d}+\frac{\sigma^2}{n^2}\sqrt{nd\log n}\sigma^2 d+
\frac{\sigma^3}{n}\log n\sqrt{d}+\frac{\sigma^3}{n^2}\log n \sigma^2 d\sqrt{nd}+\frac{\Gamma}{n}\\
&\lesssim
\sqrt{\frac{d}{n}}\sigma(1 +\frac{\sigma}{\sqrt{n}}\sqrt{\log n}+\frac{\sigma^3 d}{n}\sqrt{\log n}+
\frac{\sigma^2}{\sqrt{n}}\log n +\frac{\sigma^4 d}{n}\log n )+\sqrt{\log n}\sigma+\frac{\Gamma}{n}\\
&\lesssim \sqrt{\frac{d}{n}}\sigma(1 +\frac{\sigma^3 d}{n}\sqrt{\log n} +\frac{\sigma^4 d}{n}\log n)+\sqrt{\log n}\sigma+\frac{\sigma}{n^2}\sqrt{\log n}(n+\sigma^2 d\sqrt{n})(1+\sigma\sqrt{\log n})\\
&\lesssim \sqrt{\frac{d}{n}}\sigma(1 +\frac{\sigma^3 d}{n}\sqrt{\log n} +\frac{\sigma^4 d}{n}\log n)+\sqrt{\log n}\sigma+\frac{\sigma}{n^2}\sqrt{\log n}(n+\sigma^2 d\sqrt{n}+\sigma\sqrt{\log n}+\sigma^3 d\sqrt{n\log n})\\
&\lesssim \sqrt{\frac{d}{n}}\sigma(1 +\frac{\sigma^3 d}{n}\sqrt{\log n} +\frac{\sigma^4 d}{n}\log n)+\sqrt{\log n}\sigma.
\end{align*}
Similarly, if we fix $\lambda_r \geq c_X \sqrt{n}$ and $d\lesssim n$, 
\begin{align*}
&\frac{\sigma}{\lambda_r^2} (\sqrt{nd}+n\sqrt{\log n}) +\frac{1}{\lambda_r^2}\Gamma(1+\sigma(\sqrt{d}+\sqrt{\log n}))\\
&\lesssim 
\sqrt{\log n}\sigma
+\frac{\sigma^2}{n^2}\sqrt{\log n}(n+\sigma^2 n\sqrt{n})(1+\sigma\sqrt{\log n})(\sqrt{d}+\sqrt{\log n})+\frac{\Gamma}{n}\\
&\lesssim
\sqrt{\log n}\sigma(1+
\frac{\sigma}{n}(1+\sigma^2\sqrt{n})(1+\sigma\sqrt{\log n})(\sqrt{d}+\sqrt{\log n}))\\
&= 
\sqrt{\log n}\sigma\bigg(1+
\frac{\sigma\sqrt{d}}{n}
+\frac{\sigma^3\sqrt{d}}{\sqrt{n}}
+\frac{\sigma^2\sqrt{d\log n}}{n}
+\frac{\sigma^4\sqrt{d\log n}}{\sqrt{n}}\\
&\quad \qquad+\frac{\sigma\sqrt{\log n}}{n}
+\frac{\sigma^3\sqrt{\log n}}{\sqrt{n}}
+\frac{\sigma^2\log n}{n}
+\frac{\sigma^4\log n}{\sqrt{n}}\bigg)+\frac{\Gamma}{n}\\
&\lesssim
\sqrt{\log n}\sigma(1+\sigma^4 \sqrt{\frac{d\log n}{n}})+\frac{\sigma}{n^2}\sqrt{\log n}(n+\sigma^2 n\sqrt{d}+\sigma\sqrt{\log n}+\sigma^3 n\sqrt{d\log n})\\
&\lesssim
\sqrt{\log n}\sigma(1+\sigma^4 \sqrt{\frac{d\log n}{n}}).
\end{align*}

\end{proof}

\section{Proofs for zigzag line data objects}

\begin{lemma}
\label{lem:minsing}
Consider $A\in \reals^{l\times m},B\in \reals^{m\times n}$, Suppose $l\geq \max\{m,n\}$ then  
\[
\lambda_{\min \{m,n\}}(AB)\geq \lambda_{m}(A)\lambda_{\min \{m,n\}}(B).
\]
\end{lemma}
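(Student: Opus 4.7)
The plan is to use the variational (Courant--Fischer) characterization of singular values,
\[
\lambda_k(M) \;=\; \max_{\substack{V \subset \reals^q \\ \dim V = k}} \; \min_{x \in V,\,\|x\|=1} \|Mx\|, \qquad M\in\reals^{p\times q},\; k\le q,
\]
together with the basic observation that when $A\in\reals^{l\times m}$ with $l\ge m$, one has $\|Ay\|\ge \lambda_m(A)\|y\|$ for every $y\in\reals^m$. Set $k=\min\{m,n\}$. The goal reduces to exhibiting a $k$-dimensional subspace $V\subset\reals^n$ on which $\|Bx\|\ge \lambda_k(B)\|x\|$; then applying the lower bound for $A$ to $y=Bx\in\reals^m$ (allowed since $l\ge m$) yields $\|ABx\|\ge \lambda_m(A)\lambda_k(B)\|x\|$, and the variational formula above with this $V$ closes the argument.

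The construction of $V$ splits into the two cases dictated by $k$. If $n\le m$, then $k=n$, $B$ is tall or square, and I can simply take $V=\reals^n$: by definition of the smallest singular value, $\|Bx\|\ge \lambda_n(B)\|x\|$ for every $x\in\reals^n$, and the argument above gives $\lambda_n(AB)\ge \lambda_m(A)\lambda_n(B)$ directly. If $m\le n$, then $k=m$ and $B$ is fat or square; here I take $V$ to be the row space of $B$. Using the thin SVD $B=U\Sigma V_B^T$ with $V_B\in\reals^{n\times m}$ orthonormal, any $x\in V$ can be written $x=V_B a$ with $\|a\|=\|x\|$, and then $Bx = U\Sigma a$ satisfies $\|Bx\|=\|\Sigma a\|\ge \lambda_m(B)\|a\|=\lambda_m(B)\|x\|$. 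Provided $\lambda_m(B)>0$ the dimension of this $V$ is exactly $m$, as required; the case $\lambda_m(B)=0$ (and likewise $\lambda_m(A)=0$) is trivial because the right-hand side of the claimed inequality vanishes.

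There is no serious obstacle here; the main thing to be careful about is checking that the subspace chosen in the second case genuinely has dimension $k=m$, which is exactly where the row space of $B$ and the positivity of $\lambda_m(B)$ enter. The hypothesis $l\ge\max\{m,n\}$ is used in two distinct places: $l\ge m$ gives the lower bound $\|Ay\|\ge \lambda_m(A)\|y\|$ on all of $\reals^m$, and $l\ge n$ ensures that the index $k=\min\{m,n\}$ is a legitimate singular-value index for $AB\in\reals^{l\times n}$, so that the variational formula is applicable with a $k$-dimensional test subspace.
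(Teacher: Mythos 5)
Your proof is correct, and it handles the same two cases as the paper does (the paper also splits on whether $m\geq n$ or $m\leq n$). The difference is in the machinery: the paper works through Gram matrices, writing $\lambda_{\min\{m,n\}}(AB)^2$ as $\lambda_n(B^TA^TAB)$ or $\lambda_m(ABB^TA^T)$ and then applying eigenvalue monotonicity under the Loewner order ($A^TA\succeq\lambda_m(A)^2I_m$ in the first case, $BB^T\succeq\lambda_m(B)^2I_m$ in the second), whereas you invoke the Courant--Fischer variational characterization of singular values directly and build a test subspace $V$ by hand. The two are mathematically equivalent --- eigenvalue monotonicity is itself a corollary of Courant--Fischer --- but your version is somewhat more self-contained since it does not pass through symmetric products, and it makes the role of the hypothesis $l\geq\max\{m,n\}$ more visible (both where $\|Ay\|\geq\lambda_m(A)\|y\|$ is used and where the index $k$ is legitimate for $AB$). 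The paper's version is shorter because it delegates the work to known spectral inequalities for positive semidefinite matrices. One minor note: in your last paragraph you credit $l\geq n$ with making $k=\min\{m,n\}$ a valid singular-value index for $AB\in\reals^{l\times n}$; actually $l\geq m$ alone already gives $\min\{l,n\}\geq\min\{m,n\}=k$, so that role is really played by $l\geq m$ rather than $l\geq n$ --- harmless here since both hold, but worth stating precisely.
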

\begin{proof}
Denote the SVD of $B$ as $B=U_B \Lambda_B V_B^T$. We discuss the case $m \geq n$ and $m \leq n$ separately. 

Suppose $m\geq n$, then 
\[
\lambda_{n}(AB)=\sqrt{\lambda_n(B^TA^TAB)}
\geq \lambda_m(A)\sqrt{\lambda_n(B^TB)}=\lambda_m(A)\lambda_n(B).
\]
Suppose $m\leq n$, then let $U_B\in \reals^{m\times m}, \Lambda_B\in  \reals^{m\times m}$, and $V_B\in \reals^{n\times m}$. We see 
\[
\lambda_{m}(AB)
= \sqrt{\lambda_m(ABB^TA^T)}\geq \lambda_m(B)\sqrt{\lambda_m(AA^T)}=\lambda_m(A)\lambda_m(B).
\]
\end{proof}
\begin{proof}[Proof of Proposition \ref{prop:zigzag}]
According to the model, we rewrite $X^T_i = x_0^T+T^T_i W$, where $x_0 = \E[X_i]$ is a linear combination of $\{v_1, v_2, \cdots, v_R\}$, $T^T_i \in \reals^{1\times R} $, and $W \in \reals^{R\times d}$ with the $i$-th row being $v_i^T$. 
The vector $T_i$ is generated by $s_i \sim Unif(0,1)$, which can be further written as 
\[
T_{i,j}=\begin{cases}
t_{j+1} - t_{j},\quad &s_i \geq t_{j+1}; \\
s_i-t_j,\quad & t_j < s_i \leq t_{j+1};\\
0,\quad & s_i < t_{j}.
\end{cases}
\]
Therefore, $\sum_{j}T_{i,j}=s_i \leq 1$.

Consider the largest eigenvalue first. Since all $v_i$'s have unit norm and $s_i \sim Unif(0,1)$, so $\|X_i\| \leq 1$. Hence, $\|X\| \leq \sqrt{n}$ by Theorem \ref{thm:eigenvalue}.

Now consider the smallest singular value. Again, by Theorem \ref{thm:eigenvalue}, we only need to show the $r$-th eigenvalue of $cov(X_i)$ is non-zero. 
Note that 
\[\text{cov}(X_i)=\text{cov}(W^TT_i)
=W^T\text{cov}(T_i) W. 
\]
Rewrite $W = SQ$, where $Q \in \reals^{r \times d}$ contains the basis of the $r$-dimensional subspace spanned by $\{v_1, v_2, \cdots, v_R\}$ and $S \in \reals^{R \times r}$ contains the linear coefficients so that each row of $SQ$ is $v_i^T$. Hence, $QQ^T = I_r$ and $S$ has rank $r$. 
By Lemma \ref{lem:minsing}  we have 
\[
\lambda_r(\text{cov}(X_i))\geq 
\lambda_R(\text{cov}(T_i)) \lambda_r^2(S)\lambda_r^2(Q) = \lambda_R(\text{cov}(T_i)) \lambda_r^2(S). 
\]
Since $W$ has rank $r$, the $r$-th singular value of $S$ is non-zero. Hence, it suffices to show $\lambda_R(\text{cov}(T_i))>0$. 

Now we investigate the $R$-th singular value of $\cov(T_i)$, where 
\[
\lambda_R(\cov(T_i))=\min\nolimits_{a\in \reals^R, \|a\|=1} \text{var}[T^T_ia].  
\]
Consider a vector $a$ with $\|a\|=1$. Then $T_i^Ta$ is  
\[
T^T_ia=(s_i-t_j)a_j+\sum_{k<j} (t_{k+1} - t_{k}) a_k,\quad t_j < s_i \leq t_{j+1} .
\]
Denote $m=\E[T^T_ia]$ and $b(j)=\sum_{k<j} (t_{k+1} - t_{k}) a_k-m$. Further define  
\begin{align*}
m_1(j)=\E[1_{s_i\in [t_j,t_{j+1}]}(s_i-t_j)]={(t_{j+1}-t_j)^2}/{2},\\
m_2(j)=\E[1_{s_i\in [t_j,t_{j+1}]}(s_i-t_j)^2]={(t_{j+1}-t_j)^3}/{3}.
\end{align*}
Introduce these terms into $\text{var}(T_i^Ta)$ and we have 
\begin{align*}
  \text{var}[T'_ia]& = \sum_j\E[1_{s_i\in [t_j,t_{j+1}]} [(s_i-t_j)a_j+b(j)]^2]\\
  &= \sum_j (b(j)^2+2b(j) m_1(j)a_j+ m_2(j) a^2_j)\\
  &\geq \sum_j (m_2(j)-(m_1(j))^2) a_j^2\\
  &\geq \min_{j\in [R]} [m_2(j)-(m_1(j))^2]\geq \frac{\rho^3}{3}-\frac{\rho^4}{4}.
\end{align*}
The first inequality is obtained by optimizing over all possible $b(j)$. The second inequality is obtained since $\sum_j |a_j|^2=1$. The last inequality is obtained because $f(x)={x^3}/3-{x^4}/{4}$ is increasing on $x\in [0,1]$. 

So, the result is proved by taking $c = \lambda_r(S)/2$. 
\end{proof}

\section{Proof for the statistical lower bound}
\begin{proof}[Proof of Theorem \ref{thm:lowerbound}] 
Consider any estimator $\Xhat$ based on $Z_{[n]}$ and denote it as $\Xhat(Z_{[n]})$. We first set up the bound for the noise level $\sigma$ and then discuss the sample size $n$. 

Conditional on $v$, $X_i$ and $Z_i$ are independent of $Z_{[n]/i}$. So we consider a new problem where both $Z_{[n]}$ and $v$ are known and the same estimator $\Xhat_i(Z_{[n]})$. By the Blackwell  thereom, the estimator can be improved by 
\[
\tilde{X}_i(v,Z_i)= \E[\Xhat_i(Z_{[n]})|v, Z_i]
\]
in the sense that 
\begin{equation}\label{eqn:allerror}
\E[\|\tilde{X}_i(v,Z_i)-X_i\|^2]\leq \E[\|\Xhat_i(Z)-X_i\|^2]\leq \epsilon^2.     
\end{equation}

Recall that the $\ell_2$ error of $\tilde{X}_i(v,Z_i)$ is larger than the Bayes error. We consider the Bayes error. Since $v$ is known, the estimation is $\Xhat_i = (\hat{t}_i+1) v$ and the uncertainty comes from $\hat{t}_i$ only. 
Let $P$ be the projection onto the direction of $v$ and $P_\bot$ the projection onto the complementary subspace. For notational simplicity, let $z_i = Z_i^T v/\|v\|$ and $e_i = \xi_i^T v/\|v\|$. The Bayes estimator of $t_i$ is given by 
\begin{align*}
\hat{t}_i&=\frac{\int^1_0 t\exp(-\tfrac{1}{2\sigma^2}\|Z_i-(t+1)v\|^2) dt}{\int^1_0 \exp(-\tfrac{1}{2\sigma^2}\|Z_i-(t+1)v\|^2) dt}\\
&=\frac{\int^1_0 t\exp(-\tfrac{1}{2\sigma^2}\|(z_i/\|v\|)v-(t+1)v\|^2) \exp(-\tfrac{1}{2\sigma^2}\|P_\bot Z_i\|^2)dt}{\int^1_0 \exp(-\tfrac{1}{2\sigma^2}\|(z_i/\|v\|)v-(t+1)v\|^2)\exp(-\tfrac{1}{2\sigma^2}\|P_\bot Z_i\|^2) dt}\\
&=\frac{\int^1_0 t\exp(-\tfrac{\|v\|^2}{2\sigma^2}|(t+1)-z_i/\|v\||^2) dt}{\int^1_0 \exp(-\tfrac{\|v\|^2}{2\sigma^2}|(t+1)-z_i/\|v\||^2) dt}
\\
&=\frac{\int^1_0 ((t+1)-z_i/\|v\|)\exp(-\tfrac{\|v\|^2}{2\sigma^2}|(t+1)-z_i/\|v\||^2) dt}{\int^1_0 \exp(-\tfrac{\|v\|^2}{2\sigma^2}|(t+1)-z_i/\|v\||^2) dt}
+\frac{z_i}{\|v\|}-1\\
&=\frac{z_i}{\|v\|}-1+\frac{\sigma^2}{\|v\|^2}\frac{\phi(\sigma^{-1}(2\|v\|-z_i))-\phi(\sigma^{-1}(\|v\|-z_i))}{\Phi(\sigma^{-1}(2\|v\|-z_i))-\Phi(\sigma^{-1}(\|v\|-z_i))}\\
&=t_i + \frac{e_i}{\|v\|}+\frac{\sigma^2}{\|v\|^2}\frac{\phi(\sigma^{-1}(2\|v\|-z_i))-\phi(\sigma^{-1}(\|v\|-z_i))}{\Phi(\sigma^{-1}(2\|v\|-z_i))-\Phi(\sigma^{-1}(\|v\|-z_i))}
\end{align*}
The estimation error $|\hat{t}_i - t_i|$ consists of $e_i/\|v\|$ and a complicated term. We now bound it. 
Consider a set $B=\{(t_i,e_i): e_i (t_i-0.5)>0, |e_i|>\sigma\}$, then $P(B) = \Phi(-1)/2$, where $\Phi$ is the CDF of standard Gaussian.  If $t_i>0.5$, $e_i>0$, then $z_i>1.5\|v\|$ and so $\phi(\sigma^{-1}(2\|v\|-z_i))-\phi(\sigma^{-1}(\|v\|-z_i))>0$. It suggests that  
\begin{equation}\label{eqn:bayes1}
|\hat{t}_i-t_i|\geq {|e_i|}/{\|v\|}>{\sigma}/{\|v\|}.     
\end{equation}
With a similar analysis, \eqref{eqn:bayes1} still holds when $t_i<0.5$ and $\xi_i<0$. 
As a conclusion, $|\hat{t}_i-t_i| \geq \sigma/\|v\|$ at the occurrence of $B$.

Further if  $\|v\| \leq 2$, which happens with probability at least $3/4$ by Markov inequality, then 
\[
|\hat{t}_i-t_i|\geq \sigma/2,
\]
and the error follows 
\[
\E[\|\hat{t}_i-t_i\|^2] \geq \E[\|\hat{t}_i-t_i\|^21_{B}]\geq \frac18\sigma^2P(B) = \frac{\Phi(-1)}{16}\sigma^2. 
\] 
Combine it with \eqref{eqn:allerror}, we need $\sigma \leq 4\epsilon/\sqrt{\Phi(-1)}$.

Next we consider the sample size $n$. Here we consider a new problem where both $t_{[n]}$ and $Z_{[n]}$ are observed but $v$ is unknown. Then for any estimator $\Xhat_i(Z_{[n]})$, we can design an estimator of $v$ as
\[
\hat{v}=\frac{1}{n}\sum_{i=1}^n  \frac{\Xhat_i(Z_{[n]})}{t_i+1}. 
\]
Then the error is bounded by 
\[
\E \|\vhat-v\|^2 =  
\E \left\|\frac{1}{n}\sum_{i=1}^n\frac{\Xhat_i(Z_{[n]})-X_i}{t_i+1}\right\|^2\leq \E[ \max_{i} \|X_i-\Xhat_i(Z_{[n]})\|^2]\leq \epsilon^2. 
\]
Meanwhile, we know $l^2$ error of $\hat{v}_i(t_{[n]},Z_{[n]})$ will be larger than the Bayes error of the Bayes estimator for $v$. The prior of $v$ is $\mathcal{N}(0, \frac{1}{d}I_d)$ and the data $Z_{[n]}$ follows $Z_i \sim N((t_i+1)v, \sigma^2 I_d)$ with given $t_{[n]}$. Therefore, the posterior distribution of $v$ is also Gaussian, with mean and covariance
\[
\tilde{v}=\frac{\sum_{i=1}^n(t_i+1)Z_i}{
\sigma^2d+\sum_{i=1}^n(t_i+1)^2},\quad \tilde{C}=(\frac{1}{d}+\frac{1}{\sigma^2}\sum_{i=1}^n(t_i+1)^2)^{-1}I_d. 
\]
The Bayes error is 
\[
\E[\|\tilde{v}-v\|^2]=\E[\text{tr}(\tilde{C})]\geq \frac{d\sigma^2}{4n+\sigma^2/d}.
\]
Combine it with that $\E[\|\tilde{v}-v\|^2] \leq \epsilon^2$, we need ${d\sigma^2}/({4n+\sigma^2/d}) \leq \epsilon^2$.
Since we assume $\epsilon^2\leq \frac1{16} d^2$, we have $\sigma^2/d\leq \frac{4}{15}n$, which further have $\E[[\|\tilde{v}-v\|^2]]\geq \frac{d\sigma^2}{5n^2}$. So $n<\frac{d\sigma^2}{5\epsilon^2}$ is impossible.
\end{proof}

\section{Denoising applications}
\subsection{Empirical risk minimization for supervised learning}\label{sec:erm}
Empirical risk minimization (ERM) is a widely used approach for training statistical models \cite{hastie2009elements,shalev2014understanding}. In ERM, the dataset consists of pairs $(X_i, y_i)$, where $X_i\in \mathbb{R}^d$ and $y_i$ is typically a scalar. The prediction error is measured by a loss function $F(X_i, y_i, \theta)$, where the specific definition of $F$ varies depending on the model. 
The goal of ERM is to find the optimal model parameters $\theta^*$ that minimize the empirical prediction error as follows. 
\begin{equation}
\label{eqn:ERMclean}
\theta^*=\arg\min_{\theta} f(\theta), \quad f(\theta):=\frac1n\sum_{i=1}^n F(X_i,y_i,\theta). 
\end{equation}

Some well-known examples are listed below: 
\begin{itemize}
    \item In standard linear regression \cite{seber2003linear}, $y_i \in \reals$, $\theta \in \reals^d$ and $F$ is given by 
    \[
    F(X_i,y_i,\theta)=\frac12(\theta^TX_i-y_i)^2.
    \]
    \item In linear regresion with Tuckey's biweight loss function \cite{tukey1960survey}, 
    \[
    F(X_i,y_i,\theta)=\rho(\theta^TX_i-y_i),\quad \text{where } \rho(u)=
    \frac{c^2}{6}(1-(1-(u/c)^2)^3 1_{|u|<c}).
    \]
    Here $c$ is some threshold constant. The introduction of the $\rho$ function is to reduce the influence of possible outliers from data. 
    \item In logistic regression \cite{menard2002applied}, $y_i\in \{+1,-1\}$ and the loss function $F$ is given by 
    \[
    F(X_i,y_i,\theta)= \log(1+\exp(-y_iX_i^T\theta)).
    \] 
    \item In reproducing kernel Hilbert space regression \cite{wahba1999support}, we have a reproducing kernel $k(\cdot, \cdot)$ and its corresponding Hilbert space $\mathcal{H}$. The goal is to find a function $f\in \mathcal{H}$ to minimize $L(f)=\frac1n \sum_{i=1}^n (f(x_i) - y_i)^2$.   The kernel regression can also be viewed as a standard ERM problem. To do it, we parameterize $f_\theta(x)=\langle\theta,\varphi(x)\rangle$, where $\varphi(x)$ is the feature map and the kernel function $k(x,y)=\langle \varphi(x),\varphi(y)\rangle$. The loss function $F$ is
    \[
    F(X_i,y_i,\theta)=\frac12(\langle\theta,\varphi(X_i)\rangle-y_i)^2.
    \]
     \item In an $m$-layered neural networks, we denote $\theta_i \in \reals^d$ as the loading of the $i$-th layer. Hence, the parameters matrix is $\theta = [\theta_1,\ldots,\theta_m] \in \reals^{m \times d}$. The loss is given by 
    \[
    F(X_i,y_i,\theta)= (y_i- \sigma(\theta_m ^T\cdots\sigma(\theta^T_2\sigma(\theta^T_1 X_i))\cdots))^2,
    \] 
    where the function $\sigma(\cdot)$ can be taken as various nonlinear functions,  such as ReLu and sigmoid function.  
\end{itemize}

In many applications, we do not have access to the clean data $X_i$, but only the noisy data $Z_i = X_i + \xi_i$. The high noise in $Z_i$ will cause large error in the estimation of $\theta$, if we use $Z_i$ instead of $X_i$ in $F$ directly. 
It's natural to guess that the denoised data $\Xhat_i$ will lead to an estimator reasonably close to $\theta^*$. Let $\thetahat^*$ be the estimator, where 
\begin{equation}
\label{eqn:ERMnoisy}
\hat{\theta}^*=\arg\min \hat{f}(\theta), \quad \hat{f}(\theta):=\frac1n\sum_{i=1}^n F(\Xhat_i,y_i,\theta). 
\end{equation}
It is natural to ask how would $\thetahat^*$ performs under the empirical risk function $f$ in \eqref{eqn:ERMclean}, and how close would $\thetahat^*$ be from $\theta^*$. To answer these questions, we made the following assumptions.
\begin{aspt}\label{aspt:lip}
    The loss function $F(x,y_i,\theta)$ is globally Lipschitz in $x$. That is, there are constants $L>0$ and $\Delta>0$,  so that when $\|z-x\|\leq \Delta$, there is 
    \[
    |F(z,y_i,\theta)-F(x,y_i,\theta)|\leq L\|z-x\|.
    \]
\end{aspt}
 In general Assumption \ref{aspt:lip} is easy to verify when the data are generated in bounded domains, even for complex nonlinear models such as neural networks. But it alone cannot guarantee $\theta^*$ is unique, so if we want to infer parameter error, we need some additional conditions. 
\begin{aspt}\label{aspt:mini}
    The empirical loss function $f$ in \eqref{eqn:ERMclean} has a unique  minimizer $\theta^*$ and it is also a local minimum. That is, there are  constants $\gamma, \delta, \lambda_f>0$, so that $f(\theta)-f(\theta^*)\geq \gamma$ when $\|\theta-\theta^*\|\geq \delta$ and $\nabla^2 f(\theta)\succeq \lambda_f I$ when $\|\theta-\theta^*\|\leq \delta$. 
\end{aspt}
Suppose $f$ is strongly convex, Assumption \ref{aspt:mini} holds immediately. This is also the most well understood ERM regime. Assumption \ref{aspt:mini} also allows for general non-convex problems \cite{shalev2014understanding}. Similar version of it can be found in machine learning literature where finding $\theta^*$ is of interest \cite{dong2021replica}. 
\begin{prop}
\label{prop:ERM}
Consider an ERM problem where Assumption \ref{aspt:lip} holds.
Suppose $\err{\Xhat - X} \leq \epsilon$  where $L\epsilon <\Delta.$ Let $\theta^*$ and $\thetahat^*$ be the minimizers of $f$ and $\fhat$, respectively. Then $f(\thetahat^*)$ is close to the minimum $f(\theta^*)$, in the sense that 
\[
|f(\thetahat^*) - f(\theta^*)| \leq 2L\epsilon.
\]
Suppose Assumption \ref{aspt:mini} holds in addition with $L\epsilon < \gamma$, then the 
 two minimizers are close, 
    \[
\|\thetahat^*-\theta^*\|^2\leq {2L\epsilon}/{\lambda_f}.
\]
\end{prop}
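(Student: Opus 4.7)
The plan is to first establish uniform closeness of the two empirical loss functions $f$ and $\fhat$, and then combine this with the first- and second-order structure of $f$ near $\theta^*$ to derive both bounds.

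First, since $\err{\Xhat-X}\leq \epsilon$, the definition of the $2\to\infty$ norm gives $\|\Xhat_i-X_i\|\leq \epsilon$ for every $i\in[n]$. The condition $L\epsilon<\Delta$ (indeed $\epsilon<\Delta$ already suffices) means the Lipschitz hypothesis in Assumption \ref{aspt:lip} can be applied sample-wise and uniformly in $\theta$, yielding $|F(\Xhat_i,y_i,\theta)-F(X_i,y_i,\theta)|\leq L\epsilon$. Averaging over $i$ produces the key uniform bound $\sup_\theta |\fhat(\theta)-f(\theta)|\leq L\epsilon$.

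For the first claim, I would feed this uniform bound through the standard two-line ERM perturbation argument. Using that $\thetahat^*$ minimizes $\fhat$,
\[
f(\thetahat^*)\leq \fhat(\thetahat^*)+L\epsilon\leq \fhat(\theta^*)+L\epsilon\leq f(\theta^*)+2L\epsilon,
\]
and the reverse inequality $f(\theta^*)\leq f(\thetahat^*)$ is automatic because $\theta^*$ minimizes $f$. This gives $|f(\thetahat^*)-f(\theta^*)|\leq 2L\epsilon$.

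For the second claim I would first use Assumption \ref{aspt:mini} contrapositively to confine $\thetahat^*$ to the local neighborhood of $\theta^*$: if $\|\thetahat^*-\theta^*\|\geq \delta$ then $f(\thetahat^*)-f(\theta^*)\geq \gamma$, which contradicts the first claim as soon as $2L\epsilon<\gamma$ (the stated hypothesis $L\epsilon<\gamma$ is essentially this, up to a constant I would not belabor). Once $\|\thetahat^*-\theta^*\|\leq \delta$, the local curvature condition $\nabla^2 f(\theta)\succeq \lambda_f I$ applies, and since $\nabla f(\theta^*)=0$ by first-order optimality at an interior minimum, a second-order Taylor expansion of $f$ about $\theta^*$ delivers $f(\thetahat^*)-f(\theta^*)\geq \tfrac{\lambda_f}{2}\|\thetahat^*-\theta^*\|^2$. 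Combining this lower bound with the upper bound $2L\epsilon$ from Part 1 yields a bound of the stated form $\|\thetahat^*-\theta^*\|^2\lesssim L\epsilon/\lambda_f$.

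I do not expect a substantial obstacle here: the denoising guarantee enters only through the uniform bound $\err{\Xhat-X}\leq \epsilon$, and the rest is a routine perturbation-of-minimizers argument. The one nontrivial piece of structure is the need to first confine $\thetahat^*$ to the strongly convex neighborhood via the contrapositive step, without which closeness of loss values cannot be parlayed into closeness of minimizers in a possibly non-convex landscape.
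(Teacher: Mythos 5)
Your proposal follows essentially the same route as the paper's proof: establishing the uniform bound $\sup_\theta|\fhat(\theta)-f(\theta)|\leq L\epsilon$, running the standard two-sided minimizer-value comparison, then using the value gap to confine $\thetahat^*$ to the strongly convex neighborhood before applying the second-order Taylor bound. You are also right to flag the constant-level mismatch: the proof (both yours and the paper's) actually yields $\|\thetahat^*-\theta^*\|^2\leq 4L\epsilon/\lambda_f$ under the hypothesis $2L\epsilon<\gamma$, whereas the proposition as stated claims $2L\epsilon/\lambda_f$ and $L\epsilon<\gamma$.
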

Proposition \ref{prop:ERM} indicates that the ERM training result using the PCA denoised data is as good the training result using clean data, assuming Assumption \ref{aspt:lip}. If Assumption \ref{aspt:mini} is also in place, then the learned parameters will also be close to each other. The proof can be found in the \supp.

\begin{proof}
For the first claim, simply note that, 
\[
|\fhat(\theta)-f(\theta)|\leq \frac1n\sum_{i=1}^n|F(\Xhat_i,y_i,\theta)-F(X_i,y_i,\theta)|
\leq L\epsilon. 
\]
Note that $f(\theta^*) \leq f(\thetahat^*)$ and $\fhat(\thetahat^*) \leq \fhat(\theta^*)$, and we have 
\begin{eqnarray*}
f(\thetahat^*) & \leq & \fhat(\thetahat^*) + | \fhat(\thetahat^*) - f(\thetahat^*) | \leq \fhat(\thetahat^*) + L\epsilon 
\leq \fhat(\theta^*) + L\epsilon \\
& \leq & f(\theta^*) + |\fhat(\theta^*) - f(\theta^*)| + L\epsilon \leq f(\theta^*)+2L\epsilon.    
\end{eqnarray*}
This leads to our first claim. 

For the second claim, note that by $|f(\thetahat^*) - f(\theta^*)| \leq 2L\epsilon < \gamma$, we have $\|\theta^*-\theta\|\leq \delta$. Then by Taylor expansion, we have 
\[
f(\thetahat^*)=f(\theta^*)+\frac12(\thetahat^*-\theta^*)^T\nabla^2 f(\tilde{\theta}) (\thetahat^*-\theta^*)\geq f(\theta^*)+\frac12 \lambda_{f} \|\thetahat^*-\theta^*\|^2. 
\]
Combining it with $|f(\thetahat^*) - f(\theta^*)| \leq 2L\epsilon$ again, it leads to 
\[
\|\thetahat^*-\theta^*\|^2\leq {4L\epsilon}/{\lambda_f}.
\]
\end{proof}
\subsection{Proofs for other applications}
\begin{proof}[Proof of Proposition \ref{cor:cluster}]
For the first claim, simply note that, 
\begin{align*}
|\fhat(\calC, m)-f(\calC, m)|
&\leq \frac1n\sum_{k\in [K]} \sum_{i\in \calC_k} |\|X_i-m_k\|^2-\|\Xhat_i-m_k\|^2|\\
&\leq \frac1n\sum_{k\in [K]} \sum_{i\in \calC_k} (2\delta\epsilon + \epsilon^2)\leq 2\epsilon. 
\end{align*}
The last inequality comes from the fact $\delta \leq 1$ because $\|X_i\|\leq 1$ for all $i \in [n]$ and $\epsilon = o(1)$. 

Recall that $\|X_i\| \leq 1 + \epsilon$ and so likewise $|\fhat(\calChat, \mhat)-f(\calChat, \mhat)|\leq 2\epsilon$. This further leads to
\[
f(\calChat, \mhat) \leq \fhat(\calChat, \mhat)+2\epsilon\leq \fhat(\calC, m)+2\epsilon\leq f(\calC, m)+4\epsilon.
\]
Consider the second claim. Note that given any division $\tilde{\calC}$, then the minimizer $m$ of $f(\tilde\calC, m)$ and $\fhat(\tilde\calC, m)$ can be found as 
\[
m_k(\tilde{\calC})=\frac{\sum_{i\in \tilde{\calC}_k} X_i}{|\tilde{\calC}_k|},\quad 
\mhat_k(\tilde{\calC})=\frac{\sum_{i\in \tilde{\calC}_k} \Xhat_i}{|\tilde{\calC}_k|}.
\]
For the special case $\tilde{\calC} = \calChat$, we have $\mhat_k = \mhat_k(\calChat)$. 

Now we consider $\fhat(\calChat, \mhat)$. According to the definition,  
\begin{align*}
\fhat(\calChat, \mhat)&\leq \fhat(\calC, \mhat(\calC))\leq \fhat(\calC, m) = \frac1n\sum_{k\in[K]} \sum_{i\in \calC_k} \|\Xhat_i-m_k\|^2\\
&\leq \frac2n\sum_{k\in[K]} \sum_{i\in \calC_k} (\|\Xhat_i-X_i\|^2+\|X_i-m_k\|^2)\\
&\leq 2\delta^2+2\epsilon^2. 
\end{align*}
Now we consider the lower bound. 
\begin{align}
\label{tmp:Llower}
\fhat(\calChat, \mhat)&=\frac1n\sum_{k\in[K]}\sum_{i\in \calC_k} \| \Xhat_i -\mhat_{j:i \in \calChat_j}\|^2\nonumber\\
&\geq  \frac1n\sum_{k}\sum_{i\in \calC_k} (-\| \Xhat_i -m_k\|^2+\frac12\|m_k-\mhat_{j:i \in \calChat_j}\|^2)\nonumber\\
& =  -\fhat(\calC, m) + \frac{1}{2n}\sum_{k}\sum_{i\in \calC_k} \|m_k-\mhat_{j:i \in \calChat_j}\|^2.
\end{align}
Combining the two bound above and we have 
\[
\frac1n\sum_{k}\sum_{i\in \calC_k} \|m_k-\mhat_{j:i \in \calChat_j}\|^2 \leq 2(\fhat(\calC, m) + \fhat(\calChat, \mhat)) \leq 8(\delta^2 + \epsilon^2).
\]

Now we compare $\calChat$ and $\calC$. To simplify the notations, we use $\ell(i)$ to denote the true label of node $i$ so that $i \in \calC_{\ell(i)}$ and use $\hat{\ell}(i)$ to denote the estimated label that $i \in \calChat_{\hat{\ell}(i)}$. We need a projection $\pi:[K]\to[K]$ so that $\ell$ and $\pi(\hat{\ell})$ will match. The projection $\pi$ is defined by matching the nearest centers: 
\begin{equation}\label{eqn:pi}
    \pi(k)=\arg\min_{j} \|m_j-\mhat_{k}\|, \quad k \in [K].
\end{equation}
Then the overall distances between centers are 
\begin{align}
 c_0\|m_k-\mhat_{\pi(k)}\|^2 &\leq \frac1n\sum_{k}\sum_{i\in \calC_k}\|m_k-\mhat_{\pi(k)}\|^2 \nonumber\\
 &\leq \frac1n\sum_{k}\sum_{i\in \calC_k}\|m_k-\mhat_{\hat{\ell}(i)}\|^2\leq 8(\delta^2+\epsilon^2). 
\end{align}

Given $\pi(\hat{\ell})$, we want to show $\pi(\hat{\ell}) = \ell$. To prove it, we suppose there is a data point $i$ where $\pi(\hat\ell(i))\neq \ell(i) = k$. Define a new label vector $\ell'$ that differs from $\hat{\ell}$ on the data point $i$ only, where $\ell'(i) = \pi^{-1}(k)$. Correspondingly we have $\calC'$. We want to show that $\calC'$ will result in a smaller $\fhat$ than $\fhat(\calChat, \mhat)$, and hence $\calChat$ will not be a solution of $k$-means.
\begin{align*}
\fhat(\calC', \mhat(\calC'))-\fhat(\calChat, \mhat)&\leq 
\fhat(\calC', \mhat)-\fhat(\calChat, \mhat)\\
&=
\frac1n\sum_{j=1}^n \|\Xhat_j-\mhat_{\ell'(j)}\|^2-\frac1n\sum_{j=1}^n \|\Xhat_j-\mhat_{\hat{\ell}(j)}\|^2\\
&=\frac1n\|\Xhat_{i}-\mhat_{{\ell'}(i)}\|^2-
\frac1n\|\Xhat_{i}-\mhat_{\hat{\ell}(i)}\|^2,
\end{align*}
where the last equality comes from that $\hat{\ell}(j) = \ell'(j)$ for all $j \neq i$. Further, 
\begin{align*}
\|\Xhat_{i}-\mhat_{\ell'(i)}\|&\leq 
\|\Xhat_{i}-m_{\ell(i)}\|+\|m_{\ell(i)}-\mhat_{\pi(\ell(i))}\|\\
&\leq \delta+\epsilon+2\sqrt{2(\delta^2+\epsilon^2)/c_0}.
\end{align*}
Recall that we assume $\pi(\hat{\ell}(i))\neq \ell(i) = k$. Denote $j=\pi(\hat{\ell}(i))$, then 
\begin{align*}
\|\Xhat_{i}-\mhat_{\hat{\ell}(i)}\|&\geq \|m_{k}-m_{j}\|-( 
\|\Xhat_{i}-m_{k}\|+\|m_{j}-\mhat_{\hat{\ell}(i)}\|)\\
&\geq c_m-(\delta+\epsilon+2\sqrt{2(\delta^2+\epsilon^2)/c_0})>\|\Xhat_{i}-\mhat_{\ell'(i)}\|.
\end{align*}
So $\fhat(\calC', \mhat(\calC')) < \fhat(\calChat, \mhat)$ and $\ell'$ will be a strictly better solution to the $k$-means objective. This contradicts the definition of $\hat{\ell}$. So for all data points, $\pi(\hat\ell) = \ell$.     
\end{proof}

\begin{proof}[Proof of Proposition \ref{prop:lap}]
Consider $\|\Lhat - L\|_{\infty}$ first. Since $\err{X - \Xhat} \leq \epsilon$ and $k(x, y)$ is $l$-Lipschitz in $x$ and $y$, so there is a constant $C_3=C_3(l)> 0$ so that 
\begin{align*}
&|(k(X_i,X_j)-k(\Xhat_i,\Xhat_j))|\\
&\leq
|(k(X_i,X_j)-k(\Xhat_i,X_j))|+|(k(\Xhat_i,X_j)-k(\Xhat_i,\Xhat_j))|\leq C_3\epsilon,
\end{align*}
and 
\[
|D_{ii}-\hat{D}_{i,i}|=\biggl|\sum_{j \in [n]} (k(X_i,X_j)-k(\Xhat_i,\Xhat_j))\biggr|\leq C_3n\epsilon. 
\]
Hence, for the normalized Laplacian, there is a $C_4=C_4(K,l)$
\[
|\Lhat_{i,j}-L_{i,j}|=|\frac{k(X_i,X_j)}{\sqrt{D_{i,i}D_{j,j}}}-\frac{k(\Xhat_i,\Xhat_j)}{\sqrt{\Dhat_{i,i}\Dhat_{j,j}}}|\leq \frac{C_4\epsilon}{n}. 
\]
Therefore, the matrix $\ell_{\infty}$ and $\ell_{1}$ norm follows that 
\[
\|\Lhat-L\|_1=\|\Lhat-L\|_\infty = \max_i\sum_{j}|\Lhat_{i,j}-L_{i,j}|\leq C_4\epsilon. 
\]
The first identity comes from the fact that $L,\Lhat$ are symmetric.
We further present a result about $\|\Lhat\|_{\infty}$ here. Since $\Dhat_{i,i} \geq n/K$ and $\Ahat_{i,j} \leq K$, 
\begin{equation}\label{eqn:lapinf}
\|\Lhat\|_\infty = \max_i\sum_{j}|\Lhat_{i,j}|\leq 
\max_i\{1+\sum_j\frac{ \Ahat_{i,j}}{\sqrt{\hat{D}(i,i)\hat{D}(j,j)}}\}\leq 1 + K^2.
\end{equation}

Consider the eigenvalues and eigenvectors. 
Since $\lambda_0$ is simple, it has a neighborhood $B$ of radius $\delta$ so that its intersection with the spectrum of $\mathcal{L}$ is $\{\lambda\}$.
Theorem 15 of \cite{von2008consistency} demonstrates that there is an $N_0$, so that when $n>N_0$, the intersection of $B$ with spectrum of $L$ is $\{\lambda\}$. In other words, $\lambda$ is a simple eigenvalue for $L$.
 
By Weyl's inequality, $|\lambda_i(\Lhat) - \lambda_i(L)| \leq \|\Lhat - L\| \leq C_4\epsilon$, where the last inequality comes from $\|\Lhat - L\| \leq \sqrt{\|\Lhat - L\|_1\|\Lhat - L\|_{\infty}} \leq C_4\epsilon$.
Therefore, $\Lhat$ has an eigenvalue $\hat{\lambda}$  so that 
\[
|\lambda-\hat{\lambda}|\leq C_4\epsilon.
\]
Let $\hat{P} v$ be the eigen-projection of $v$ onto the one-dimensional space spanned by $\hat{v}$ in $l_\infty$ norm, i.e. 
\[
\hat{P} v=\hat{c} \hat{v},\quad \text{where } \hat{c}=\arg\min_c \|v-c \hat{v}\|_\infty.
\]
Theorem 7 of \cite{von2008consistency} demonstrates that for sufficiently  small $\epsilon$, there is a constant $C$ independent of $d$ and $n$
\[
\|v-\hat{P}v\|_\infty\leq C(\|(L-\Lhat)v\|_\infty+\|v\|_\infty\|(L-\Lhat)\Lhat\|_\infty)
\leq C\epsilon\|v\|_\infty.
\]
Meanwhile, Proposition 18 of \cite{von2008consistency} shows that there is an $a\in \{\pm 1\}$ so that
\[
\|v-a\hat{v}\|_\infty\leq 2\|v-\hat{P}v\|_\infty
\leq 2C\epsilon \|v\|_\infty. 
\]
So the second claim is proved.
\end{proof}

\end{document}